\numberwithin{equation}{section}
\theoremstyle{plain}
\newtheorem{thm}{Theorem}[section]
\newtheorem{cor}[thm]{Corollary}
\newtheorem{lem}[thm]{Lemma}
\newtheorem{prop}[thm]{Proposition}
\theoremstyle{definition}
\newtheorem{defn}{Definition}[section]
\theoremstyle{remark}
\newtheorem{oss}{Remark}[section]
\def\R{\mathbb{R}}
\def\P{\mathbb{P}}
\def\C{\mathbb{C}}
\def\N{\mathbb{N}}
\def\NN{\mathcal{N}}
\def\NNN{\mathscr{N}}
\def\L{\mathscr{L} }
\def\A{\mathscr{A}}
\def\E{\mathbb{E}}
\def\to{\longrightarrow}
\def\sto{\rightarrow}
\def\1{\mathbf{1}}
\def\e{\varepsilon}
\DeclareMathOperator{\Cov}{Cov}
\DeclareMathOperator{\Var}{Var}
\def\area{\operatorname{area}}
\def\diam{\operatorname{diam}}
\newcommand{\vast}{\bBigg@{3.5}}
\newcommand{\Vast}{\bBigg@{5}}
\def\paref#1{(\ref{#1})}
\renewcommand{\(}{\left(}
\renewcommand{\)}{\right)}
\renewcommand{\[}{\left[}
\renewcommand{\]}{\right]}
\renewcommand{\{}{\left\lbrace}
\renewcommand{\}}{\right\rbrace}
\newcommand{\norm}[1]{\left\lVert#1\right\rVert}
\newcommand{\abs}[1]{\left\lvert#1\right\rvert}
\newcommand{\eps}{\varepsilon}
\def\endmproof{\hfill \mbox{\raggedright \rule{0.1in}{0.1in}}}
\begin{document}

\title{\sc Gaussian Random Measures \\ Generated by Berry's Nodal Sets}

\author{{\large Giovanni Peccati* and Anna Vidotto**}\\
$\ $\\
\small{*\emph{Unit\'e de Recherche en Math\'ematiques, Universit\'e du Luxembourg}}\\ 
\small{**\emph{Dipartimento di Matematica, ``Tor Vergata" Universit\`a di Roma }}}  
\date{\small \today}

\maketitle

\begin{abstract}  

We consider vectors of random variables, obtained by restricting the length of the nodal set of Berry's random wave model to a finite collection of (possibly overlapping) smooth compact subsets of $\R^2$. Our main result shows that, as the energy diverges to infinity and after an adequate normalisation, these random elements converge in distribution to a Gaussian vector, whose covariance structure reproduces that of a homogeneous independently scattered random measure. A by-product of our analysis is that, when restricted to rectangles, the dominant chaotic projection of the nodal length field weakly converges to a standard Wiener sheet, in the Banach space of real-valued continuous mappings over a fixed compact set. An analogous study is performed for complex-valued random waves, in which case the nodal set is a locally finite collection of random points.

\medskip

\noindent {\sc Keywords}: Random plane waves; Gaussian random measures; Weak convergence; Wiener sheet; Bessel functions.

\medskip

\noindent {\sc AMS 2010 Classification}: 60G60, 60F05, 34L20, 33C10. 

\end{abstract}



\allowdisplaybreaks
  
\section{Introduction}

The aim of this paper is to prove second order results for sequences of random vectors obtained by restricting the nodal length of {\bf Berry's random wave model} to finite collections of (possibly intersecting) smooth compact subsets of $\R^2$. Such a model was first introduced in \cite{Be:77}, and typically emerges as the local scaling limit of random fields on Riemannian surfaces that are approximately eigenfunction of the associated Laplace-Beltrami operator -- see e.g. \cite{Ze:09, CH:16}, as well as Section \ref{ss:introapp} below. Berry's model has been recently the object of a an intense study, mainly in connection with the high-frequency analysis of local and non-local geometric quantities associated with the nodal sets of smooth random fields --- see e.g. \cite{CH:16, BCW:17, KW:18} and the references therein.

\smallskip

Our main finding is that, in the high-energy limit, the above mentioned random vectors verify a multivariate central limit theorem (CLT), with a limiting covariance matrix reproducing the dependence structure of a homogeneous independently scattered random measure. Such a result extends the one-dimensional CLT recently proved in \cite{NPR:19} (see also \cite{Be:02} for a seminal contribution). An analogous analysis will be also realised for complex-valued random waves, whose nodal set is almost surely a locally finite collection of points --- see again \cite{Be:02, NPR:19}.

\smallskip

The contributions of the present paper are part of a growing body of research (see e.g. \cite{MPRW:16, CMW:16, CMW:16b, CM:19b, CM:18b, PR:18,  To:18b, To:18, DNPR:19, RW:18, Ca:19, BMW:19, MRW:19}) focussing on second order results for local quantities associated with nodal sets of Gaussian random waves, deduced by using tools of Gaussian analysis, in particular variational and Wiener chaos techniques. See \cite{Be:02, RW:08, ORW:08, Wi:10, KKW:13} for a sample of earlier fundamental contributions on variance estimates and related quantities.

\smallskip

\noindent{\bf Some conventions.} In what follows, every random object is defined on a common probability space $(\Omega, \mathscr{F}, \P)$, with $\E$ indicating mathematical expectation with respect to $\P$. The symbol $\Longrightarrow$ stands for convergence in distribution of random vectors (note that such a notation is silent on the dimension of the underlying objects). Given two positive sequences $\{a_n\}$, $\{b_n\}$, we write $a_n \sim b_n$ whenever $a_n/b_n\sto 1$, as $n\sto \infty$. When no further specification is provided, the lowercase letter $c$ is used to denote an absolute finite and positive constant, whose exact value might change from line to line.

\smallskip

\noindent{\bf Plan.} The paper is organized as follows: in Section \ref{ss:overview} we introduce the model and the main objects of our study, in Section \ref{ss:intromain} we present our main results, and in Section \ref{ss:introapp} we discuss some applications to monochromatic and superposed waves. Section \ref{proofs} contains the proofs while Appendix \ref{appA} contains some ancillary results.


\section{The model}\label{ss:overview}

For $E>0$, the  {\bf real-valued Berry's random wave model} \cite{Be:77, Be:02, NPR:19} with {\bf energy} $4\pi^2 E$, written as $$B_E = \left\lbrace B_E(x) : x\in \R^2\right\rbrace,$$ is defined as the centered Gaussian field on $\R^2$ having covariance kernel
\begin{equation}\label{covE}
r^E(x, y) = r^E(x-y) := J_0(2\pi \sqrt E \norm{x - y}), \,\,\, x,y\in \R^2\,.
\end{equation}
where $J_0$ indicates the Bessel function of the first kind with order $\alpha = 0$, namely
\begin{equation}\label{seriesJ0}
J_0(u) = \sum_{m=0}^{+\infty} \frac{(-1)^m}{(m!)^2}\left (\frac{u}{2}\right)^{2m},\qquad u\in \R.
\end{equation} 
Note that formula \eqref{covE} immediately yields that $B_E$ is {\bf isotropic}, that is: the distribution of $B_E$ is invariant with respect to rigid motions of the plane. It is a standard fact that $J_0$ is the unique radial solution to the equation 
\begin{equation}\label{e:1helmholtz}
\Delta f + f= 0
\end{equation}
verifying $f(0)=1$; here, $\Delta:=\partial^2/\partial x_1^2 + \partial^2/\partial x_2^2$ denotes as usual the Laplace operator.   

\smallskip

It is known (see e.g. \cite{NPR:19}) that $B_E$ can be represented as a random series 
\begin{equation}\label{serie}
B_E(x) = B_E(r,\theta)=\Re \left( \sum_{m=-\infty}^{+\infty} a_m J_{|m|}(2\pi \sqrt{E} r)\e^{im\theta} \right),
\end{equation}
where we have used polar coordinates $(r,\theta)=x$, $\Re(s)$ denotes the real part $s$, the set $\{a_m\}$ is a collection of i.i.d. complex Gaussian random variables such that $\E[a_m]=0$ and $\E[|a_m|^2]=2$, and $J_\alpha$ indicates the Bessel function of the first kind of index $\alpha$. The series \paref{serie} is almost surely  convergent, and moreover uniformly convergent on any compact set, and the sum is a real analytic function -- see again \cite{NPR:19} and the references therein. From the representation \paref{serie} one also infers  that $B_E$ is almost surely  an {\bf eigenfunction} of $\Delta$ with eigenvalue $-4\pi^2 E$, i.e.: with probabilitly 1, the random mapping $x\mapsto B_E(x)$ solves the Helmholtz equation
$$
\Delta B_E(x) + 4\pi^2 E \cdot B_E(x) = 0, \quad x\in \R^2.
$$
We will also consider a complex version of $B_E$ (referred to as the {\bf complex-valued Berry's random wave model} with energy $4\pi^2E$). Such a field is defined as
\begin{equation}\label{e:complexberry}
B^{\mathbb C}_E(x) := B_E(x) + i \widehat{B}_E(x), \quad x\in \R^2,
\end{equation}
 where $\widehat{B}_E$ is an independent copy of $B_E$. One easily checks that $B^{\mathbb C}_E$ almost surely  verifies the equation $\Delta B^{\mathbb C}_E +4\pi^2 E \cdot B^{\mathbb C}_E=0$. 
 
 \smallskip
 \begin{oss}\label{r:not}{\rm 
 In order to make more explicit the connection with \cite{Be:02, CH:16}, for $k>0$ we will sometimes use the special notation $b_k$ and $b^{\mathbb C}_k$, respectively, to indicate the fields $B_E(x)$ and $B^{\mathbb C}_E(x)$ in the special case $E = k^2/(4\pi^2)$. In particular, $b_k$ and $b^{\mathbb C}_k$ are isotropic Gaussian solutions of the equation $\Delta f+k^2 f = 0$.}
 \end{oss}
 \smallskip

The principal focus of our analysis are the two {\bf nodal sets}
$$
B_E^{-1}(0) := \lbrace x\in \R^2 : B_E(x) = 0\rbrace \,\, \mbox{ and } \,\,  (B^{\mathbb C}_E)^{-1}(0) =
B_E^{-1}(0)\cap (\widehat{B}_E)^{-1}(0)\,.
$$
It is proved in \cite[Lemma 8.3]{NPR:19} that $B_E^{-1}(0)$ is almost surely  a union of disjoint rectifiable curves (called {\bf nodal lines}), while $(B^{\mathbb C}_E)^{-1}(0)$ is almost surely  a locally finite collection of isolated points (often referred to as {\bf phase singularities} or {\bf optical vortices}, see e.g. \cite{DOP:16, UR:13}). 

\smallskip 

Now denote by $\mathscr{A} $ the collection of all piecewise $C^1$ simply connected compact subsets of $\R^2$ having non-empty interior, that is: $D\in \mathscr{A}$ if and only if $D$ is a simply connected compact set with non-empty interior, and with a piecewise $C^1$ boundary. A direct adaptation of \cite[Lemma 8.3]{NPR:19} (that only deals with convex bodies with $C^1$ boundary, but the generalisation is straightforward, since the only element used in the proof is the piecewise smoothness of boundaries) shows that, if $D\in \mathscr{A}$ is fixed, then almost surely  $B_E^{-1}(0)$ intersects $\partial D$ in at most a finite number of points, whereas the intersection $(B^{\mathbb C}_E)^{-1}(0)\cap \partial D$ is almost surely  empty. We will also denote by $\mathscr{A}_0 \subset \mathscr{A}$ the family of convex bodies of $\R^2$ having a $C^1$ boundary, that is: $D\in \mathscr{A}_0$ if and only if $D$ is a convex compact set, having non-empty interior and a $C^1$ boundary. For $D\in \mathscr{A}$, we set 
\begin{flalign}
&\L_{E}(D):=\text{length}(B_E^{-1}(0)\cap D)\,,\\
&\NNN_{E}(D):= \# \, \{ \(B_E^{\mathbb C}\)^{-1}(0)  \cap D  \} \,. \label{rv_2}
\end{flalign}
As shown in the next section, the main goal of the present paper is to study the weak convergence of the set-indexed random fields
\begin{equation}\label{processes}
\{\L_{E}(D):\, D\in \A\}\quad \text{and} \quad \{\NNN_{E}(D):\, D\in \A\}\,,
\end{equation}
in the sense of finite-dimensional distributions.


\section{Main results}\label{ss:intromain}


\subsection{Multivariate CLTs}

The following statement contains fundamental results from \cite{Be:02} (the mean and variance computations in \eqref{meanlength}--\eqref{e:varlength}) and from \cite{NPR:19} (the one-dimensional CLTs stated in \eqref{e:1clt}). 

\begin{thm}[See \cite{Be:02, NPR:19}]\label{NPR-MR}
Let the above notation prevail and fix $D\in \mathscr{A}_0$. For $E>0$, the expectation of the nodal length $ \L_E(D)$ and of the number of phase singularities $\NNN_E(D)$ are, respectively, 
\begin{equation}\label{meanlength}
\E[ \L_E(D)] = \area(D)\,\frac{\pi}{\sqrt{2}}\sqrt{E} \quad\mbox{and} \quad  \E[\NNN_E(D)] = \area(D)\,\pi E\,,
\end{equation}
whereas the correspoinding variances verify the asymptotic relations
\begin{equation}\label{e:varlength}
\Var(\L_E(D)) \sim \area(D)\,\frac{1}{512\pi}\log E, \qquad \Var(\NNN_E(D)) \sim \area(D)\,\frac{11}{32\pi}E \log E, \quad E\to\infty\,. 
\end{equation}
Now let 
$$
\widetilde{\L}_{E}(D):=\frac{\L_{E}(D)-\E(\L_{E}(D))}{\sqrt{\Var(\L_{E}(D)})} \quad \text{and} \quad \widetilde{\NNN}_{E}(D):=\frac{\NNN_{E}(D)-\E(\NNN_{E}(D))}{\sqrt{\Var(\NNN_{E}(D)})}\,.
$$
Then, as $E\to \infty$, one has that
\begin{equation}\label{e:1clt}
\widetilde{\L}_{E}(D) \,,\,\,\widetilde{\NNN}_{E}(D) \Longrightarrow N\,,
\end{equation}
where $N\sim \NN(0,1)$ is a standard Gaussian random variable. 
\end{thm}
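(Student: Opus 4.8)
The plan is to obtain all three families of assertions --- the means, the variance asymptotics, and the one-dimensional CLTs --- by combining the Kac--Rice formulae with the Wiener--It\^o chaos expansions of $\L_E(D)$ and $\NNN_E(D)$. I would first compute the expectations via Kac--Rice. Representing the nodal length formally as
\begin{equation*}
\L_E(D) = \int_D \norm{\nabla B_E(x)}\,\delta_0(B_E(x))\,dx
\end{equation*}
and taking expectations yields $\E[\L_E(D)] = \int_D p_{B_E(x)}(0)\,\E\big[\norm{\nabla B_E(x)}\mid B_E(x)=0\big]\,dx$. Since $r^E(0)=J_0(0)=1$ the field has unit variance, so $p_{B_E(x)}(0)=(2\pi)^{-1/2}$; differentiating \eqref{covE} twice shows that $B_E(x)$ and $\nabla B_E(x)$ are independent with $\nabla B_E(x)\sim\NN(0,2\pi^2 E\cdot I_2)$, whence $\E\norm{\nabla B_E(x)}=\pi^{3/2}\sqrt E$ and the first identity in \eqref{meanlength} follows after integrating the constant density over $D$. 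The count $\NNN_E(D)$ is treated by the Kac--Rice formula for the zeros of the Gaussian map $B^{\C}_E\colon\R^2\to\R^2$: the value at $x$ has density $(2\pi)^{-1}$ at the origin and is independent of the Jacobian $\nabla B^{\C}_E(x)$, whose entries are i.i.d.\ $\NN(0,2\pi^2 E)$, so that $\E\abs{\det\nabla B^{\C}_E(x)}=2\pi^2 E$ and the expected density equals $\pi E$.

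The heart of the matter is the chaos expansion
\begin{equation*}
\L_E(D)-\E[\L_E(D)] = \sum_{q\geq 1}\L_E(D)[2q], \qquad \NNN_E(D)-\E[\NNN_E(D)] = \sum_{q\geq 1}\NNN_E(D)[2q],
\end{equation*}
in which only even chaoses appear, since both functionals are invariant under $B_E\mapsto -B_E$ while $B_E\eqd -B_E$; here $\L_E(D)[2q]$ denotes the projection onto the $2q$-th Wiener chaos generated by $B_E$. The crucial point to establish is \textbf{Berry's cancellation}: the contribution of the second chaos to the variance is of order strictly smaller than $\log E$, so that the fourth chaos is dominant. The variance of $\L_E(D)[4]$ is an explicit double integral over $D\times D$ of a kernel built from $r^E$ and its first and second derivatives; inserting the large-argument asymptotics of $J_0$ and of $J_0',J_0''$ converts the near-diagonal behaviour of this kernel into a genuine logarithmic divergence, and a careful evaluation of the constant produces $\area(D)\,\frac{1}{512\pi}\log E$. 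The analogous but heavier computation for $\NNN_E(D)[4]$, where one expands $\abs{\det\nabla B^{\C}_E}$ in chaoses, gives $\area(D)\,\frac{11}{32\pi}E\log E$.

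For the limit laws I would reduce everything to the dominant chaos. By the variance analysis, the $L^2$-distance between $\widetilde\L_E(D)$ and the normalised projection $\L_E(D)[4]/\sqrt{\Var(\L_E(D))}$ tends to $0$, so it suffices to prove asymptotic normality of the latter. This is exactly the setting of the fourth moment theorem of Nualart--Peccati: since $\L_E(D)[4]$ lives in a fixed chaos, a CLT holds as soon as its normalised fourth cumulant vanishes, i.e.\ $\E\big[\L_E(D)[4]^4\big]\big/\Var(\L_E(D)[4])^2\to 3$. This fourth moment is once more an integral of products of $r^E$ and its derivatives, and the Bessel asymptotics show it is negligible against the square of the variance, yielding \eqref{e:1clt} for $\widetilde\L_E(D)$; the same scheme, applied to the dominant chaos of $\NNN_E(D)$, gives the CLT for $\widetilde\NNN_E(D)$.

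The delicate part --- and the real content of \cite{Be:02, NPR:19} --- is the asymptotic analysis of the chaotic kernels, governed by the oscillatory and only slowly decaying large-argument behaviour of $J_0(2\pi\sqrt E\norm{x-y})$. Proving Berry's cancellation in the second chaos (so that the true order of the variance is logarithmic rather than the naive power of $E$) and extracting the exact constants requires a precise handling of the non-absolutely-integrable tails of these Bessel kernels, which is substantially more demanding than the elementary mean computation.
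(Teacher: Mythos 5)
Your overall route --- Kac--Rice for the means, Wiener--It\^o expansion, Berry's cancellation in the second chaos, and the Nualart--Peccati fourth moment theorem applied to the dominant fourth-chaos projection --- is exactly the strategy behind the cited proof of this theorem in \cite{NPR:19} (summarised in Appendix A of this paper via Lemma \ref{2_chaos_var} and Propositions \ref{propvar4L}--\ref{propvar4N}), and your mean computations are correct: $p_{B_E(x)}(0)=(2\pi)^{-1/2}$, $\E\norm{\nabla B_E(x)}=\pi^{3/2}\sqrt E$, and $\E\abs{\det\nabla B_E^{\C}(x)}=2\pi^2E$ do give \eqref{meanlength}. However, there is a genuine gap at the pivotal step. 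You write that the second chaos has variance $o(\log E)$, ``\emph{so that} the fourth chaos is dominant'', and later that ``by the variance analysis'' the $L^2$-distance between $\widetilde{\L}_E(D)$ and the normalised fourth-chaos projection tends to $0$. This inference is invalid as stated: the expansion $\L_E(D)-\E[\L_E(D)]=\sum_{q\geq 1}\L_E^{[2q]}(D)$ contains \emph{all} even chaoses, and neither the variance asymptotics \eqref{e:varlength} nor the reduction of the CLT to the fourth chaos follows until one also proves
\begin{equation*}
\sum_{q\geq 3}\Var\big(\L_E^{[2q]}(D)\big)=o(\log E),
\qquad
\sum_{q\geq 3}\Var\big(\NNN_E^{[2q]}(D)\big)=o(E\log E).
\end{equation*}
Equivalently, one needs $\Var(\L_E(D))\sim\Var(\L_E^{[4]}(D))$, which is precisely the fact (quoted after Proposition \ref{propvar4N}, see \cite[Equation~(2.29)]{NPR:19}) that justifies $\widetilde{\L}_E(D)=\L_E^{[4]}(D)/\sqrt{\Var(\L_E^{[4]}(D))}+o_{\P}(1)$. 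Your proposal never addresses the chaoses of order six and higher.

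This tail bound is not a soft consequence of anything you wrote. Each projection $\L_E^{[2q]}(D)$ has variance given by integrals of products of $2q$ Bessel-type kernels; a scaling argument shows each such term is $O(1)$, but with constants (from the diagram formula and the Hermite coefficients) that depend on $q$, so summability in $q$, uniformly in $E$, requires a separate argument. Moreover, unlike the arithmetic random wave case \cite{MPRW:16}, where the total variance was already known from \cite{KKW:13} and could simply be compared with the low-order chaoses, for Berry's model no rigorous total-variance asymptotics predating \cite{NPR:19} exists (Berry's derivation in \cite{Be:02} is heuristic), so one cannot shortcut this step by quoting \eqref{e:varlength} --- that is part of what is being proved. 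Controlling the higher-order chaoses is a substantial portion of the proof in \cite{NPR:19}; once it is in place, the rest of your plan (second-chaos cancellation as in Lemma \ref{2_chaos_var}, fourth-chaos variance as in Propositions \ref{propvar4L}--\ref{propvar4N}, and the fourth moment theorem applied to the single fourth-chaos element $\L_E^{[4]}(D)$, resp.\ $\NNN_E^{[4]}(D)$) does yield \eqref{e:1clt} exactly as in the cited works.
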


\begin{oss}{\rm We will see below that one of our technical findings (namely, the forthcoming Proposition \ref{prop1}), allows one to extend the content of Theorem \ref{NPR-MR} to the larger class $\mathscr{A}$.
}
\end{oss}

The key tool in the proof of Theorem \ref{NPR-MR} is an explicit computation of the {\bf Wiener-It\^o chaos expansion} of the two quantities $\widetilde{\L}_{E}(D)$ and $\widetilde{\NNN}_{E}(D)$ (see \cite[Chapter 2]{NP:12}, as well as Appendix A below). Such an approach reveals that, in the high-energy limit $E\sto\infty$, the fluctuations of $\widetilde{\L}_{E}(D)$ and $\widetilde{\NNN}_{E}(D)$ are completely determined by their projections on the fourth {\bf Wiener chaos} generated, respectively, by $B_E$ and $B_E^{\mathbb C}$. This observation provides a complete explanation of some striking {\bf cancellation phenomena} for nodal length variances observed by Berry \cite{Be:02}, and then confirmed in \cite{Wi:10} and \cite{KKW:13} for the models of random spherical harmonics and arithmetic random waves. The first paper connecting cancellation phenomena (for the variance of nodal lengths of random waves) to Wiener chaos expansions is \cite{MPRW:16}, dealing with the arithmetic case. Further studies in this direction for related models can be found in \cite{BM:17, RW:18, PR:18, To:18b, DNPR:19, MRW:19, BMW:19, CM:19, Ca:19}. We will see below that Wiener chaos expansions play an equally fundamental role in our findings.

\smallskip

Although Theorem \ref{NPR-MR} applies to generic elements of $\A_0$, it does not provide any information about the asymptotic dependence structure of random vectors of the type $(\widetilde{\L}_{E}(D_1),\dots,\widetilde{\L}_{E}(D_m))$ or $(\widetilde{\NNN}_{E}(D_1),\dots,\widetilde{\NNN}_{E}(D_m))$.  The next statement fills such a gap by providing a non-trivial multivariate extension of Theorem \ref{NPR-MR}; it is the main result of the paper.

\medskip

\begin{thm}[\bf Multivariate CLT for nodal lengths and phase singularities] \label{thm_L}
For $m\geq 1$, fix $D_1,D_2,\dots,D_m\in\A$, and define the $m\times m$ matrix $C = \{C_{i,j} \}$ by the relation
\begin{equation}\label{lim_cov_ij}
C_{i,j}:=\frac{\area\(D_i\cap D_j\)}{\sqrt{\area\(D_i\)\, \area\(D_j\)}}\,.
\end{equation}
Then, as $E\to \infty$, one has that
\begin{equation}\label{e:mainlength}
\(\widetilde{\L}_{E}(D_1),\widetilde{\L}_{E}(D_2),\dots,\widetilde{\L}_{E}(D_m)\) {\implies}N (0,C)\,,
\end{equation}
and  
\begin{equation}\label{e:mainnumber}
\(\widetilde{\NNN}_{E}(D_1),\widetilde{\NNN}_{E}(D_2),\dots,\widetilde{\NNN}_{E}(D_m)\){\implies}N(0,C)\,,
\end{equation}
where $N(0,C)$ indicates an $m$-dimensional centered Gaussian vector with covariance $C$.
\end{thm}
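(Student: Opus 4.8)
The plan is to combine the Wiener--It\^o chaos decomposition of the normalised functionals with the multivariate version of the fourth moment theorem (the Peccati--Tudor theorem; see \cite{NP:12}). From the proof of Theorem \ref{NPR-MR}, together with its extension to the class $\A$ furnished by Proposition \ref{prop1}, each variable $\widetilde{\L}_E(D_i)$ agrees in the high-energy limit, up to an $L^2(\P)$-negligible remainder, with its projection $\widetilde{\L}_E(D_i)[4]$ onto the fourth Wiener chaos generated by $B_E$; that is, $\|\widetilde{\L}_E(D_i) - \widetilde{\L}_E(D_i)[4]\|_{L^2(\P)}\to 0$ as $E\to\infty$. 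Consequently the vector in \eqref{e:mainlength} has the same limiting distribution as the vector $(\widetilde{\L}_E(D_1)[4],\dots,\widetilde{\L}_E(D_m)[4])$ of fourth-chaos projections, and it suffices to prove a CLT for the latter. Since all these components live in a single fixed Wiener chaos, and since each already satisfies a one-dimensional CLT by Theorem \ref{NPR-MR} and Proposition \ref{prop1}, the Peccati--Tudor theorem reduces the entire problem to showing that the covariance matrix of the vector converges to $C$.

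The core of the argument is therefore the asymptotic evaluation of the covariances. The key step is to establish that, for all $i,j$,
\begin{equation}\label{e:crosscov}
\Cov\(\L_E(D_i),\L_E(D_j)\) \sim \frac{1}{512\pi}\,\area(D_i\cap D_j)\,\log E, \qquad E\to\infty.
\end{equation}
The dominant fourth-chaos part of $\L_E(D)$ may be written as a spatial integral over $D$ of a fixed stationary functional of $B_E$ and its first-order derivatives, so the left-hand side of \eqref{e:crosscov} becomes a double integral over $D_i\times D_j$ of the associated covariance kernel. Because the underlying correlation $J_0(2\pi\sqrt E\,\|x-y\|)$ decorrelates on the scale $\|x-y\|\sim E^{-1/2}$, only pairs $(x,y)$ within this scale contribute to the leading order, and such pairs concentrate along the diagonal inside the common region $D_i\cap D_j$. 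Performing the singularity analysis of the Bessel kernel near the diagonal exactly as in the variance computation of \cite{Be:02, NPR:19}, but integrating over the overlap in place of a single domain, yields \eqref{e:crosscov}.

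Once \eqref{e:crosscov} is in hand the conclusion is immediate: dividing by $\sqrt{\Var(\L_E(D_i))\,\Var(\L_E(D_j))}$ and invoking the variance asymptotics $\Var(\L_E(D))\sim \frac{1}{512\pi}\area(D)\log E$ recorded in \eqref{e:varlength} gives $\Cov(\widetilde{\L}_E(D_i),\widetilde{\L}_E(D_j))\to C_{i,j}$, with $C_{i,j}$ as in \eqref{lim_cov_ij}. Hence the covariance matrix converges to $C$ and the Peccati--Tudor theorem yields \eqref{e:mainlength}. The proof of \eqref{e:mainnumber} follows the very same scheme applied to the complex field $B_E^{\mathbb C}$ and the phase-singularity counts $\NNN_E(D_i)$, now using $\Var(\NNN_E(D))\sim\frac{11}{32\pi}\area(D)\,E\log E$ from \eqref{e:varlength} together with the corresponding fourth-chaos representation.

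The main obstacle is the cross-covariance asymptotics \eqref{e:crosscov}. While the on-diagonal singularity of the Bessel kernel responsible for the $\log E$ divergence is already present in the one-dimensional variance analysis, one must now show that contributions coming from points lying in only one of the two sets, or on opposite sides of the boundary of the overlap $D_i\cap D_j$, are of lower order and do not perturb the leading coefficient. Controlling these boundary effects uniformly for the general piecewise-$C^1$ sets of $\A$, rather than only for the smooth convex bodies of $\A_0$, is the most delicate point, and is exactly where the extension provided by Proposition \ref{prop1} enters.
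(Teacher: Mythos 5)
Your proposal is correct and follows essentially the same route as the paper: reduction to the fourth Wiener chaos via the $L^2$-negligibility of the remainder, application of the Peccati--Tudor multivariate fourth moment theorem so that only convergence of covariances must be checked, and evaluation of the cross-covariances through the near-diagonal singularity analysis of the Bessel kernels over the overlap $D_i\cap D_j$ --- which is precisely the content of Propositions \ref{prop1} and \ref{prop2}, whose role (including the handling of boundary effects for piecewise-$C^1$ sets) you identify correctly. The only cosmetic difference is that the paper applies Peccati--Tudor to the vector of individual Hermite-integral components $a_{i,E}(D_l)$ (and $b_{i,E}(D_l)$ in the complex case) and computes each pairwise rate explicitly, whereas you apply it directly to the normalised fourth-chaos projections; both are valid and lead to the same computations.
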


Theorem \ref{thm_L} implies in particular that, if $D_1\cap D_2 = \emptyset$, then the two random variables $\widetilde{\L}_{E}(D_1)$ and $\widetilde{\L}_{E}(D_2)$ (resp. $\widetilde{\NNN}_{E}(D_1)$ and $\widetilde{\NNN}_{E}(D_2)$) are asymptotically independent. Relations \eqref{e:mainlength} and \eqref{e:mainnumber} also contain a generalisation of \eqref{e:varlength}, that we present in the next statement.

\begin{cor} For every $D_1, D_2 \in \A$,
$$
\frac{ \Cov(\L_E(D_1), \L_E(D_2))}{ (\log E)/ (512\pi)},\quad \frac{ \Cov(\NNN_E(D_1), \NNN_E(D_2))}{(11 E \log E) /(32\pi)} \longrightarrow\area(D_1\cap D_2),
$$
as $E\to\infty$.
\end{cor}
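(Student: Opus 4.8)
The plan is to read the statement off the main Theorem \ref{thm_L} by \emph{un-normalising} the limiting covariance, the only additional input being the variance asymptotics of Theorem \ref{NPR-MR}. I start from the elementary identity
\begin{equation*}
\Cov(\L_E(D_1), \L_E(D_2)) = \Cov\big(\widetilde{\L}_E(D_1), \widetilde{\L}_E(D_2)\big)\cdot \sqrt{\Var(\L_E(D_1))\,\Var(\L_E(D_2))},
\end{equation*}
valid because $\widetilde{\L}_E(D_i)$ is the centred and variance-normalised version of $\L_E(D_i)$, together with its exact analogue for $\NNN_E$. By the remark following Theorem \ref{NPR-MR}, the asymptotics \eqref{e:varlength} extend from $\mathscr{A}_0$ to all of $\mathscr{A}$ (through Proposition \ref{prop1}), so that $\sqrt{\Var(\L_E(D_1))\Var(\L_E(D_2))} \sim \sqrt{\area(D_1)\area(D_2)}\,(\log E)/(512\pi)$ and, likewise, $\sqrt{\Var(\NNN_E(D_1))\Var(\NNN_E(D_2))} \sim \sqrt{\area(D_1)\area(D_2)}\,(11 E\log E)/(32\pi)$. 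The whole matter thus reduces to establishing that the normalised covariances converge to the off-diagonal entry of $C$, namely $\Cov(\widetilde{\L}_E(D_1),\widetilde{\L}_E(D_2)) \to C_{1,2}$ and $\Cov(\widetilde{\NNN}_E(D_1),\widetilde{\NNN}_E(D_2)) \to C_{1,2}$.

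This convergence of second moments is exactly what the proof of Theorem \ref{thm_L} delivers, and it is cleaner to extract it directly than to attempt to upgrade the weak convergence in \eqref{e:mainlength}. Writing $\widetilde{\L}_E(D_i) = \widetilde{\L}_E(D_i)[4] + R_E(D_i)$, where $\widetilde{\L}_E(D_i)[4]$ is the normalised projection onto the fourth Wiener chaos and $R_E(D_i)$ collects the remaining chaoses, the $L^2$-dominance of the fourth chaos underlying Theorem \ref{NPR-MR} gives $\|R_E(D_i)\|_{L^2} \to 0$. Since distinct Wiener chaoses are orthogonal in $L^2$, the cross terms vanish and
\begin{equation*}
\Cov\big(\widetilde{\L}_E(D_1),\widetilde{\L}_E(D_2)\big) = \Cov\big(\widetilde{\L}_E(D_1)[4],\widetilde{\L}_E(D_2)[4]\big) + \Cov\big(R_E(D_1), R_E(D_2)\big).
\end{equation*}
The last term is $o(1)$ by Cauchy--Schwarz, while the first term converges to $C_{1,2}$: this is precisely the covariance computation carried out in the proof of Theorem \ref{thm_L} in order to identify the limiting matrix $C$ through the multivariate fourth moment theorem. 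The same reasoning applies verbatim to $\widetilde{\NNN}_E$.

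It remains to combine the two ingredients. Substituting the limit $\Cov(\widetilde{\L}_E(D_1),\widetilde{\L}_E(D_2)) \to C_{1,2}$ into the identity above and recalling that $C_{1,2} = \area(D_1\cap D_2)/\sqrt{\area(D_1)\area(D_2)}$, I obtain
\begin{equation*}
\Cov(\L_E(D_1),\L_E(D_2)) \sim C_{1,2}\,\sqrt{\area(D_1)\area(D_2)}\,\frac{\log E}{512\pi} = \area(D_1\cap D_2)\,\frac{\log E}{512\pi},
\end{equation*}
so that dividing by $(\log E)/(512\pi)$ gives the first assertion, the factor $\sqrt{\area(D_1)\area(D_2)}$ in the normalisation cancelling against the denominator of $C_{1,2}$. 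The phase-singularity case is identical, with $(\log E)/(512\pi)$ replaced by $(11 E\log E)/(32\pi)$. The only point requiring care is the $L^2$-dominance of the fourth chaos, which is however already established in the analysis leading to Theorems \ref{NPR-MR} and \ref{thm_L}; granting it, the corollary is a direct consequence.
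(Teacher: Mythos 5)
Your proposal is correct and follows essentially the same route as the paper: the corollary is stated there as a direct by-product of the covariance computations inside the proof of Theorem \ref{thm_L}, where $\Cov(\widetilde{\L}_E(D_1),\widetilde{\L}_E(D_2))$ is shown (via the asymptotics of $\Cov(a_{i,E}(D_1),a_{j,E}(D_2))$, the $L^2$-dominance of the fourth chaos, and the variance asymptotics extended to $\mathscr{A}$ through Proposition \ref{prop1} and Remark \ref{r:papeete}) to converge to $C_{1,2}$, after which un-normalising gives the claim. Your explicit justification of the passage from fourth-chaos covariances to full covariances via chaos orthogonality and Cauchy--Schwarz is exactly the argument implicit in the paper's ``$\sim$'' step, correctly filling in why weak convergence alone would not suffice.
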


\begin{oss}\label{r:meas}{\rm Let $\mathscr{B}_0$ denote the class of Borel subsets of $\R^2$ having finite Lebesgue measure, and observe that $\A\subset \mathscr{B}_0$. Following e.g. \cite[Chapter 2]{NP:12}, we define a homogeneous {\bf independently scattered Gaussian random measure} on $\R^2$, to be a centered Gaussian family
$$
{\bf G}  = \{G(C) : C\in \mathscr{B}_0\},
$$
verifying the following relation: for every $C_1,C_2\in \mathscr{B}_0$, $\E[G(C_1)G(C_2)] = \area(C_1\cap C_2)$ (a self-contained proof of the existence of such an object can be found in \cite[p.24]{NP:12}). In view of such a definition, the content of Theorem \ref{thm_L} can be reformulated in the following way: as $E\sto\infty$, the two set-indexed processes
$$
\sqrt{\frac{512\pi}{ \log E}}\{\L_{E}(D) - \E(\L_E(D)) :\, D\in \A\}\quad 
$$ 
and
$$  
\sqrt{\frac{32\pi}{11\, E \log E}} \{\NNN_{E}(D) - \E(\NNN_E(D)) :\, D\in \A\}\,
$$
converge to the restriction of ${\bf G}$ to $\mathscr{A}$ in the sense of finite-dimensional distributions. Whether such a convergence takes in place in a stronger functional sense (see e.g. \cite{BR:85}) is an open problem, whose complete solution seems to be still outside the scope of existing techniques. The next section contains some further discussion in this direction.}
\end{oss}


\subsection{A (partial) weak convergence result}

We recall that a {\bf Wiener sheet} on $[0,1]^2$ is a centered Gaussian field $$ {\bf W} = \{W(t_1,t_2) : t_1, t_2\geq0\},$$ such that $\E[W(t_1,t_2)W(s_1,s_2)] = (t_1\wedge s_1 )(t_2\wedge s_2)$ and the mapping $(t_1,t_2) \mapsto W(t_1,t_2)$ is almost surely  continuous (see e.g. \cite[p. 39]{RY:99} for an introduction to such an object). Now consider the two centered random fields on $[0,1]^2$ given by
$$
X_E(t_1,t_2) := \sqrt{\frac{512\,\pi}{\log E}}\,\Big(\L_E([0,t_1]\times [0,t_2]) - \E(\L_E([0,t_1]\times [0,t_2]))\Big)
$$
and
$$
Y_E(t_1,t_2) := \sqrt{\frac{32\,\pi}{E \log E}}\,\Big(\NNN_E([0,t_1]\times [0,t_2]) - \E(\NNN_E([0,t_1]\times [0,t_2]) )\Big).
$$
Both $X_E$ and $Y_E$ belong almost surely  to the Skorohod space ${\bf D}_2$ of `cadlag' functions on $[0,1]^2$, as defined e.g. in the classical reference \cite{Ne:71}. One immediate consequence of Theorem \ref{thm_L} (and Remark \ref{r:meas}) is that, as $E\sto \infty$, both $X_E$ and $Y_E$ converge to ${\bf W}$ in the sense of finite-dimensional distributions, and a natural question is whether such a convergence can be lifted to {\bf weak convergence} in the metric space ${\bf D}_2$ (see again \cite{Ne:71}). Proving such a functional result would typically allow one to deduce a number of novel limit theorems (involving e.g. the global and local maxima and minima of $X_E$ and $Y_E$),  as a consequence of the well-known {Continuous Mapping Theorem} (see e.g. \cite{B:99}).  Similarly to what is observed at the end of Remark \ref{r:meas}, a complete solution to this problem seems to require novel ideas. A first step in this direction is contained in the next statement. From now on, we will denote by $C([0,1]^2)$ the space of continuous real-valued functions on $[0,1]^2$, that we endow with the metric induced by the supremum norm.

\begin{thm}\label{tightness}
For every $t_1,t_2\in [0,1]^2$ and every $E>0$, define $X_E^{[4]}(t_1,t_2)$ (resp. $Y_E^{[4]}(t_1,t_2)$) to be the projection of $X_E(t_1,t_2)$ (resp. $Y_E(t_1,t_2)$) onto the fourth Wiener chaos generated by $B_E$ (resp. $B_E^{\mathbb C}$). Then, for every fixed $(t_1,t_2)$, $\E[(X_E^{[4]}(t_1,t_2) - X_E(t_1,t_2))^2]\sto 0$ and $\E[(Y_E^{[4]}(t_1,t_2) - Y_E(t_1,t_2))^2]\sto 0$, as $E\sto \infty$.  Moreover, the random mappings $(t_1,t_2)\mapsto X_E^{[4]}(t_1,t_2)$ and $(t_1,t_2)\mapsto X_E^{[4]}(t_1,t_2)$ belong almost surely  to the space $C([0,1]^2)$ and, as $E\sto\infty$, both $X_E^{[4]}$ and $Y_E^{[4]}$ converge weakly to ${\bf W}$, that is: for every continuous bounded mapping $\varphi : C([0,1]^2) \sto \R$,
$$
\E\big[\varphi\big(X_E^{[4]}\big)\big], \, \E\big[\varphi\big (Y_E^{[4]}\big )\big ] \longrightarrow \E\big[\varphi\big({\bf W}\big)\big]. 
$$
\end{thm}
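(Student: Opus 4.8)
The plan is to prove the three assertions in turn, reading the weak-convergence statement as the conjunction of (i) convergence of finite-dimensional distributions and (ii) tightness in $C([0,1]^2)$. I will describe everything for $X_E^{[4]}$; the argument for $Y_E^{[4]}$ is identical once the fourth-chaos kernel of the nodal-length measure is replaced by its phase-singularity counterpart, and the normalisation $(\log E)/(512\pi)$ by $(11\,E\log E)/(32\pi)$.

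The $L^2$-approximation $\E[(X_E^{[4]}(t_1,t_2)-X_E(t_1,t_2))^2]\sto 0$ is a direct consequence of the chaos-domination phenomenon underlying Theorem \ref{NPR-MR}: for a fixed rectangle $D=[0,t_1]\times[0,t_2]\in\A$, the variance $\Var(\L_E(D))$ equals, to leading order, $\area(D)\,(\log E)/(512\pi)$, and this is entirely carried by the fourth-chaos projection, the other chaoses contributing a $o(\log E)$ term. Multiplying by the normalisation $512\pi/\log E$ and using orthogonality of the Wiener chaoses gives the claim. Granting this, $X_E^{[4]}$ and $X_E$ have the same limiting finite-dimensional distributions, which Theorem \ref{thm_L} (applied to rectangles via Proposition \ref{prop1}) identifies: for $D_i=[0,t_1^{(i)}]\times[0,t_2^{(i)}]$ one has $\area(D_i\cap D_j)=(t_1^{(i)}\wedge t_1^{(j)})(t_2^{(i)}\wedge t_2^{(j)})$, which is exactly the covariance of the Wiener sheet ${\bf W}$. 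Hence the finite-dimensional distributions of $X_E^{[4]}$ converge to those of ${\bf W}$.

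For the continuity and tightness claims I would record the representation $X_E^{[4]}(t_1,t_2)=\sqrt{512\pi/\log E}\int_0^{t_1}\!\!\int_0^{t_2}h_E(x)\,dx$, where $h_E$ is the (stationary, by isotropy) fourth-chaos density of the nodal-length measure, so that rectangular increments of the field are integrals of $h_E$ over thin strips and L-shaped regions. The key structural simplification is that, since $X_E^{[4]}$ lives in a \emph{fixed} Wiener chaos, Nelson's hypercontractivity makes all $L^p$ norms equivalent to the $L^2$ norm; thus every increment moment is controlled by its variance, for each $p\ge 2$,
$$
\E\big[\big|X_E^{[4]}(t)-X_E^{[4]}(s)\big|^{p}\big]\le c_p\,\big(\E\big[\big(X_E^{[4]}(t)-X_E^{[4]}(s)\big)^2\big]\big)^{p/2}.
$$
For fixed $E$ the variance of such an increment is at most a constant $c_E$ times the area of the symmetric difference of the two rectangles, hence $\le c_E(|t_1-s_1|+|t_2-s_2|)$; taking $p>4$ and invoking the Kolmogorov--Chentsov criterion on $[0,1]^2$ yields almost sure continuity of $(t_1,t_2)\mapsto X_E^{[4]}(t_1,t_2)$, which settles the second assertion.

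Tightness in $C([0,1]^2)$ reduces, again by hypercontractivity and Kolmogorov--Chentsov, to a single \emph{uniform-in-$E$} estimate of the form
$$
\sup_{E\ge E_0}\ \E\big[\big(X_E^{[4]}(t)-X_E^{[4]}(s)\big)^2\big]\ \le\ c\,\big(|t_1-s_1|+|t_2-s_2|\big)^{\alpha}
$$
for some $\alpha>0$, and this is where the real work lies and what I expect to be the main obstacle. Writing $K_E(x-y)=\Cov(h_E(x),h_E(y))$ for the stationary fourth-chaos covariance kernel, the left-hand variance equals $\frac{512\pi}{\log E}\iint_{R}K_E(x-y)\,dx\,dy$, the double integral running over an L-shaped region $R$ of area $\asymp|t_1-s_1|+|t_2-s_2|$. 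The delicate point is that $K_E$ has correlation length of order $E^{-1/2}$ and, through the large-argument decay of the relevant Bessel functions, behaves like $(E|x-y|^2)^{-1}$ at intermediate scales — precisely the slow decay responsible for the $\log E$ in the denominator. One must therefore split $R$ according to whether its width $\delta$ is large or small compared with $E^{-1/2}$: in the regime $\delta\gtrsim E^{-1/2}$ the integral is $\asymp\delta\,(\log E)/(512\pi)$, giving the desired bound after normalisation, whereas in the regime $\delta\lesssim E^{-1/2}$ one decomposes $R$ into roughly independent correlation cells and bounds the contribution of each via $K_E(0)\asymp E\log E$, again arriving at a bound of order $\delta$. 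Carrying both regimes through uniformly in $E$ rests on the same Bessel-integral estimates that underpin the variance asymptotics \eqref{e:varlength}. Once this uniform increment bound is secured, tightness follows, and combining it with the finite-dimensional convergence established above yields the asserted weak convergence of $X_E^{[4]}$ (and, \emph{mutatis mutandis}, of $Y_E^{[4]}$) to ${\bf W}$.
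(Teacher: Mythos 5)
Your proposal is correct and follows essentially the same route as the paper: finite-dimensional convergence is inherited from Theorem \ref{thm_L}, the $L^2$-reduction to the fourth chaos comes from the chaos-domination behind Theorem \ref{NPR-MR}, and tightness is reduced, via hypercontractivity of a fixed Wiener chaos, to a uniform-in-$E$ Kolmogorov--Chentsov bound whose core is an increment-variance estimate of order $\norm{t-s}$ (the paper takes moment order $6$, exponent $2+1$; your ``$p>4$'' is the same choice). The only substantive difference is how that variance estimate is established: where you propose a two-regime split according to whether the strip width exceeds $E^{-1/2}$, with a decomposition into correlation cells in the narrow regime, the paper (Remark \ref{oss-tight}, adapting Propositions \ref{prop1} and \ref{prop2}) gets the bound
$\int_{D_{t,s}^2} r^E(x-y)^4\,dx\,dy \le c\,\area(D_{t,s})\,\frac{\log E}{E}$
in one stroke from the coarea formula and the pointwise Bessel asymptotics \eqref{asymp_zero} and \eqref{asymp_infinity}, with no case analysis on the width --- the pointwise radial bound makes the estimate insensitive to the shape of $D_{t,s}$, so your cell decomposition is unnecessary (though not wrong). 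One minor slip in your heuristic: the fourth-chaos covariance kernel at the origin is of order $E$, not $E\log E$; the $\log E$ arises only from integrating the $\abs{z}^{-2}$ tail over scales between $E^{-1/2}$ and $\diam(D_{t,s})$, but this does not affect your conclusion. You also correctly handle (implicitly, via additivity over L-shaped regions) the non-nested case that the paper treats in Remark \ref{WLOG}.
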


Theorem \ref{tightness} is proved in Section \ref{ss:prooftightness}.


\section{Application to monochromatic and superposed waves}\label{ss:introapp}

We will now show that the main results of our paper allow one to deduce multivariate CLTs for (a) pullback random waves defined on general 2-dimensional manifolds, and (2) non-Gaussian waves obtained as the superposition of independent trigonometric waves with random directions and phases.


\subsection{Monochromatic waves}

Let $(\mathcal M, g)$ be a 2-dimensional compact smooth Riemannian manifold. We denote by $\Delta_g$ the Laplace-Beltrami operator on $\mathcal M$, and write $\lbrace f_j : j\in \N \rbrace$ to indicate an orthonormal basis of $L^2(\mathcal M)$, composed of real eigenfunctions of $\Delta_g$ such that
$$
\Delta_g f_j + \lambda^2_j f_j=0,
$$
where the eigenvalues are implicitly ordered in such a way that $0=\lambda_0<\lambda_1\le \lambda_2 \le \dots \uparrow \infty$. Following e.g. \cite{CH:16, Ze:09}, the {\bf monochromatic random wave} on $\mathcal M$ of parameter $\lambda$ is defined as the  random field
\begin{equation}\label{phi}
\phi_\lambda(x) := \frac{1}{\sqrt{\text{dim}(H_{c,\lambda})}} \sum_{\lambda_j\in [\lambda, \lambda + 1]} a_j f_j(x), \quad x\in M,
\end{equation}
where the $a_j$ are i.i.d. standard Gaussian and
$$
H_{\lambda} := \bigoplus_{\lambda_j \in [\lambda, \lambda + 1]} \text{Ker}(\Delta_g + \lambda_j^2\, \text{Id}),
$$
with $\text{Id}$ the identity operator. The field $\phi_\lambda$ is of course centered and Gaussian, and its covariance kernel is given by 
\begin{equation}\label{covRiem}
K_{\lambda}(x,y) := \Cov(\phi_\lambda(x), \phi_\lambda(y)) = \frac{1}{\text{dim}(H_{\lambda})} \sum_{\lambda_j\in [\lambda, \lambda +1]} f_j(x) f_j(y),\quad x,y\in \mathcal M.
\end{equation}
``Short window'' monochromatic random waves such as $\phi_\lambda$ (for manifolds of any dimension) were first introduced by Zelditch in \cite{Ze:09} as approximate models of random Laplace eigenfunctions on manifolds that do not necessarily possess spectral multiplicities; see \cite{CH:16, SW:16, NS:16, BW:18, CS:19, NPR:19} for further references and details. 

\smallskip

Following \cite{CH:16}, we now fix $x\in \mathcal M$, and consider the tangent plane $T_x\mathcal M \simeq \R^2$ to the manifold at $x$. We define the {\bf pullback random wave} generated by $\phi_\lambda$ at $x$ to be the Gaussian random field on $T_x\mathcal M$ given by 
$$
\phi_\lambda^x(u) := \phi_\lambda\left ( \exp_x \left ( \frac{u}{\lambda} \right )\right ),\qquad u\in T_x \mathcal M, 
$$
where $\exp_x : T_x\mathcal M \to \mathcal M$ is the exponential map at $x$. The planar field $\phi_\lambda^x$ is of course centered, and Gaussian and its covariance kernel is
$$
K_{\lambda}^x(u,v) = K_{\lambda}\left(\exp_x \left ( \frac{u}{\lambda}  \right) , \exp_x \left ( \frac{v}{\lambda}\right ) \right ),\qquad u,v\in T_x \mathcal M.  
$$ 

\begin{defn}[See \cite{CH:16}]{\rm We say that $x \in \mathcal{M}$ is a point of {\bf isotropic scaling} if, for every positive function $\lambda \mapsto r(\lambda)$ such that $r(\lambda) = o(\lambda)$, one has that
\begin{equation}\label{limit}
\sup_{u,v \in \mathbb{B}(r(\lambda))  } \left| \partial^\alpha\partial^\beta[  K_{\lambda}^x(u,v) - (2\pi) J_0(\| u-v\|_{g_x})]\right| \to 0,\quad \lambda\to \infty,
\end{equation}
where $\alpha, \beta\in \mathbb{N}^2$ are multi-indices classifying partial derivatives with respect to $u$ and $v$, respectively, $\| \cdot \|_{g_x}$ is the norm on $T_x\mathcal M$ induced by $g$, and $\mathbb{B}(r(\lambda))$ is the ball of radius $r(\lambda)$ containing the origin.
}
\end{defn}

Sufficient conditions for a point $x$ to be of isotropic scaling are presented e.g. in \cite[Section 2.5]{CH:16}, and the references therein. We observe that it is always possible to choose coordinates around $x$ in such a way that $g_x = \text{Id}$, and in this case the limiting kernel in \paref{limit} coincides with the covariance of the Gaussian field $\sqrt{2\pi}\cdot  b_1$, as defined in Remark \ref{r:not}. It follows that, if $x$ is a point of isotropic scaling and $g_x$ has been chosen as above, then, as $\lambda \sto \infty$, the planar field $\phi_\lambda^x$ converges $\sqrt{2\pi}\cdot b_1$, in the sense of finite-dimensional distributions. 

\smallskip

Keeping the above notation and assumptions, we now state a special case of \cite[Theorem 1]{CH:16}. For this, we will need the following notation: for $x\in \mathcal{M}$, and $D\subset T_x \mathcal M$,
$$
\mathcal{Z}_{\lambda, E}^x(D) := {\rm length} \big\lbrace (\phi_\lambda^x)^{-1}(0)\cap 2\pi\sqrt{E} \cdot D \big\rbrace, \quad E>0,
$$
where, for $c>0$, $c\cdot D:= \{y : cx, \mbox{ for some } x\in D\}$. The next statement shows that, if $x$ is of isotropic scaling, then, for $\lambda$ sufficiently large, vectors of random variables of the type $\mathcal{Z}_{\lambda, E}^x(D)$ behave like the corresponding vectors of nodal lengths for Berry's random waves.

\begin{thm}[Special case of Theorem 1 in \cite{CH:16}]\label{t:ch}Let $x$ be a point  of isotropic scaling, and assume that coordinates have been chosen around $x$ in such a way that $g_x = {\rm Id.}$ Fix $E>0$, as well as balls closed balls $B_1,...,B_m $. Then, as $\lambda \sto \infty$,  the random vector $(\mathcal{Z}_{\lambda, E}^x(B_1),...,\mathcal{Z}_{\lambda, E}^x(B_m))$ converges in distribution to 
\begin{eqnarray*}
&& \left( {\rm length}\Big(b_1^{-1}(0)\cap  2\pi\sqrt{E}\cdot B_1 \Big),...., {\rm length}\Big(b_1^{-1}(0)\cap  2\pi\sqrt{E}\cdot B_m \Big) \right)\\
&& \left ( \stackrel{d}{=} 2\pi\sqrt{E} \cdot \left(\mathscr{L}_E(B_1),\dots, \mathscr{L}_E(B_m) \right) \right ),
\end{eqnarray*}
where the identity in distribution stated between brackets follows from the fact that, as random fields, $B_E(x)$ and $b_1(2\pi\sqrt{E} x)$ have the same law.

\end{thm}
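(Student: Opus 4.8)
The plan is to obtain Theorem \ref{t:ch} as a direct consequence of the local universality principle of \cite{CH:16}, combined with an elementary scaling identity for Berry's field. The starting point is the observation that the isotropic scaling condition \eqref{limit}, specialised to $g_x = {\rm Id}$, is precisely a statement of $C^\infty$-convergence of covariance kernels: on every ball $\mathbb{B}(r(\lambda))$ with $r(\lambda) = o(\lambda)$, the kernel $K_\lambda^x$ and all its partial derivatives converge uniformly to those of $(2\pi)J_0(\|u-v\|)$, the covariance of $\sqrt{2\pi}\cdot b_1$ (recall Remark \ref{r:not}). Since $E>0$ and the balls $B_1,\dots,B_m$ are fixed, each compact set $2\pi\sqrt{E}\cdot B_j$ is contained in $\mathbb{B}(r(\lambda))$ as soon as $\lambda$ is large (take e.g.\ $r(\lambda)=\sqrt{\lambda}$). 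Hence, on the fixed region where the nodal lengths are computed, $\phi_\lambda^x$ converges to $\sqrt{2\pi}\cdot b_1$ in the strongest local sense, namely jointly with all its derivatives; this joint convergence of the field together with its gradient is the ingredient that matters for a first-order geometric functional such as nodal length.

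The first substantial step is then to invoke \cite[Theorem 1]{CH:16}, which guarantees that local linear statistics of the nodal set --- in particular the nodal length restricted to a fixed compact set --- converge in distribution under this mode of convergence. Applied to the vector of nodal lengths, this yields that $(\mathcal{Z}_{\lambda,E}^x(B_1),\dots,\mathcal{Z}_{\lambda,E}^x(B_m))$ converges in distribution to the corresponding vector built from $\sqrt{2\pi}\cdot b_1$; since multiplication by the positive constant $\sqrt{2\pi}$ leaves the zero set unchanged, the limit is exactly $\big({\rm length}(b_1^{-1}(0)\cap 2\pi\sqrt{E}\cdot B_j)\big)_{j=1}^m$. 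The joint (rather than merely marginal) convergence is automatic, because the whole field $\phi_\lambda^x$ converges and the $m$ functionals are read off from the single limiting field $b_1$.

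It remains to verify the bracketed identity in distribution. By Remark \ref{r:not}, $b_1 = B_{1/(4\pi^2)}$ has covariance $J_0(\|u-v\|)$; hence the rescaled field $u\mapsto b_1(2\pi\sqrt{E}\,u)$ has covariance $J_0(2\pi\sqrt{E}\,\|u-v\|) = r^E(u-v)$, which is exactly the covariance \eqref{covE} of $B_E$. Thus $b_1(2\pi\sqrt{E}\,\cdot\,)$ and $B_E$ coincide in law as random fields, so their nodal sets are related by the dilation $u\mapsto 2\pi\sqrt{E}\,u$: the set $b_1^{-1}(0)\cap(2\pi\sqrt{E}\cdot B_j)$ is, in distribution, the image of $B_E^{-1}(0)\cap B_j$ under this dilation. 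Since one-dimensional Hausdorff measure scales by the linear factor $2\pi\sqrt{E}$, we obtain ${\rm length}(b_1^{-1}(0)\cap 2\pi\sqrt{E}\cdot B_j) \stackrel{d}{=} 2\pi\sqrt{E}\,\mathscr{L}_E(B_j)$, jointly in $j=1,\dots,m$.

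The genuinely hard analytic content --- and the reason the statement is quoted from \cite{CH:16} rather than proved from scratch --- lies in the continuity of the nodal-length functional under $C^1$-convergence of fields. Nodal length is not continuous in any naive topology: one must exclude degenerate (tangential) intersections with the fixed boundaries $\partial(2\pi\sqrt{E}\cdot B_j)$ and control the Jacobian $\|\nabla\phi_\lambda^x\|$ near the zero set, typically via a coarea/Kac--Rice representation together with uniform integrability estimates valid along the sequence. In our setting these difficulties are resolved by the non-degeneracy of the limiting Berry field --- the pair $(b_1(u),\nabla b_1(u))$ has a non-degenerate Gaussian law and $b_1^{-1}(0)$ meets any fixed $C^1$ curve in at most finitely many points almost surely --- so that the circles $\partial(2\pi\sqrt{E}\cdot B_j)$ carry no nodal length. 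The only remaining verification is that the hypotheses of \cite[Theorem 1]{CH:16} hold, namely that $x$ is a point of isotropic scaling and that the test sets are fixed compact sets whose boundaries are almost surely not charged by the limiting nodal measure; both are guaranteed by the standing assumptions and by the just-mentioned non-degeneracy of $b_1$.
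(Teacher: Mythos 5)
Your proposal is correct and takes essentially the same approach as the paper, which likewise offers no independent proof of the convergence step: it is quoted directly from \cite[Theorem 1]{CH:16}, and the bracketed identity is justified by exactly the scaling argument you give, namely that $b_1(2\pi\sqrt{E}\,\cdot\,)$ has covariance $J_0(2\pi\sqrt{E}\,\|u-v\|)=r^E(u-v)$ and hence the same law as $B_E$, so that nodal lengths transform under the dilation by the linear factor $2\pi\sqrt{E}$. Your extra discussion of $C^1$-convergence, non-degeneracy, and boundary intersections correctly identifies what \cite{CH:16} actually supplies, but it adds nothing beyond the citation that the paper itself relies on.
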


The next statement (whose simple proof -- analogous to the one of \cite[Theorem 1.8]{NPR:19} -- is omitted for the sake of brevity) is a direct consequence of Theorem \ref{thm_L} and Theorem \ref{t:ch}, and provides both a second-order counterpart to Theorem \ref{t:ch} and a multivariate extension of \cite[Theorem 1.8]{NPR:19}. This shows in particular that nodal lengths of pullback random waves display multivariate high-energy Gaussian fluctuations reproducing the ones of Berry's model, at every point of isotropic scaling. We use the shorthand notation:
$$
\widetilde{\mathcal{Z}}_{\lambda, E}^x(D) :=  \frac{\mathcal{Z}_{\lambda, E}^x(D)}{2\pi\sqrt{E}}. 
$$

\begin{thm}[CLT for the nodal length of pullback waves]\label{t:rrw} Let $x$ be a point  of isotropic scaling, and assume that coordinates have been chosen around $x$ in such a way that $g_x = {\rm Id.}$ Fix closed balls $
B_1,...,B_m $, and let $\{E_k : k\geq 1\}$ be a sequence of positive numbers such that $E_k \to \infty$. Then, there exists a sequence $\{\lambda_k : k\geq 1\}$ such that, as $k\to \infty$, the vector
$$
\( \frac{\widetilde{\mathcal{Z}}_{\lambda_k, E_k}^x(B_1) - \area(B_1) \pi\sqrt{E_k/2}}{\sqrt{\area( B_1 )\log E_k/ (512\pi)   }}, ..., \frac{\widetilde{\mathcal{Z}}_{\lambda_k, E_k}^x(B_m) - \area(B_m) \pi\sqrt{E_k/2}}{\sqrt{\area( B_m )\log E_k/ (512\pi)   }} \)
$$
converges in distribution to a centered $m$-dimensional Gaussian vector with the same covariance matrix $C$ defined in Theorem \ref{thm_L} for $B_i = D_i$, $i=1,...,m$.
\end{thm}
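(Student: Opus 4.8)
The plan is to combine the two limit theorems already at our disposal by means of a standard diagonal (subsequence extraction) argument. The two ingredients are: (i) the multivariate CLT of Theorem \ref{thm_L}, which --- applied to the balls $B_1,\dots,B_m\in \mathscr{A}_0\subset \mathscr{A}$ --- ensures that the standardised Berry nodal lengths $(\widetilde{\L}_{E}(B_1),\dots,\widetilde{\L}_{E}(B_m))$ converge to $N(0,C)$ as $E\to\infty$; and (ii) the convergence of pullback nodal lengths of Theorem \ref{t:ch}, which, for each fixed $E$, gives $\widetilde{\mathcal{Z}}_{\lambda,E}^x(B_i)\Longrightarrow \L_E(B_i)$ (jointly in $i$) as $\lambda\to\infty$, since $\widetilde{\mathcal{Z}}_{\lambda,E}^x = \mathcal{Z}_{\lambda,E}^x/(2\pi\sqrt{E})$ and $\mathcal{Z}_{\lambda,E}^x\Longrightarrow 2\pi\sqrt{E}\cdot\L_E$. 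The essential difficulty is that these two statements involve limits in different variables taken in a specific order --- $\lambda\to\infty$ at fixed $E$ in (ii), versus $E\to\infty$ in (i) --- so they cannot be superimposed directly; the role of the sequence $\{\lambda_k\}$ is precisely to interpolate between them.

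First I would fix the normalisation appearing in the statement and, for $E,\lambda>0$, introduce the two random vectors
\begin{align*}
\Phi_{\lambda,E} &:= \left( \frac{\widetilde{\mathcal{Z}}_{\lambda,E}^x(B_i) - \area(B_i)\,\pi\sqrt{E/2}}{\sqrt{\area(B_i)\log E/(512\pi)}} \right)_{i=1}^m, \\
\Psi_{E} &:= \left( \frac{\L_E(B_i) - \area(B_i)\,\pi\sqrt{E/2}}{\sqrt{\area(B_i)\log E/(512\pi)}} \right)_{i=1}^m.
\end{align*}
For each fixed $E$, the map sending a vector to its image under this (deterministic, affine) normalisation is continuous, so Theorem \ref{t:ch} together with the continuous mapping theorem yields $\Phi_{\lambda,E}\Longrightarrow \Psi_E$ as $\lambda\to\infty$. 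On the other hand, recalling from \eqref{meanlength} that $\area(B_i)\,\pi\sqrt{E/2}=\E[\L_E(B_i)]$, the $i$-th component of $\Psi_E$ equals $\widetilde{\L}_E(B_i)$ multiplied by the deterministic factor $\sqrt{\Var(\L_E(B_i))\big/\big(\area(B_i)\log E/(512\pi)\big)}$, which tends to $1$ as $E\to\infty$ by the variance asymptotics \eqref{e:varlength}. Hence, by Theorem \ref{thm_L} together with Slutsky's lemma, $\Psi_{E}\Longrightarrow N(0,C)$ as $E\to\infty$.

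To conclude, I would fix any metric $d$ metrising convergence in distribution on $\R^m$ (e.g. the L\'evy--Prokhorov or a bounded-Lipschitz metric). Given the prescribed sequence $E_k\to\infty$, the convergence $\Phi_{\lambda,E_k}\Longrightarrow \Psi_{E_k}$ (as $\lambda\to\infty$) allows one to choose, for every $k$, a value $\lambda_k$ so large that $d\big(\mathrm{law}(\Phi_{\lambda_k,E_k}),\mathrm{law}(\Psi_{E_k})\big)<1/k$. Since $d\big(\mathrm{law}(\Psi_{E_k}),N(0,C)\big)\to 0$ by the previous paragraph, the triangle inequality gives $d\big(\mathrm{law}(\Phi_{\lambda_k,E_k}),N(0,C)\big)\to 0$, that is $\Phi_{\lambda_k,E_k}\Longrightarrow N(0,C)$, which is exactly the assertion. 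The only point requiring care --- and the main (if mild) obstacle --- is the interchange of the two limits just discussed: the diagonal extraction circumvents it but, as a consequence, yields the existence of a suitable $\{\lambda_k\}$ rather than convergence for every diverging sequence, consistently with the way the statement is phrased.
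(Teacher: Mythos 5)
Your proposal is correct and takes essentially the same route as the paper, which omits the proof precisely because it is this ``direct consequence'' of Theorem \ref{thm_L} and Theorem \ref{t:ch}, obtained by the same diagonal extraction used in the proof of Theorem 1.8 of \cite{NPR:19}. The details you supply --- normalising via the continuous mapping theorem, replacing the variance by its asymptotic equivalent through Slutsky's lemma, and choosing $\lambda_k$ via a metric for weak convergence --- are exactly the steps the authors leave to the reader.
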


As for \cite[Theorem 1.8]{NPR:19},  a shortcoming of the previous statement is that it does not provide any quantitative information about the sequence $\{\lambda_k\}$. As already observed in \cite[Section 1.4.3]{NPR:19}, in order to obtain a more precise statement, one would need some explicit estimate on the speed of convergence to zero of the supremum appearing in \eqref{limit}. Obtaining such estimates is a rather challenging problem; see \cite{K:19} for some recent advances.


\subsection{Superposition of trigonometric random waves}

In the already discussed paper \cite{Be:02}, Berry proposed a simple random model for the statistics of nodal lines of Laplace eigenfunctions defined on chaotic quantum billiards. In particular, in \cite{Be:02} it is conjectured that the zero set of deterministic wavefunctions with wavenumber $k$, for highly excited chaotic states $k\gg 1$, behaves locally as the one of a {\bf superposition of independent random wavefunctions}, having all the same wavenumber $k$, but different directions. Formally, such a superposition is defined as
\begin{equation}\label{rw}
u_{J;k}(x):=\sqrt{\frac{2}{J}}\sum_{j=1}^J\cos\(kx_1\cos\theta_j+kx_2\sin\theta_j+\phi_j\), \quad x= (x_1,x_2)\in\R^2, \quad J\gg1,
\end{equation}
where $\theta_j$ and $\phi_j$ are, respectively, random directions and random phases such that $\(\theta_1,\phi_1,\dots,\theta_J,\phi_J\)$ are i.i.d. uniform random variables on $[0,2\pi]$; observe that $\E[u_{J;k}(x) ]= 0$ for every $x$. Our aim in this section is to illustrate how one can take advantage of the main findings of the present paper, in order to characterise the local high-energy fluctuations of the nodal lengths of the field $u_{J;k}$, when restricted to the square $[0,1]^2$ (the discussion below actually applies to any bounded subset of $\R^2$ -- the choice of $[0,1]^2$ being only motivated by notational convenience).

\smallskip

For $i=1,2$, we set $\partial_i := \partial/\partial x_i$, and define $\partial_0$ to be the identity operator. In what follows, we denote by $C^1([0,1]^2)$ the class of continuous real-valued mappings on $[0,1]^2$ having continuous first order partial derivatives. Recall that $C^1([0,1]^2)$ is a Polish space, when endowed with the metric induced by the norm $\| f\| = \sum_{i=0}^2 \| \partial_i f\|_\infty$.

\smallskip

An application of the classical multivariate CLT, together with some standard covariance computations, reveals that, for every $d\geq 1$, for every $(i_1,...,i_d)\in \{0,1,2\}^d$ and every $x^1,...,x^d\in [0,1]^2$,
$$
(\partial_{i_1} u_{J;k}(x^1), ..., \partial_{i_d} u_{J;k}(x^d))\implies (\partial_{i_1} b_k (x^1), ..., \partial_{i_d} b_k(x^d)),\quad J\to\infty,
$$
that is: the 3-dimensional field $(u_{J;k},\partial_1 u_{J;k}, \partial_2 u_{J;k})$ converges to $(b_k, \partial_1 b_k, \partial_2 b_k)$, in the sense of finite-dimensional distributions. Tedious but standard computations (omitted) also show that, for every fixed $k$, there exists a finite constant $B = B(k)$ such that, for every $i=0,1,2$, and every $x,y\in [0,1]^2$
$$
\E[ ( u_{J;k}(x) - u_{J;k}(y))^6]\leq B \|x-y\|^3.  
$$
We can now apply \cite[Theorem 2 and Remark 2]{RS:01}, together with \cite[Theorem 3]{APP:17} and the Continous Mapping Theorem \cite[Theorem 2.7]{B:99} to deduce that, as $J\sto \infty$: (a) $u_{J;k}$ weakly converges to $b_k$ in the space $C^1([0,1]^2)$ (that is, $\E[\varphi(u_{J;k})]\sto \E[\varphi(b_k)]$ for every $\varphi : C^1([0,1]^2)\sto \R$ continuous and bounded), and (b) for every collection $D_1,...,D_m$ of compact subsets of $[0,1]^2$,
\begin{eqnarray*}
U(J,k,m) &:=& \left({\rm length}( (u_{J;k})^{-1}(0)\cap D_1),....,{\rm length} ((u_{J;k})^{-1}(0)\cap D_m)\right) \\ 
&\implies& \left({\rm length}( (b_k)^{-1}(0)\cap D_1),....,{\rm length} (b_k)^{-1}(0)\cap D_m)\right).
\end{eqnarray*}
Now denote by $\widehat{U}(J,k,m)$ the normalised version of the vector $U(J,k,m)$ defined above, obtained by replacing each random variable ${\rm length}(u_{J;k})^{-1}(0)\cap D_i)$, $i=1,...,m$, by the quantity
$$
\frac{{\rm length}( (u_{J;k})^{-1}(0)\cap D_i) - \area(D_i)k/\sqrt{8} }{\sqrt{\area(D_i)\log k/256\pi}}
$$
(observe that, according to Theorem \ref{NPR-MR}, $\E[ {\rm length}( (b_k)^{-1}(0)\cap D_1)] = \area(D_i)k/\sqrt{8}$ and, as $k\to\infty$, $\Var [ {\rm length}( (b_k)^{-1}(0)\cap D_i)] \sim \area(D_i)\log k/256\pi$). 

\smallskip

Reasoning as in the proof of \cite[Theorem 1.8]{NPR:19}, we can therefore deduce the following consequence of Theorem \ref{thm_L}.

\begin{thm}\label{t:srw} Let $\{J_n\}$ be a sequence of integers diverging to infinity. Then, there exists a sequence $\{k_n\}$ such that $k_n\sto \infty$, and
$$
\widehat{U}(J_n,k_n,m)\implies N(0,C), \quad n\to\infty,
$$
where $N(0,C)$ denotes a $m$-dimensional centered Gaussian vector with covariance $C$ as in Theorem \ref{thm_L}.

\end{thm}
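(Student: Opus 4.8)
The plan is to prove Theorem \ref{t:srw} by a diagonal argument that interpolates the two one-parameter limits already isolated before the statement. Two ingredients are needed. First, for every \emph{fixed} wavenumber $k$, the weak convergence $U(J,k,m)\implies V(k,m)$ recalled just above, where $V(k,m) := ({\rm length}(b_k^{-1}(0)\cap D_1),\dots,{\rm length}(b_k^{-1}(0)\cap D_m))$, is preserved under the deterministic affine normalisation defining $\widehat U$; hence, as $J\to\infty$ with $k$ fixed,
$$
\widehat U(J,k,m) \implies \widehat V(k,m) := \left( \frac{{\rm length}(b_k^{-1}(0)\cap D_i) - \area(D_i)k/\sqrt 8}{\sqrt{\area(D_i)\log k/256\pi}} \right)_{i=1,\dots,m}.
$$
Second, I claim that $\widehat V(k,m) \implies N(0,C)$ as $k\to\infty$. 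Writing $b_k = B_{E_k}$ with $E_k = k^2/(4\pi^2)$, the mean $\area(D_i)k/\sqrt 8$ coincides exactly with $\E[\L_{E_k}(D_i)]$ by \eqref{meanlength}, while $\log E_k \sim 2\log k$ gives $\area(D_i)\log k/256\pi \sim \Var(\L_{E_k}(D_i))$ by \eqref{e:varlength}; thus each coordinate of $\widehat V(k,m)$ equals $\widetilde\L_{E_k}(D_i)$ multiplied by a deterministic factor tending to $1$, and the claim follows from Theorem \ref{thm_L} and Slutsky's lemma.

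Next I would metrise weak convergence of probability measures on $\R^m$ by a fixed distance $d$ (for instance the L\'evy--Prokhorov metric), and denote by $\mu_{J,k}$, $\nu_k$ and $\gamma$ the laws of $\widehat U(J,k,m)$, $\widehat V(k,m)$ and $N(0,C)$ respectively. The two ingredients read $d(\mu_{J,k},\nu_k)\to 0$ as $J\to\infty$ (for each fixed $k$) and $d(\nu_k,\gamma)\to 0$ as $k\to\infty$. For each integer $p\geq 1$ I would choose $N_p$, increasing in $p$, so that $d(\mu_{J,p},\nu_p) < 1/p$ whenever $J\geq N_p$. Since the prescribed sequence satisfies $J_n\to\infty$, for each $p$ there is an index $n_p$, which I take strictly increasing, such that $J_n\geq N_p$ for all $n\geq n_p$. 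Setting $k_n := p$ for $n_p \leq n < n_{p+1}$ produces a sequence with $k_n\to\infty$ and, for such $n$, with $d(\mu_{J_n,k_n},\nu_{k_n}) < 1/k_n$. The triangle inequality then gives
$$
d(\mu_{J_n,k_n},\gamma) \leq d(\mu_{J_n,k_n},\nu_{k_n}) + d(\nu_{k_n},\gamma) \leq \frac{1}{k_n} + d(\nu_{k_n},\gamma) \to 0,
$$
as $n\to\infty$, because $k_n\to\infty$; this is precisely $\widehat U(J_n,k_n,m)\implies N(0,C)$.

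I expect the only genuinely delicate point to be the diagonal extraction in the second paragraph: one must let $k_n$ diverge slowly enough that, for the \emph{given} (and not freely chosen) sequence $\{J_n\}$, each index $J_n$ already exceeds the threshold $N_{k_n}$ that guarantees closeness of $\mu_{J_n,k_n}$ to $\nu_{k_n}$. Everything else --- identifying the two limiting regimes and matching the normalising constants under the change of variables $E = k^2/(4\pi^2)$ --- is routine, and relies only on Theorem \ref{thm_L} together with the weak convergence $U(J,k,m)\implies V(k,m)$ recalled before the statement. This reproduces the structure of the proof of \cite[Theorem 1.8]{NPR:19}.
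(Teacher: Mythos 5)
Your proposal is correct and follows essentially the same route as the paper, which proves Theorem \ref{t:srw} by exactly this two-step scheme (the fixed-$k$ convergence $U(J,k,m)\implies V(k,m)$ recalled before the statement, the $k\to\infty$ CLT from Theorem \ref{thm_L} after matching the constants via $E_k = k^2/(4\pi^2)$, and a diagonal extraction), deferring the details to the proof of \cite[Theorem 1.8]{NPR:19}. Your write-up in fact makes explicit the metrisation and slow-divergence argument that the paper leaves implicit.
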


As for Theorem \ref{t:rrw}, the statement of Theorem \ref{t:srw} does not provide any quantitative information about the sequence $\{k_n\}$. In order to deduce a more informative conclusion, one would in principle need to explicitly couple the two fields $u_{J;k}$ and $b_1$ on the same probability space, and then to use such a coupling in order to assess the distance between the distribution of $U(J,k,m)$ and that of $ \left({\rm length}( (b_k)^{-1}(0)\cap D_1),....,{\rm length} (b_k)^{-1}(0)\cap D_m)\right)$. We prefer to regard this demanding task as a separate problem, and leave it open for further investigation.

\medskip

The rest of the paper is devoted to the proof of our main findings.


\section{Proofs of main results}\label{proofs}


\subsection{Proof of Theorem \ref{thm_L} }

In order to prove our main results, we first need to establish two technical statements, substantially extending \cite[Propositions 5.1 and 5.2]{NPR:19}. 
\medskip

From now on, for any $D\in \A$, we set $\diam (D):=\sup_{x,y\in D}\norm{x-y}$ (with $\diam \emptyset = 0$ by definition)
and define, for each $\eta \geq 0$,
\begin{flalign*}
&D^{+\eta}:=\{x\in\R^2: \operatorname{dist}(x,D)\leq\eta\} \quad\mbox{and}\quad  D^{-\eta}:=\{x\in D: \operatorname{dist}(x,\partial D)\geq\eta\}\,.
\end{flalign*}

{{\begin{prop}\label{prop1}
Let $q_{i,j}\geq 1$ for $i,j=0,1,2$ and $\sum_{i,j=0}^2\, q_{i,j}=4$. Then, for all $D_1,D_2 \in \A$ one has that, as $E\sto\infty$,
\begin{flalign}
&\int_{D_1}\int_{D_2}\,\prod_{i,j=0}^2\,\widetilde{r}^E_{i,j}(x-y)^{q_{i,j}}\,dx\,dy=\label{eq_prop1}\\
&=\int_{0}^{\diam(D_1 \cap D_2)}\,d\phi \,\area\(D_1\cap D_2^{-\phi}\)\,\int_{0}^{2\pi}\,\prod_{i,j=0}^2\,\tilde{r}^E_{i,j}(\phi\cos\theta,\phi\sin\theta)^{q_{i,j}}\,\phi\,d\theta+o\(\frac{\log E}{E}\)\,.\notag
\end{flalign}
\end{prop}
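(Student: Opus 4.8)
The plan is to exploit stationarity in order to collapse the double integral into a radial integral, and then to read off the geometric weight $\area(D_1\cap D_2^{-\phi})$ through a Fubini argument; the guiding principle is that the logarithmic factor responsible for the order $\log E/E$ is generated \emph{only} by radii $\phi$ that are small relative to the distance of the base point from $\partial D_2$. Writing $F_E(z):=\prod_{i,j=0}^2\widetilde{r}^E_{i,j}(z)^{q_{i,j}}$ and using that $F_E(x-y)$ depends on $(x,y)$ only through $x-y$, I would first rewrite
\[
\int_{D_1}\int_{D_2}F_E(x-y)\,dx\,dy=\int_{D_1}\Big(\int_{D_2}F_E(x-y)\,dy\Big)dx,
\]
and pass, for each fixed $x\in D_1$, to polar coordinates $y=x-(\phi\cos\theta,\phi\sin\theta)$ in the inner integral. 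After the rescaling $\psi:=2\pi\sqrt{E}\,\phi$, the scaling law of $J_0$ and of its derivatives shows that $F_E(\phi\cos\theta,\phi\sin\theta)$ equals a fixed, $E$-independent function $\Phi(\psi,\theta)$. Since $\sum_{i,j}q_{i,j}=4$ and each normalised entry inherits the $O(\psi^{-1/2})$ large-argument decay of Bessel functions (see \cite{NPR:19}), one obtains the uniform bound $|\Phi(\psi,\theta)|\le c\,(1+\psi)^{-2}$; as $\int\psi^{-2}\cdot\psi\,d\psi$ diverges only logarithmically, this is precisely the borderline decay producing the order $\log E/E$, and it is here that the dominance of the fourth chaos ($\sum q_{i,j}=4$) is used.

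Next I would isolate the main term by partitioning, for each $x$, the polar domain $\{(\phi,\theta):x-(\phi\cos\theta,\phi\sin\theta)\in D_2\}$ at the radius $\delta(x):=\dist(x,\partial D_2)$. When $x\in D_2$ and $\phi<\delta(x)$ the disc $B(x,\phi)$ lies entirely in $D_2$, so $x-(\phi\cos\theta,\phi\sin\theta)\in D_2$ for every $\theta$ and the angular integral runs over the full circle; since $D_2$ is simply connected, conversely $\phi\ge\delta(x)$ forces the circle to leave $D_2$. The contribution of this full-circle region is, by Fubini,
\[
\int_{D_1\cap D_2}\int_0^{\delta(x)}\Big(\int_0^{2\pi}\Phi(\psi,\theta)\,d\theta\Big)\phi\,d\phi\,dx=\int_0^{\infty}\Big(\int_0^{2\pi}\Phi(\psi,\theta)\,d\theta\Big)\phi\,\area\big(D_1\cap D_2^{-\phi}\big)\,d\phi,
\]
because $\{x\in D_1\cap D_2:\dist(x,\partial D_2)>\phi\}=D_1\cap D_2^{-\phi}$. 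Recalling that $\big(\int_0^{2\pi}\Phi(\psi,\theta)\,d\theta\big)\phi$ is exactly the angular integral appearing on the right-hand side of \eqref{eq_prop1}, this is the claimed main term, and truncating its outer integral at $\diam(D_1\cap D_2)$ changes it by only $O(1/E)$, since on $\{\phi>\diam(D_1\cap D_2)\}$ the radius stays bounded away from $0$ and no logarithm is produced.

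It then remains to estimate the complementary (leftover) region, where $x-(\phi\cos\theta,\phi\sin\theta)\in D_2$ but either $x\notin D_2$ or $\phi\ge\delta(x)$; in all these cases the inner radial integral is supported on $\{\phi\ge\delta(x)\}$ (with $\delta(x)=\dist(x,\partial D_2)$ also for $x\notin D_2$) and on $\{\phi\le R\}$ for a finite $R=R(D_1,D_2)$, as the angular set is empty once $\phi$ exceeds $\dist(x,D_2)+\diam(D_2)$. Using $|\Phi(\psi,\theta)|\le c(1+\psi)^{-2}$, this error is bounded by
\[
\frac{c}{E}\int_{D_1}\int_{\max(\delta(x),\,1/\sqrt E)}^{R}\frac{d\phi}{\phi}\,dx+O\!\Big(\frac1E\Big)\le\frac{c}{E}\int_{D_1}\log\frac{R}{\max(\delta(x),1/\sqrt E)}\,dx+O\!\Big(\frac1E\Big).
\]
Splitting $D_1$ according to whether $\delta(x)<1/\sqrt E$ or not, the $1/\sqrt E$-neighbourhood of $\partial D_2$ has area $O(1/\sqrt E)$ and thus contributes $O(E^{-3/2}\log E)$, while on the complement the bound is controlled by $\int_{D_1}\big(1+\log^+\tfrac1{\dist(x,\partial D_2)}\big)\,dx<\infty$. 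Hence the leftover is $o(\log E/E)$, and combining it with the main term of the previous paragraph yields \eqref{eq_prop1}.

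I expect the two genuinely delicate ingredients to be the uniform, $\theta$-independent decay estimate $|\Phi(\psi,\theta)|\le c(1+\psi)^{-2}$, which has to be extracted from the asymptotics of $J_0,J_1,J_2$ and of their derivatives entering the normalised entries $\widetilde{r}^E_{i,j}$, and the finiteness of $\int_{D_1}\log^+(1/\dist(x,\partial D_2))\,dx$, which is exactly where the piecewise $C^1$ regularity of $\partial D_2$ (that is, $D_2\in\A$) is needed, together with the separate control of the $1/\sqrt E$ boundary layer. By contrast, the algebraic identification of $\area(D_1\cap D_2^{-\phi})$ via Fubini is the structural core of the statement but is essentially elementary once the splitting at $\delta(x)$ has been performed.
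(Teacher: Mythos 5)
Your proposal is correct, and while it shares the same structural skeleton as the paper's proof (polar/coarea decomposition around each base point, identification of the full-circle region with $D_1\cap D_2^{-\phi}$ via Fubini and simple connectivity, Bessel decay $\lvert\widetilde r^1_{i,j}\rvert\le c\psi^{-1/2}$ from \eqref{asymp_infinity} so that four factors give the borderline $\psi^{-2}$), your treatment of the error term is genuinely different and in fact stronger. The paper controls the boundary-layer and far-radius contributions ($C_2$ and $C_3$ in its notation) by a soft argument: it only uses that $\area\(D_1\cap (D_2^{+\alpha}\setminus D_2^{-\alpha})\)\to 0$ as $\alpha\downarrow 0$, and an $\eps$-$\delta$ splitting of the rescaled radial integral at $\delta\sqrt E$, which yields the bound $o(\log E/E)$ appearing in \eqref{eq_prop1}. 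You instead estimate pointwise in the base point $x$, getting $\tfrac{c}{E}\log\(R/\max(\dist(x,\partial D_2),E^{-1/2})\)$, and then integrate over $x\in D_1$ using the quantitative input that $\partial D_2$ has finite length (true for piecewise $C^1$ boundaries), hence its $\eta$-neighbourhood has area $O(\eta)$ and $\int_{D_1}\log^{+}(1/\dist(x,\partial D_2))\,dx<\infty$ by a layer-cake argument; this yields an error of order $O(1/E)$ (plus $O(E^{-3/2}\log E)$ from the boundary layer), not merely $o(\log E/E)$. This is a real gain: by Remark \ref{r:papeete}, the $O(1/E)$ rate was previously available only for convex bodies in $\mathscr{A}_0$ via Steiner's formula, whereas your Minkowski-content argument extends it to all of $\A$. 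Two small points to make explicit if you write this up: the truncation of your main term at $\diam(D_1\cap D_2)$ costs $O(1/E)$ only when $\diam(D_1\cap D_2)>0$, but when it vanishes both the truncated and untruncated main terms are identically zero, so the case distinction is harmless; and the uniform bound $\lvert\Phi(\psi,\theta)\rvert\le c(1+\psi)^{-2}$ should be justified by combining \eqref{asymp_infinity} for large $\psi$ with the trivial bound $\lvert\widetilde r^E_{i,j}\rvert\le 1$ (Cauchy--Schwarz for the unit-variance normalised field and its normalised derivatives) for small $\psi$, exactly as the paper does with \eqref{asymp_zero}.
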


\begin{oss}\label{empty}
Relation \eqref{eq_prop1} yields in particular that, if $D_1,D_2 \in \A$ are such that $\area\(D_1\cap D_2\)=0$, then, as  $E\sto\infty$, 
\begin{flalign}\label{eq2_prop1}
\int_{D_1}\int_{D_2}\,\prod_{i,j=0}^2\,\widetilde{r}^E_{i,j}(x-y)^{q_{i,j}}\,dx\,dy=o\(\frac{\log E}{E}\)\,.
\end{flalign}
\end{oss}
}}
\begin{oss}\label{r:papeete} It is shown in \cite[Proposition 5.1]{NPR:19} that, if $D_1=D_2\in \mathscr{A}_0$, then, in \eqref{eq_prop1} and \eqref{eq2_prop1} one can replace the symbol $o(\log E /E)$ with $O(1/E)$, which provides of course a stronger estimate. By inspection of the arguments developed in \cite{NPR:19}, one also observes that: (i) the estimate $o(\log E /E)$ is the only one needed in the proofs of Theorem \ref{NPR-MR}, and (ii) the proof of \cite[Proposition 5.1]{NPR:19} is the only place in \cite{NPR:19} where convexity is used (since the argument used therein exploits Steiner's formula for convex sets). It follows in particular that, thanks to our Proposition \ref{prop1}, the conclusion of Theorem \ref{NPR-MR} can be extended to the larger class $\mathscr{A}$.

\end{oss}

\begin{proof}[Proof of Proposition \ref{prop1}]
Without loss of generality, we can assume that $E>1$. 
Using the coarea formula, we deduce that
\begin{flalign}
&\,\int_{D_1}\int_{D_2}\,\prod_{i,j=0}^2\,\widetilde{r}^E_{i,j}(x-y)^{q_{i,j}}\,dx\,dy\notag\\
&=\,\int_{0}^{\diam(D_1 \cup D_2)}\,d\phi \,\int_{D_1} \,dx\int_{\partial B_{\phi}(x)\cap D_2}\,\prod_{i,j=0}^2\,\widetilde{r}^E_{i,j}(x-y)^{q_{i,j}}\,dy\notag\\
&=\,\int_{0}^{\diam(D_1 \cup D_2)}\,d\phi \,\int_{D_1}\,\1_{\{x\in D_1:\partial B_{\phi}(x)\cap D_2\neq\varnothing\}} \,dx\int_{\partial B_{\phi}(x)\cap D_2}\,\prod_{i,j=0}^2\,\widetilde{r}^E_{i,j}(x-y)^{q_{i,j}}\,dy\notag\\
&=\,\int_{0}^{\diam(D_1 \cap D_2)}\,d\phi \,\int_{D_1}\,\1_{\{x\in D_1:\partial B_{\phi}(x)\cap D_2\neq\varnothing\}} \,dx\int_{\partial B_{\phi}(x)\cap D_2}\,\prod_{i,j=0}^2\,\widetilde{r}^E_{i,j}(x-y)^{q_{i,j}}\,dy\notag\\
&\qquad+\,\int_{\diam(D_1 \cap D_2)}^{\diam(D_1 \cup D_2)}\,d\phi \,\int_{D_1}\,\1_{\{x\in D_1:\partial B_{\phi}(x)\cap D_2\neq\varnothing\}} \,dx\int_{\partial B_{\phi}(x)\cap D_2}\,\prod_{i,j=0}^2\,\widetilde{r}^E_{i,j}(x-y)^{q_{i,j}}\,dy\notag\\
&=\,\int_{0}^{\diam(D_1 \cap D_2)}\, \,d\phi \,\int_{D_1\cap D_2^{-\phi}}\,dx\,\int_{0}^{2\pi}\,\prod_{i,j=0}^2\,\tilde{r}^E_{i,j}(\phi\cos\theta,\phi\sin\theta)^{q_{i,j}}\,\phi\,d\theta
\notag\\
&\qquad+\underbracket[0.5pt][5pt]{\,\int_{0}^{\diam(D_1 \cap D_2)}\,d\phi \,\int_{D_1\cap \(D_2^{+\phi}\setminus D_2^{-\phi}\)}\,dx\int_{\partial B_{\phi}(x)\cap D_2}\,\prod_{i,j=0}^2\,\tilde{r}^E_{i,j}(x-y)^{q_{i,j}}\,dy}_{=: \, C_2}\label{C_2}\\
&\qquad+\underbracket[0.5pt][5pt]{\,\int_{\diam(D_1 \cap D_2)}^{\diam(D_1 \cup D_2)}\,d\phi \,\int_{D_1\cap D_2^{+\phi}}\,dx\int_{\partial B_{\phi}(x)\cap D_2}\,\prod_{i,j=0}^2\,\tilde{r}^E_{i,j}(x-y)^{q_{i,j}}\,dy}_{=:\, C_3} \label{C_3}\\
&=\int_{0}^{\diam(D_1 \cap D_2)}\,d\phi\,\area\(D_1\cap D_2^{-\phi}\)\,\int_{0}^{2\pi}\,\phi\,d\theta\,\prod_{i,j=0}^2\,\tilde{r}^E_{i,j}(\phi\cos\theta,\phi\sin\theta)^{q_{i,j}}+C_2+C_3\,,\notag
\end{flalign}
where the symbol $\1$ stands for the indicator function.
We notice the following special cases: (i) $\diam(D_1\cap D_2) = \diam(D_1\cup D_2)$, which implies $C_3 = 0$, and (ii) $\diam(D_1\cap D_2) =0$, in which case $C_3$ is the only non-zero term of the previous sum. To deal with $C_2$, we first pass to polar coordinates $y_1=x_1+\phi\cos\theta$, $y_2=x_2+\phi\sin\theta$, and then we perform the change of variable $\phi=\psi/\sqrt E$, to have 
\begin{flalign}
&\abs{C_2}
=\abs{\int_{0}^{\diam(D_1 \cap D_2)}\,\phi\,d\phi \,\int_{D_1\cap \(D_2^{+\phi}\setminus D_2^{-\phi}\)}\,dx\int_{\partial B_{\phi}(x)\cap D_2}\,\prod_{i,j=0}^2\,\tilde{r}^E_{i,j}(\phi\cos\theta,\phi\sin\theta)^{q_{i,j}}}\notag\\
&\leq\int_{0}^{\sqrt{E}\diam(D_1 \cap D_2)}\,\frac{d\psi}{E} \,\int_{D_1\cap \(D_2^{+\psi/\sqrt{E}}\setminus D_2^{-\psi/\sqrt{E}}\)}\,dx\,\int_0^{2\pi}\,\prod_{i,j=0}^2 \abs{\tilde{r}^1_{i,j}(\psi\cos\theta,\psi\sin\theta)^{q_{i,j}}}\, \psi\,d\theta\notag.
\end{flalign}
We now split the integral on the right-hand side of the previous inequality as $$\int_{0}^{\sqrt{E}\diam(D_1 \cap D_2)} = \int_{0}^1+ \int_{1}^{\sqrt{E}\diam(D_1 \cap D_2)}\, ,$$ and denote the two resulting integrals as $C_{2,1}$ and $C_{2,2}$, respectively. Since $C_{2,1}$ is an integral over a fixed compact interval, we can directly use the fact that the kernels $\tilde{r}^1_{i,j}$ are all bounded by 1, to obtain that 
$$
|C_{2,1}|\leq \frac{2\pi}{E}\,\int_{0}^{1}\,\psi\,\area\(D_1\cap \(D_2^{+\psi/\sqrt{E}}\setminus D_2^{-\psi/\sqrt{E}}\)\)\,d\psi \label{psi}= O\left(\frac{1}{E}\right).
$$
To deal with $C_{2,2}$, we start by using the asymptotic relation \eqref{asymp_zero} in order to deduce that, for some absolute constant $c$,
\begin{equation}\label{c2.1}
|C_{2,2}| \leq \frac{c}{E}\,\int_{1}^{\sqrt{E}\,\diam(D_1 \cap D_2)}\,\frac{1}{\psi}\area\(D_1\cap \(D_2^{+\psi/\sqrt{E}}\setminus D_2^{-\psi/\sqrt{E}}\)\) \,d\psi.
\end{equation}
\noindent Now, as $E\sto \infty$, and since $D_2$ has a $C^1$ boundary, one has that, as $\alpha\downarrow 0$,
$$
\area\(D_1\cap \(D_2^{+\alpha}\setminus D_2^{-\alpha }\)\)\to 0 \,,
$$
which in turn implies that, $\forall\,\eps>0$, there exists $\delta >0$ such that 
$$
\area\(D_1\cap \(D_2^{+\psi/\sqrt{E}}\setminus D_2^{-\psi/\sqrt{E}}\)\) \leq \eps\,, \quad\text{whenever $\frac{\psi}{\sqrt{E}}<\delta$\,. }
$$
For a fixed $\eps>0$, we consequently select such a $\delta$ and write
\begin{flalign*}
&\int_{1}^{\sqrt{E}\,\diam(D_1 \cap D_2)}\,\frac{1}{\psi}\,\area\(D_1\cap  \(D_2^{+\psi/\sqrt{E}}\setminus D_2^{-\psi/\sqrt{E}}\)\)\,d\psi=\\
&=\underbracket[0.5pt][5pt]{\int_{1}^{\delta\sqrt{E}}\,\frac{1}{\psi}\,\area\(D_1\cap  \(D_2^{+\psi/\sqrt{E}}\setminus D_2^{-\psi/\sqrt{E}}\)\)\,d\psi}_{=:\,  C_{2,2,1}}\\
&\quad+\underbracket[0.5pt][5pt]{\int_{\delta\sqrt{E}}^{\sqrt{E}\,\diam(D_1 \cap D_2)}\,\frac{1}{\psi}\,\area\(D_1\cap  \(D_2^{+\psi/\sqrt{E}}\setminus D_2^{-\psi/\sqrt{E}}\)\)\,d\psi}_{=: \, C_{2,2,2}}\,;
\end{flalign*}
hence\footnote{Note that, if $\diam(D_1 \cap D_2)\le\delta$, there is no need of splitting the integral in the sum of $C_{2,2,1}$ and $C_{2,2,2}$, as in this case $$\int_{1}^{\sqrt{E}\diam(D_1 \cap D_2)}\,\frac{d\psi}{\psi}\area\(D_1\cap  \(D_2^{+\frac{\psi}{\sqrt E}}\setminus D_2^{-\frac{\psi}{\sqrt E}}\)\)\le \int_{1}^{\delta\sqrt{E}}\frac{d\psi}{\psi}\area\(D_1\cap  \(D_2^{+\frac{\psi}{\sqrt E} }\setminus D_2^{-\frac{\psi}{\sqrt E}}\)\)\,,$$ and the last integral equals $C_{2,2,1}$.}
\begin{flalign*}
C_{2,2,2}&\leq \area\(D_1\)\, \log\(\frac{\diam(D_1 \cap D_2)}{\delta}\) \quad\text{while}\quad 
C_{2,2,1}\leq \eps\, \log\(\delta\sqrt{E}\)\,.
\end{flalign*}
As a consequence
\begin{flalign*}
&\limsup_{E\to\infty}\frac{1}{\log\sqrt{E}}\int_{1}^{\sqrt{E}\,\diam(D_1 \cap D_2)}\,\frac{1}{\psi}\,\area\(D_1\cap \(D_2^{+\psi/\sqrt{E}}\setminus D_2^{-\psi/\sqrt{E}}\)\)\,d\psi\\
&\leq\limsup_{E\to\infty}\frac{1}{\log\sqrt{E}}\(\eps\, \log\(\delta\sqrt{E}\)+\area\(D_1\)\, \log\(\frac{\diam(D_1 \cap D_2)}{\delta}\)\)=\eps\,, \quad \forall \,\eps>0\,,
\end{flalign*}
which implies
\begin{equation}\label{c2.2}
\lim_{E\to\infty}\frac{1}{\log\sqrt{E}}\int_{1}^{\sqrt{E}\,\diam(D_1 \cap D_2)}\,\frac{1}{\psi}\,\area\(D_1\cap \(D_2^{+\psi/\sqrt{E}}\setminus D_2^{-\psi/\sqrt{E}}\)\)\,d\psi=0\,.
\end{equation}
Combining \eqref{c2.1} with \eqref{c2.2}, we can conclude that, as $E\sto\infty$,
$$
C_2 = o\(\frac{\log E}{E}\)\,.
$$
To deal with $C_3$, consider first the case in which $\diam(D_1\cap D_2)>0$. Exploiting again the asymptotic relations in \eqref{asymp_infinity}, we have
\begin{flalign*}
\abs{C_3}
&=\abs{\int_{\diam(D_1 \cap D_2)}^{\diam(D_1 \cup D_2)}\,\phi\,d\phi \,\int_{D_1\cap D_2^{+\phi}}\,dx\int_{\partial B_{\phi}(x)\cap D_2}\,\prod_{i,j=0}^2\,\tilde{r}^E_{i,j}(\phi\cos\theta,\phi\sin\theta)^{q_{i,j}}\,d\theta}\\
&\leq\frac{2\pi}{E}\,\int_{\sqrt{E}\,\diam(D_1 \cap D_2)}^{\sqrt{E}\,\diam(D_1 \cup D_2)}\,\frac{1}{\pi^4\,\psi}\int_{D_1\cap D_2^{+\psi/\sqrt{E}}}\,dx\\
&\leq\frac{2}{\pi^3E}\,\area\(D_1\)\,\int_{\sqrt{E}\,\diam(D_1 \cap D_2)}^{\sqrt{E}\,\diam(D_1 \cup D_2)}\,\frac{1}{\psi}\,d\psi\\
&=\frac{2}{\pi^3\,E}\,\area\(D_1\)\,\(\log(\sqrt{E}\,\diam(D_1 \cup D_2))-\log(\sqrt{E}\,\diam(D_1 \cap D_2))\)\\
&=\log\[\frac{\diam(D_1 \cup D_2)}{\diam(D_1 \cap D_2)}\]\frac{2}{\pi^3\,E}\,\area\(D_1\)=O\(\frac{1}{E}\)\,.
\end{flalign*}
If $\diam(D_1\cap D_2) = 0$, then one proves exactly as above that 
\begin{flalign*}
& |C_3| = O\(\frac{1}{E}\)+\frac{c}{E}\int_{1}^{\sqrt E \diam(D_1 \cup D_2)}\,\frac 1\psi \,\area\(D_1\cap D_2^{+\psi/\sqrt E}\)\,d\psi\,.
\end{flalign*}
\noindent Now, since $\area\(D_1\cap D_2\)=0$ and $D_2$ has a $C^1$ boundary, as $\alpha\downarrow 0$,
$$
\area\(D_1\cap D_2^{+\alpha }\) \to 0\,;
$$
as before, this implies that $\forall\,\eps>0$,  there exists $\delta >0$ such that 
$$
\area\(D_1\cap D_2^{+\psi/\sqrt{E}}\) \leq \eps\,, \quad\text{whenever $\frac{\psi}{\sqrt{E}}<\delta$\,. }
$$
For any fixed $\eps>0$, pick such a $\delta$ to split the integral as follows
\begin{flalign*}
&\int_{1}^{\sqrt E \diam(D_1 \cup D_2)}\,\frac 1\psi \,\area\(D_1\cap D_2^{+\psi/\sqrt E}\)\,d\psi=\\
&=\underbracket[0.5pt][5pt]{\int_{1}^{\delta\sqrt{E}}\,\frac{1}{\psi}\,\area\(D_1\cap D_2^{+\psi/\sqrt E}\)\,d\psi}_{=:\, I_1}+\underbracket[0.5pt][5pt]{\int_{\delta\sqrt{E}}^{\sqrt{E}\,\diam(D_1 \cup D_2)}\,\frac{1}{\psi}\,\area\(D_1\cap D_2^{+\psi/\sqrt E}\)\,d\psi}_{=:\, I_2}\,;
\end{flalign*}
hence\footnote{Note that, if $\diam(D_1 \cup D_2)\le\delta$, there is no need of splitting the integral in the sum of $I_1$ and $I_2$, as $$\int_{1}^{\sqrt E \diam(D_1 \cup D_2)}\,\frac 1\psi\,d\psi \,\area\(D_1\cap D_2^{+\psi/\sqrt E}\)\le \int_{1}^{\delta \sqrt E}\,\frac 1\psi\,d\psi \,\area\(D_1\cap D_2^{+\psi/\sqrt E}\)=I_1\,.$$}
\begin{flalign*}
I_{2}&\leq \area\(D_1\)\, \log\(\frac{\diam(D_1 \cup D_2)}{\delta}\) \quad\text{while}\quad 
I_{1}\leq \eps\, \log\(\delta\sqrt{E}\)\,.
\end{flalign*}
As a consequence
\begin{flalign*}
&\limsup_{E\to\infty}\frac{1}{\log\sqrt{E}}\int_{1}^{\sqrt{E}\,\diam(D_1 \cup D_2)}\,\frac{1}{\psi}\,\area\(D_1\cap D_2^{+\psi/\sqrt{E}}\)\,d\psi\\
&\leq\limsup_{E\to\infty}\frac{1}{\log\sqrt{E}}\(\eps\, \log\(\delta\sqrt{E}\)+\area\(D_1\)\, \log\(\frac{\diam(D_1 \cup D_2)}{\delta}\)\)=\eps\,, \quad \forall \,\eps>0\,,
\end{flalign*}
which implies
\begin{equation}\label{i2}
\lim_{E\to\infty}\frac{1}{\log\sqrt{E}}\int_{1}^{\sqrt{E}\,\diam(D_1 \cup D_2)}\,\frac{1}{\psi}\,\area\(D_1\cap D_2^{+\psi/\sqrt{E}}\)\,d\psi=0\,.
\end{equation}
Therefore, as $E\sto\infty$, we have that
$$
\int_{D_1}\int_{D_2}\,\prod_{i,j=0}^2\,\widetilde{r}^E_{i,j}(x-y)^{q_{i,j}}\,dx\,dy=o\(\frac{\log E}{E}\)\,,
$$ 
and this concludes the proof.
\end{proof}

\medskip

\begin{prop}\label{prop2}
Let $q_{i,j}\geq 1$ for $i,j=0,1,2$ and $\sum_{i,j=0}^2\, q_{i,j}=4$. Then, for all $D_1,D_2 \in \A$ 
we have, as $E\sto\infty$, 
\begin{flalign}
&\int_{D_1}\int_{D_2}\,\prod_{i,j=0}^2\,\widetilde{r}^E_{i,j}(x-y)^{q_{i,j}}\,dx\,dy=o\(\frac{\log E}{E}\)+\qquad\qquad\qquad\qquad\qquad\qquad\qquad\qquad\qquad\qquad\qquad\qquad\qquad\notag\\
&+\int_{0}^{2\pi}\,\prod_{i,j=0}^2\,h^1_{i,j}(\theta)^{q_{i,j}}\,d\theta\,\frac{1}{E}\,\int_{1}^{\sqrt{E}\diam(D_1 \cap D_2)}\area\(D_1\cap D_2^{-\psi/\sqrt{E}}\)g^1_{i,j}(\psi)^{q_{i,j}}\psi\,d\psi\,,
\end{flalign}
where the functions $g^E_{i,j}\,, \,h^E_{i,j}$ are defined in \eqref{asymp_infinity}.
\end{prop}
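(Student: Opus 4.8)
The plan is to transform the leading term produced by Proposition \ref{prop1} into the announced expression, by rescaling the radial variable and then inserting the large-argument factorisation of the kernels. First I would apply to the main term on the right-hand side of \eqref{eq_prop1} the change of variables $\phi=\psi/\sqrt{E}$ together with the scaling relation $\widetilde{r}^E_{i,j}(\phi\cos\theta,\phi\sin\theta)=\widetilde{r}^1_{i,j}(\sqrt{E}\,\phi\cos\theta,\sqrt{E}\,\phi\sin\theta)$ already exploited in the proof of Proposition \ref{prop1}. Since $\phi\,d\phi=\psi\,d\psi/E$, this recasts the leading term as
\begin{equation*}
\frac{1}{E}\int_{0}^{\sqrt{E}\diam\(D_1\cap D_2\)}\area\(D_1\cap D_2^{-\psi/\sqrt{E}}\)\int_{0}^{2\pi}\prod_{i,j=0}^2\widetilde{r}^1_{i,j}(\psi\cos\theta,\psi\sin\theta)^{q_{i,j}}\,\psi\,d\theta\,d\psi.
\end{equation*}
I would then split the $\psi$-integral as $\int_0^1+\int_1^{\sqrt{E}\diam(D_1\cap D_2)}$. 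On the fixed compact interval $[0,1]$ the kernels are bounded by $1$ and $\area\(D_1\cap D_2^{-\psi/\sqrt{E}}\)\le\area\(D_1\)$, exactly as for the term $C_{2,1}$ in Proposition \ref{prop1}, so this piece contributes $O(1/E)=o(\log E/E)$ and may be absorbed into the error.

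On the remaining range $\psi\ge 1$ I would invoke the large-argument asymptotics \eqref{asymp_infinity}, which furnish the factorised approximation $\widetilde{r}^1_{i,j}(\psi\cos\theta,\psi\sin\theta)=h^1_{i,j}(\theta)\,g^1_{i,j}(\psi)+O(\psi^{-3/2})$, with the radial envelopes $g^1_{i,j}$ decaying like $\psi^{-1/2}$. Substituting the principal term into the product $\prod_{i,j}(\cdot)^{q_{i,j}}$ decouples the $\theta$- and $\psi$-dependences: the angular part factors out as $\int_0^{2\pi}\prod_{i,j=0}^2 h^1_{i,j}(\theta)^{q_{i,j}}\,d\theta$, while the radial part produces precisely
\begin{equation*}
\frac{1}{E}\int_{1}^{\sqrt{E}\diam\(D_1\cap D_2\)}\area\(D_1\cap D_2^{-\psi/\sqrt{E}}\)\prod_{i,j=0}^2 g^1_{i,j}(\psi)^{q_{i,j}}\,\psi\,d\psi,
\end{equation*}
which is the asserted main term.

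The main obstacle is the quantitative control of the remainder created when the product of the exact kernels is replaced by the product of their principal factors. Here I would expand $\prod_{i,j}\bigl(h^1_{i,j}g^1_{i,j}+O(\psi^{-3/2})\bigr)^{q_{i,j}}$ and track the orders. Because $\sum_{i,j}q_{i,j}=4$ and each factor decays like $\psi^{-1/2}$, the principal contribution behaves like $\psi^{-2}$, so that $\prod_{i,j}g^1_{i,j}(\psi)^{q_{i,j}}\,\psi\sim\psi^{-1}$ and the displayed radial integral is genuinely of order $\log E$; each cross term in which at least one principal factor is replaced by its $O(\psi^{-3/2})$ remainder is of order $\psi^{-3}$, so that after multiplication by $\psi$ it is integrable on $[1,\infty)$. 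Bounding $\area\(D_1\cap D_2^{-\psi/\sqrt{E}}\)$ uniformly by $\area\(D_1\)$, every such remainder integral is $O(1)$ and, carrying the global prefactor $1/E$, contributes $O(1/E)=o(\log E/E)$. Collecting the principal term together with these estimates and with the $o(\log E/E)$ inherited from Proposition \ref{prop1} yields the claim.
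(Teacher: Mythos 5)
Your proposal is correct and follows essentially the same route as the paper: change of variables $\phi=\psi/\sqrt{E}$ in the leading term of Proposition \ref{prop1}, splitting of the radial integral at $\psi=1$ with the piece on $[0,1]$ absorbed as $O(1/E)$, and substitution of the factorised asymptotics \eqref{asymp_infinity} on $\psi\geq 1$. The paper compresses the remainder analysis into a single line, whereas you spell out why the cross terms (decaying at least like $\psi^{-3}$, hence integrable against $\psi\,d\psi$ on $[1,\infty)$) contribute only $O(1/E)$; this is exactly the justification the paper leaves implicit.
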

\proof
Performing a change of variable, we have that the first term on the right hand side of \eqref{eq_prop1} is equal to
\begin{flalign*}
&\frac{1}{E}\,\int_{0}^{\sqrt{E}\,\diam(D_1 \cap D_2)}\,d\psi\,\int_{D_1\cap D_2^{-\psi/\sqrt{E}}}\,dx \,\int_{0}^{2\pi}\,\psi\,d\theta\,\prod_{i,j=0}^2\,\tilde{r}^1_{i,j}(\psi\cos\theta,\psi\sin\theta)^{q_{i,j}}\\
&=\frac{1}{E}\,\int_{0}^{\sqrt{E}\,\diam(D_1 \cap D_2)}\,d\psi\,\area\(D_1\cap D_2^{-\frac{\psi}{\sqrt{E}}}\)\,\int_{0}^{2\pi}\,\psi\,d\theta\,\prod_{i,j=0}^2\,\tilde{r}^1_{i,j}(\psi\cos\theta,\psi\sin\theta)^{q_{i,j}}\\
&=\frac{1}{E}\,\int_{0}^{1}\,d\psi\,\area\(D_1\cap D_2^{-\psi/\sqrt{E}}\)\,\int_{0}^{2\pi}\,\psi\,d\theta\,\prod_{i,j=0}^2\,\tilde{r}^1_{i,j}(\psi\cos\theta,\psi\sin\theta)^{q_{i,j}}\\
&+\frac{1}{E}\,\int_{1}^{\sqrt{E}\,\diam(D_1 \cap D_2)}\,d\psi\,\area\(D_1\cap D_2^{-\psi/\sqrt{E}}\)\int_{0}^{2\pi}\,\psi\,d\theta\,\prod_{i,j=0}^2\,\tilde{r}^1_{i,j}(\psi\cos\theta,\psi\sin\theta)^{q_{i,j}} =:C_1\,.
\end{flalign*}
Using the asymptotic relations in \eqref{asymp_infinity} and \eqref{asymp_zero}, we have that
\begin{flalign*}
&C_1 = O\(\frac{1}{E}\)\\
& \quad\quad\quad +\int_{0}^{2\pi}\prod_{i,j=0}^2 h^1_{i,j}(\theta)^{q_{i,j}} d\theta\frac{1}{E}\int_{1}^{\sqrt{E}\diam(D_1 \cap D_2)}\area\(D_1\cap D_2^{-\psi/\sqrt{E}}\)g^1_{i,j}(\psi)^{q_{i,j}}\psi\,d\psi\,,
\end{flalign*}
which concludes the proof.
\endproof

\medskip

\begin{proof}[{\bf Proof of Theorem \ref{thm_L}, real case}]
From Theorem \ref{NPR-MR} suitably extended to the class $\mathscr{A}$ (see Remark \ref{r:papeete}), we already know that
$$
\widetilde{\L}_E(D)\stackrel{d}{\to}Z\sim\NN(0,1)\,, \text{ as } \  E\sto\infty\,,
$$
which is implied by the convergence of the fourth chaotic component $\L_{E}^{[4]}(D)$, that is the projection of $\L_E(D)$ onto the $4$-th Wiener chaos associated with $B_E$ (see Section A.2), i.e.
$$
\frac{\L_{E}^{[4]}(D)}{\sqrt{\Var\L_{E}^{[4]}(D)}}\stackrel{d}{\to}Z\sim\NN(0,1)\,,
\quad \text{  and the fact that } \quad
\widetilde{\L}_E(D)=\frac{\L_{E}^{[4]}(D)}{\sqrt{\Var\L_{E}^{[4]}(D)}}+o(1)\,,
$$
where the notation $o(1)$ indicates a sequence converging to zero in $L^2(\P)$. Moreover, from Proposition \ref{propvar4L}, we infer that
\begin{flalign}\label{L_4_norm}
&\frac{\L_E^{[4]}(D)}{\sqrt{\Var \L_E^{[4]}(D)}}=\underbracket[0.5pt][5pt]{ \,\,\,\, \frac{\sqrt{\pi^3 \,E}}{\sqrt{16\,\area (D)\,\log E}} \,\,\,\, }_{=: \, K_E(D)}\,\times\notag\\
&\qquad\times\{8\,a_{1,E}(D)-a_{2,E}(D)-a_{3,E}(D)-2\,a_{4,E}(D)-8\,a_{5,E}(D)-8\,a_{6,E}(D)\}\,.
\end{flalign}
To prove the convergence of $\(\widetilde{\L}_E(D_1),\dots,\widetilde{\L}_E(D_m)\)$, $D_i \in \A$ for each $i=1,\dots,m$, one can now use \cite[Theorem 1]{PT:05}, which, since each variable $K_E(D)\,a_{i,E}(D)$, $i\in \{1,\dots,6\}$ is a member of the fourth Wiener chaos associated with $B_E$ and converges to a Gaussian random variable, requires us to show that each covariance $K_E(D_1)K_E(D_2) \Cov\(a_{i,E}(D_1), a_{j,E}(D_2)\)$, $1\leq i,j\leq 6$, converges to an appropriate constant, as $E\to\infty$. 

\medskip

If $\area\(D_1\cap D_2\)=0$, using Proposition \ref{prop1}, we have that 
$$
\Cov\(a_{1,E}(D_1), a_{1,E}(D_2)\)=24\,\int_{D_1}\int_{D_2}\,r^E(x-y)^4\,dx\,dy=o\(\frac{\log E} E\)\,;
$$
while if $\area\(D_1\cap D_2\)>0$, using Proposition \ref{prop2}, we have that
\begin{flalign*}
&\Cov\(a_{1,E}(D_1), a_{1,E}(D_2)\)=24\,\int_{D_1}\int_{D_2}\,r^E(x-y)^4\,dx\,dy\\
&=o\(\frac{\log E}{E}\)+\frac{2\pi\,24}{E}\,\int_{1}^{\sqrt{E}\,\diam(D_1 \cap D_2)}\,d\psi\,\area\(D_1\cap D_2^{-\psi/\sqrt{E}}\)\,\frac{1}{\pi^4\,\psi}\cos^4\(2\pi\,\psi-\frac{\pi}{4}\)\,.
\end{flalign*}
Recalling that
$$
\cos^4x=\frac{3}{8}+\frac{1}{8}\cos(4x)+\frac{1}{2}\cos(2x)\,,
$$
one has that
\begin{flalign}
&\Cov\(a_{1,E}(D_1), a_{1,E}(D_2)\)\notag\\
&= o\(\frac{\log E}{E}\)+\frac{2\pi\,24}{E}\,\int_{1}^{\sqrt{E}\,\diam(D_1 \cap D_2)}\,\area\(D_1\cap D_2^{-\psi/\sqrt{E}}\)\,\times\notag\\
&\qquad\times\,\frac{1}{\pi^4\,\psi}\[\frac{3}{8}+\frac{1}{8}\cos\(8\pi\,\psi-\pi\)+\frac{1}{2}\cos\(4\pi\,\psi-\frac{\pi}{2}\)\]\,d\psi\notag\\
&= o\(\frac{\log E}{E}\)+O\(\frac{1}{E}\)+\frac{18}{\pi^3\,E}\,\int_{1}^{\sqrt{E}\,\diam(D_1 \cap D_2)}\,\frac{1}{\psi}\,\area\(D_1\cap D_2^{-\psi/\sqrt{E}}\)\,d\psi\,, \label{c1.1}
\end{flalign} 
where the $O(E^{-1})$ term comes from integrating the cosines --- see Remark \ref{cosines} for more details. Moreover, as $\alpha\downarrow 0$ and since $D_2$ has a smooth boundary, 
$$
\area\(D_1\cap D_2^{-\alpha} \)\to \area\(D_1\cap D_2\),
$$
implying that that $\forall\,\eps>0$, there exists $\delta >0$ such that 
\begin{equation}\label{eps_delta}
\area\(D_1\cap D_2^{-\psi/\sqrt{E}}\)\geq \area\(D_1\cap D_2\) - \eps \,, \quad\text{whenever $\frac{\psi}{\sqrt{E}}<\delta$\,. }
\end{equation}
Now,
\begin{flalign}
&\int_{1}^{\sqrt{E}\,\diam(D_1 \cap D_2)}\,\frac{1}{\psi}\,\area\(D_1\cap D_2^{-\psi/\sqrt{E}}\)\,d\psi=\notag\\
&=\underbracket[0.5pt][5pt]{\int_{1}^{\delta\sqrt{E}}\,\frac{1}{\psi}\,\area\(D_1\cap D_2^{-\psi/\sqrt{E}}\)\,d\psi}_{C_{1,1}}+\underbracket[0.5pt][5pt]{\int_{\delta\sqrt{E}}^{\sqrt{E}\,\diam(D_1 \cap D_2)}\,\frac{1}{\psi}\,\area\(D_1\cap D_2^{-\psi/\sqrt{E}}\)\,d\psi}_{C_{1,2}} \label{smart_split}
\end{flalign}
and\footnote{Note that, if $\delta$ is such that $\diam(D_1 \cup D_2)\le\delta$, there is no need of splitting the integral in the sum of $C_{1,1}$ and $C_{2,2}$, as
$$\int_{1}^{\sqrt{E}\diam(D_1 \cap D_2)}\,\frac{1}{\psi}\,\area\(D_1\cap D_2^{-\psi/\sqrt{E}}\)\,d\psi \le \int_{1}^{\delta\sqrt{E}}\,\frac{1}{\psi}\,\area\(D_1\cap D_2^{-\psi/\sqrt{E}}\)\,d\psi= C_{1,1}$$}
\begin{flalign*}
C_{1,2}&\leq \area\(D_1\cap D_2\)\, \log\(\frac{\diam(D_1 \cap D_2)}{\delta}\)
\end{flalign*}
while
\begin{flalign*}
&\(\area\(D_1\cap D_2\) - \eps\) \,\log\(\delta\sqrt{E}\)\leq C_{1,1}\leq \area\(D_1\cap D_2\)\, \log\(\delta\sqrt{E}\)\,.
\end{flalign*}
Hence
\begin{flalign*}
&\limsup_{E\to\infty}\frac{1}{\log\sqrt{E}}\int_{1}^{\sqrt{E}\,\diam(D_1 \cap D_2)}\,\frac{1}{\psi}\,\area\(D_1\cap D_2^{-\psi/\sqrt{E}}\)\,d\psi\\
&\leq\limsup_{E\to\infty}\,\frac{1}{\log\sqrt{E}}\(\area\(D_1\cap D_2\)\, \log\(\delta\sqrt{E}\)+\area\(D_1\cap D_2\)\, \log\(\frac{\diam(D_1 \cap D_2)}{\delta}\)\)\\
&=\area\(D_1\cap D_2\)
\end{flalign*}
and
\begin{flalign*}
\liminf_{E\to\infty}\frac{1}{\log\sqrt{E}}\int_{1}^{\sqrt{E}\,\diam(D_1 \cap D_2)}\,\frac{1}{\psi}\,\area\(D_1\cap D_2^{-\psi/\sqrt{E}}\)\,d\psi&\geq\area\(D_1\cap D_2\)-\eps
\end{flalign*}
for each $\eps>0$, and consequently
\begin{equation}\label{c1.2}
\lim_{E\to\infty}\frac{1}{\log\sqrt{E}}\int_{1}^{\sqrt{E}\,\diam(D_1 \cap D_2)}\,\frac{1}{\psi}\,\area\(D_1\cap D_2^{-\psi/\sqrt{E}}\)\,d\psi=\area\(D_1\cap D_2\)\,.
\end{equation}
Therefore, combining \eqref{c1.1} with \eqref{c1.2}, we can conclude that, as $E \sto \infty$ and for $D_1,D_2$ such that $\area(D_1\cap D_2)>0$,
\begin{equation}\label{1strate}
\Cov\(a_{1,E}(D_1), a_{1,E}(D_2)\) \sim \area\(D_1\cap D_2\)\,\frac{9\,\log E}{\pi^3\,E}\,.
\end{equation}

\medskip

\begin{oss}\label{cosines}
Fix $0<\eps\lll1$, and let $\delta=\delta_\eps$ be as in \eqref{eps_delta}, then
\begin{flalign}
&\frac{6\pi}{E}\,\int_{1}^{\sqrt{E}\,\diam(D_1 \cap D_2)}\,\area\(D_1\cap D_2^{-\psi/\sqrt{E}}\)\,\frac{\cos\(8\pi\,\psi-\pi\)}{\psi}\,d\psi\notag\\
&=\frac{6\pi}{E}\,\int_{1}^{\delta\sqrt{E}}\,\area\(D_1\cap D_2^{-\psi/\sqrt{E}}\)\,\frac{\cos\(8\pi\,\psi-\pi\)}{\psi}\,d\psi\notag\\
&\quad+\frac{6\pi}{E}\,\int_{\delta\sqrt{E}}^{\sqrt{E}\,\diam(D_1 \cap D_2)}\,\area\(D_1\cap D_2^{-\psi/\sqrt{E}}\)\,\frac{\cos\(8\pi\,\psi-\pi\)}{\psi}\,d\psi\,.\notag
\end{flalign}
Now,
\begin{flalign}
&\frac{6\pi}{E}\,\int_{1}^{\delta\sqrt{E}}\,\area\(D_1\cap D_2^{-\psi/\sqrt{E}}\)\,\frac{\cos\(8\pi\,\psi-\pi\)}{\psi}\,d\psi\\
&\sim \frac{6\pi}{E}\,\area\(D_1\cap D_2\)\,\int_{1}^{\delta\sqrt{E}}\,\,\frac{\cos\(8\pi\,\psi-\pi\)}{\psi}\,d\psi\notag\\
&= \frac{3}{E}\,\area\(D_1\cap D_2\)\,\{\[\frac{\sin\(8\pi\,\psi-\pi\)}{\psi}\]_{1}^{\delta\sqrt{E}}+\int_{1}^{\delta\sqrt{E}}\,\frac{\sin\(8\pi\,\psi-\pi\)}{\psi^2}\,d\psi\}\notag\\
&=O\(\frac{1}{E^{3/2}}\)\,,\label{okkk}
\end{flalign}
while
\begin{flalign}
&\frac{6\pi}{E}\,\int_{\delta\sqrt{E}}^{\sqrt{E}\,\diam(D_1 \cap D_2)}\,\area\(D_1\cap D_2^{-\psi/\sqrt{E}}\)\,\frac{\abs{\cos\(8\pi\,\psi-\pi\)}}{\psi}\,d\psi\notag\\
&\leq\frac{6\pi}{E}\,\area\(D_1\cap D_2\)\,\int_{\delta\sqrt{E}}^{\sqrt{E}\,\diam(D_1 \cap D_2)}\,\frac{1}{\psi}\,d\psi\notag\\
&=\frac{6\pi}{E}\,\area\(D_1\cap D_2\)\[\log\(\sqrt{E}\,\diam(D_1 \cap D_2)\)-\log\(\delta\sqrt{E}\)\]\notag\\
&=\frac{6\pi}{E}\,\area\(D_1\cap D_2\)\,\log\(\frac{\diam(D_1 \cap D_2)}{\delta}\)=O\(\frac{1}{E}\)\,,\label{okkk2}
\end{flalign}
which explains why the two oscillating terms in \eqref{c1.1} are negligible (the second oscillating term is treated exactly in the same manner).

\end{oss}

\medskip

\noindent Whenever $\area\(D_1\cap D_2\)>0$, we can proceed in a completely analogous way to obtain the following rates (i.e. applying Proposition \ref{prop2} and splitting the integral as in \eqref{smart_split}), as $E\sto\infty$:
\begin{flalign*}
\Cov\(a_{1,E}(D_1), a_{2,E}(D_2)\)&=24 \,\int_{D_1}\int_{D_2}\,\widetilde{r}_{0,1}^E(x-y)^4\,dx\,dy \sim \area\(D_1\cap D_2\)\,\frac{27\,\log E}{2\,\pi^3\,E}\\
\Cov\(a_{1,E}(D_1), a_{3,E}(D_2)\)&=24 \,\int_{D_1}\int_{D_2}\,\widetilde{r}_{0,2}^E(x-y)^4\,dx\,dy \sim \area\(D_1\cap D_2\)\,\frac{27\,\log E}{2\,\pi^3\,E}\\
\Cov\(a_{1,E}(D_1), a_{4,E}(D_2)\)&=24 \,\int_{D_1}\int_{D_2}\,\widetilde{r}_{0,1}^E(x-y)^2\,\widetilde{r}_{0,2}^E(x-y)^2\,dx\,dy\\
&\sim \area\(D_1\cap D_2\)\frac{9\,\log E}{2\,\pi^3\,E}\\
\Cov\(a_{1,E}(D_1),a_{5,E}(D_2)\)&=24\int_{D_1}\int_{D_2}r^E(x-y)^2\widetilde r_{0,1}^E(x-y)^2\,dxdy\\
&\sim \area(D_1\cap D_2)\frac{3\log E}{\pi^3E},\\
\Cov\(a_{1,E}(D_1),a_{6,E}(D_2)\)&=24\int_{D_1}\int_{D_2}r^E(x-y)^2\widetilde r_{0,2}^E(x-y)^2\,dxdy\\
&\sim \area(D_1\cap D_2)\frac{3\log E}{\pi^3E},\\
\Cov\(a_{2,E}(D_1), a_{2,E}(D_2)\)&=24\int_{D_1}\int_{D_2}\widetilde r_{1,1}^E(x-y)^4 \,dxdy\sim\area(D_1\cap D_2) \frac{315\log E}{8\pi^3E},\\
\Cov\(a_{2,E}(D_1), a_{3,E}(D_2)\)&=24\int_{D_1}\int_{D_2}\widetilde r_{1,2}^E(x-y)^4\,dxdy\sim\area(D_1\cap D_2) \frac{27\log E}{8\pi^3E},\\
\Cov\(a_{2,E}(D_1), a_{4,E}(D_2)\)&=24\int_{D_1}\int_{D_2}\widetilde r_{1,1}^E(x-y)^2\widetilde r_{1,2}^E(x-y)^2\,dxdy\\
&\sim\area(D_1\cap D_2)\frac{45\log E}{8\pi^3E},\\
\Cov\(a_{2,E}(D_1), a_{5,E}(D_2)\)&=24\int_{D_1}\int_{D_2}\widetilde r_{0,1}^E(x-y)^2\widetilde r_{1,1}^E(x-y)^2\,dxdy\\
&\sim\area(D_1\cap D_2)\frac{15\log E}{2\pi^3E},\\
\Cov\(a_{2,E}(D_1), a_{6,E}(D_2)\)&=24\int_{D_1}\int_{D_2}\widetilde r_{0,1}^E(x-y)^2\widetilde r_{1,2}^E(x-y)^2\,dxdy\\
&\sim\area(D_1\cap D_2) \frac{3\log E}{2\pi^3E},\\
\Cov\(a_{3,E}(D_1), a_{3,E}(D_2)\)&=24\int_{D_1}\int_{D_2}\widetilde r_{2,2}^E(x-y)^4\,dxdy\sim\area(D_1\cap D_2)\frac{315\log E}{8\pi^3E},\\
\Cov\(a_{3,E}(D_1), a_{4,E}(D_2)\)&=24 \int_{D_1}\int_{D_2}\widetilde r_{2,2}^E(x-y)^2\widetilde r_{1,2}^E(x-y)^2 \,dxdy\\
&\sim\area(D_1\cap D_2) \frac{45\log E}{8\pi^3E},\\
\Cov\(a_{3,E}(D_1), a_{5,E}(D_2)\)&=24 \int_{D_1}\int_{D_2}\widetilde r_{0,2}^E(x-y)^2\widetilde r_{1,2}^E(x-y)^2\,dxdy\\
&\sim\area(D_1\cap D_2) \frac{3\log E}{2\pi^3E},\\
\Cov\(a_{3,E}(D_1), a_{6,E}(D_2)\)&=24\int_{D_1}\int_{D_2}\widetilde r_{0,2}^E(x-y)^2\widetilde r_{2,2}^E(x-y)^2\,dxdy\\
&\sim\area(D_1\cap D_2) \frac{15\log E}{2\pi^3E},\\
\Cov\(a_{4,E}(D_1), a_{4,E}(D_2)\)&=4\int_{D_1}\int_{D_2}(\widetilde r_{1,1}^E(x-y)^2\widetilde r_{2,2}^E(x-y)^2+\widetilde r_{1,2}^E(x-y)^4\\
&\qquad+4\widetilde r_{1,1}^E(x-y) \widetilde r_{2,2}^E(x-y)\widetilde r_{1,2}^E(x-y)^2)\,dxdy\\
&\sim\area(D_1\cap D_2) \frac{27\log E}{8\pi^3E},\\
\Cov\(a_{4,E}(D_1), a_{5,E}(D_2)\)&=4\int_{D_1}\int_{D_2}(\widetilde r_{0,1}^E(x-y)^2\widetilde r_{1,2}^E(x-y)^2+\widetilde r_{0,2}^E(x-y)^2\widetilde r_{1,1}^E(x-y)^2\\
&\qquad+4\widetilde r_{0,1}^E(x-y)\widetilde r_{0,2}^E(x-y)\widetilde r_{1,1}^E(x-y)\widetilde r_{1,2}^E(x-y))\,dxdy\\
&\sim\area(D_1\cap D_2)\frac{3\log E}{2\pi^3E},\\
\Cov\(a_{4,E}(D_1), a_{6,E}(D_2)\)&=4\int_{D_1}\int_{D_2}(\widetilde r_{0,1}^E(x-y)^2\widetilde r_{2,2}^E(x-y)^2+\widetilde r_{0,2}^E(x-y)^2\widetilde r_{1,2}^E(x-y)^2\\
&\qquad+ 4\widetilde r_{0,1}^E(x-y)\widetilde r_{0,2}^E(x-y)\widetilde r_{2,2}^E(x-y)\widetilde r_{1,2}^E(x-y))\,dxdy\\
&\sim\area(D_1\cap D_2) \frac{3\log E}{2\pi^3E},\\
\Cov\(a_{5,E}(D_1), a_{5,E}(D_2)\)&=4\int_{\mathcal{D}}\int_{\mathcal D}\big(r^E(x-y)^2\widetilde r_{1,1}^E(x-y)^2+\widetilde r^E_{0,1}(x-y)^4\\
&\qquad-4r^E(x-y)\widetilde r^E_{1,1}(x-y)\widetilde r_{0,1}^E(x-y)^2\big)\,dxdy\\
&\sim\area(D_1\cap D_2) \frac{3\log E}{2\pi^3E},\\
\Cov\(a_{5,E}(D_1), a_{6,E}(D_2)\)&= 4\int_{\mathcal{D}}\int_{\mathcal D}(r^E(x-y)^2\widetilde r_{2,2}^E(x-y)^2+\widetilde r^E_{0,2}(x-y)^4\\
&\qquad-4r^E(x-y)\widetilde r_{0,2}^E(x-y)^2 \widetilde r^E_{2,2}(x-y))\,dxdy\\
&\sim\area(D_1\cap D_2) \frac{\log E}{2\pi^3E},\\
\Cov\(a_{6,E}(D_1), a_{6,E}(D_2)\)&=4\int_{\mathcal{D}}\int_{\mathcal D}\big(r^E(x-y)^2\widetilde r_{2,2}^E(x-y)^2+\widetilde r^E_{0,2}(x-y)^4\\
&\qquad- 4r^E(x-y) \widetilde r^E_{2,2}(x-y)\widetilde r_{0,2}^E(x-y)^2\big)\,dxdy\\
&\sim\area(D_1\cap D_2) \frac{3\log E}{2\pi^3E}.
\end{flalign*}
On the other hand, when $\area\(D_1\cap D_2\)=0$, applying Proposition \ref{prop1}, one has that, for all $1\leq i,j\leq 6$ 
$$\Cov\(a_{i,E}(D_1), a_{j,E}(D_2)\)=o\(\frac{\log E}E\)\,.$$
Thus, we just obtained that each term
$$
K_E(D_1)K_E(D_2) \,\Cov\(a_{i,E}(D_1), a_{j,E}(D_2)\)\,, \quad 1\leq i,j\leq 6 \,,
$$ 
converges to a constant, as $E \sto \infty$ (where $K_E(D)$ is defined in \eqref{L_4_norm}). Since each variable $K_E(D_a)a_{i,E}(D_a)$, $a=1,2$, is a member of the fourth Wiener chaos associated with $B_E$ and, as $E\sto \infty$, each of them converges in distribution to a Gaussian random variable, \cite[Theorem 1]{PT:05} implies that the vector $(K_E(D_l)a_{i,E}(D_l) : i=1,...,6,\, l=1,2)$ converges in distribution to a centered Gaussian vector. Moreover, this implies that for any $m\ge1$, also $(\widetilde{\L}_E(D_1),\dots,\widetilde{\L}_E(D_m))$ converges to a Gaussian vector and the covariance structure of our limit object is obtained by a direct computation:
\begin{flalign*}
&\Cov\(\widetilde{\L}_E(D_1),\widetilde{\L}_E(D_2) \)\sim \Cov\(\frac{\L_{E}^{[4]}(D_1)}{\sqrt{\Var\L_{E}^{[4]}(D_1)}},\frac{\L_{E}^{[4]}(D_2)}{\sqrt{\Var\L_{E}^{[4]}(D_2)}}\)\\
&=\frac{\pi^3\,E}{16\,\sqrt{\area(D_1)\area(D_2)}\,\log E}\,\times\\
&\quad \times\,\Cov\Big(8\,a_{1,E}(D_1)-a_{2,E}(D_1)-a_{3,E}(D_1)-2\,a_{4,E}(D_1)-8\,a_{5,E}(D_1)-8\,a_{5,E}(D_1),\\
& \qquad\qquad\quad 8\,a_{1,E}(D_2)-a_{2,E}(D_2)-a_{3,E}(D_2)-2\,a_{4,E}(D_2)-8\,a_{5,E}(D_2)-8\,a_{5,E}(D_2)\Big)\\
&\longrightarrow 
\frac{\area (D_1\cap D_2)}{\sqrt{\area(D_1)\area(D_2)}}\	,
\end{flalign*}
as $E \to \infty$.
\end{proof}

\medskip

\begin{proof}[{\bf Proof of Theorem \ref{thm_L}, complex case}]
Also in this case we know from Theorem \ref{NPR-MR} (suitably extended to the class $\mathscr{A}$ -- see Remark \ref{r:papeete}) that, as $E\sto\infty$,
$$
\widetilde{\NNN}_E(D)\stackrel{d}{\to}Z\sim\NN(0,1)\,,
$$
which is implied by the convergence of the fourth chaotic component, that is the projection of $\NNN_E(D)$ onto the $4$-th Wiener chaos associated with $B_E^\C$ (see Section A.2), i.e.
$$
\frac{\NNN_{E}^{[4]}(D)}{\sqrt{\Var\NNN_{E}^{[4]}(D)}}\stackrel{d}{\to}Z\sim\NN(0,1)\,,
\quad \text{  and the fact that } \quad
\widetilde{\NNN}_E(D)=\frac{\NNN_{E}^{[4]}(D)}{\sqrt{\Var\NNN_{E}^{[4]}(D)}}+o(1)\,,
$$
where once again $o(1)$ indicates a sequence converging to zero in $L^2(\P)$. Moreover, from Proposition \ref{propvar4N}, we have that
\begin{flalign}\label{N_4_norm}
&\frac{\NNN_E^{[4]}(D)}{\sqrt{\Var \NNN_E^{[4]}(D)}}=\underbracket[0.5pt][5pt]{\frac{4\pi^2\,E}{\sqrt{11\,\area (D)\,E\,\log E}}}_{C_E(D)}\,\{a_{E}(D)+\widehat{a}_{E}(D)+b_{E}(D)\}\,.
\end{flalign}
where $a_{E}=a_{E}(D),\,\widehat{a}_{E}=\widehat{a}_{E}(D),\,b_{E}=b_{E}(D)$ are uncorrelated and\footnote{Recall that $\widehat{a}_{E}(D)$ is defined in the same way as $a_{E}(D)$, except for the fact that one uses $\widehat{B}_E$ instead of $B_E$.}
$$
a_E=\frac{1}{8}\{8\,a_{1,E}-a_{2,E}-2a_{3,E}-8\,a_{4,E}\}\,
$$ 
$$
b_E = \{ 2b_{1,E} - b_{2,E} - b_{3,E} -b_{4,E} - b_{5,E} - \frac14 b_{6,E} -\frac14 b_{7,E} + \frac54 b_{8,E} + \frac54 b_{9,E}- 3b_{10,E} \}\,.
$$
In order to prove the convergence of the vector $\big(\widetilde{\NNN}_E(D_1),\dots,\widetilde{\NNN}_E(D_m)\big)$, $D_1, \dots, D_m \in \A$, we want to use once again \cite[Theorem 1]{PT:05}; namely, since we know that also each $C_E(D)b_{i,E}(D)$, $i=1,\dots,10$, is a member of the fourth Wiener chaos associated with $B_E$  and converges to a Gaussian random variable as $E\sto\infty$ (see \cite[Proposition 8.2]{NPR:19}) and since we already showed that $C_E(D_1)C_E(D_2) \,\Cov\(a_{i,E}(D_1), a_{j,E}(D_2)\)$, $1\leq i,j\leq 6$ converge to constants as $E\sto \infty$ (as $C_E(D)= 16\sqrt{\pi}\,K_E(D)/\sqrt{11}$), we just have to prove that also the covariances $C_E(D_1)C_E(D_2) \Cov\(b_{i,E}(D_1), b_{j,E}(D_2)\)$, $1\leq i, j\leq 10$, converge to some constants, as $E\sto \infty$. 

\medskip

Now, it is tedious but easy to show (one has to do analogous computations as for achieving \eqref{1strate}), that, for each $1\leq i \leq j \leq 10$, whenever $\area(D_1\cap D_2)>0$,
$$
\Cov\(b_{i,E}(D_1),b_{j,E}(D_2)\)\sim \frac{n_{i,j}}{64}\,\area (D_1\cap D_2)\frac{\log E}{\pi^3E}\,,\\
$$
where $n_{i,j}=n_{j,i}$ and 
\begin{equation}
n_{i,j}=\begin{cases} 
\,4 & \text{ if } \quad (i,j) \in \{\begin{split} & (2,7), (2,8), (2,9), (2,10), (3,6), (3,8), (3,9), (3,10), \\
&(4,7), (4,8), (4,9), (4,10), (5,6), (5,8), (5,9), (5,10) \end{split}\}\\
\,8 & \text{ if } \quad (i,j) \in \{(1,2), (1,3), (1,4), (1,5)\}\\
\,9 &\text{ if } \quad (i,j) \in \{(6,7), (8,8), (8,9), (8,10), (9,9), (9,10), (10,10)\}\\
\,12 &\text{ if } \quad (i,j) \in \{(1,8), (1,9), (1,10), (2,3), (2,5), (3,4), (4,5)\}\\
\,15 & \text{ if } \quad (i,j) \in \{(6,8),(6,9),(6,10), (7,8), (7,9), (7,10)\}\\
\,20 & \text{ if } \quad (i,j) \in \{(2,6), (3,7), (4,6), (5,7)\}\\
\,24 & \text{ if } \quad (i,j) =(1,1)\\
\,36 & \text{ if } \quad (i,j) \in \{(1,6), (1,7), (2,2), (2,4), (3,3), (3,5),(4,4), (5,5)\}\\
\,105 & \text{ if } \quad (i,j) \in \{(6,6), (7,7)\}\\
\end{cases}\,;
\end{equation}
on the other hand, whenever $\area(D_1\cap D_2)=0$, 
$$
\Cov\(b_{i,E}(D_1),b_{j,E}(D_2)\)=o\(\frac{\log E}{E}\)\,.
$$

Thus, we just obtained that each term
\begin{equation*}
\begin{split}
&C_E(D_1)C_E(D_2) \,\Cov\(a_{i,E}(D_1), a_{j,E}(D_2)\)\,, \quad 1\leq i,j\leq 6 \,, \\
& C_E(D_1)C_E(D_2) \,\Cov\(b_{i,E}(D_1), b_{j,E}(D_2)\)\,, \quad 1\leq i,j\leq 10 \,,
\end{split}
\end{equation*}
converges to a constant, as $E \sto \infty$. 
In conclusion, since each variable $C_E(D_l)a_{i,E}(D_l)$ and $C_E(D_l)b_{i,E}(D_l)$, $l=1,2$, is a member of the fourth Wiener chaos associated with $B_E$ and, as $E\sto \infty$, each of them converges in distribution to a Gaussian random variable, \cite[Theorem 1]{PT:05} implies that the vector $(C_E(D_l)a_{i,E}(D_l), C_E(D_h)b_{j,E}(D_h)  : i=1,\dots,6,\, j=1,\dots,10,\,  l,h=1,2)$ converges in distribution to a centered Gaussian vector. Moreover, this implies that, for any $m\ge1$, also $(\widetilde{\NNN}_E(D_1),\dots, \widetilde{\NNN}_E(D_m))$ converges to a Gaussian vector and the covariance structure of our limit object is obtained by a direct computation:
\begin{flalign*}
&\Cov\(\widetilde{\NNN}_E(D_1),\widetilde{\NNN}_E(D_2)\)\sim\Cov\(\frac{\NNN_{E}^{[4]}(D_1)}{\sqrt{\Var\NNN_{E}^{[4]}(D_1)}},\frac{\NNN_{E}^{[4]}(D_2)}{\sqrt{\Var\NNN_{E}^{[4]}(D_2)}}\)\\
&=\frac{16 \pi^4\,E}{11\,\sqrt{\area(D_1)\area(D_2)}\,\log E}\,\times \\
&\qquad\times\,\Cov\Big(a_{E}(D_1)+\widehat a_{E}(D_1)+b_{E}(D_1),\, a_{E}(D_2)+\widehat a_{E}(D_2)+b_{E}(D_2)\Big)\\
&\longrightarrow 
\frac{\area (D_1\cap D_2)}{\sqrt{\area(D_1)\area(D_2)}}\ ,
\end{flalign*}
as $E \to \infty$.
\end{proof}


\subsection{Proof of Theorem \ref{tightness}}\label{ss:prooftightness}

Recall the definitions of $X_E(t_1,t_2)$ and $Y_E(t_1,t_2)$ from Section 3.2; Theorem \ref{thm_L} straightforwardly implies that $X_E(t_1,t_2)$ and $Y_E(t_1,t_2)$ converge, as $E\sto\infty$ and in the sense of finite-dimensional distributions, to a 2-dimensional Wiener sheet, namely a centered Gaussian process 
$$\mathbf{W}=\{W(t_1,t_2): (t_1,t_2)\in[0,1]^2\}$$ 
with covariance function $E\[W(t_1,t_2)W(s_1,s_2)\]=\(t_1 \wedge s_1\)\,\(t_2 \wedge s_2\)$. Hence, in order to obtain a weak convergence result for $(X_E^{[4]}( \cdot ))_E$ (respectively $(Y_E^{[4]}( \cdot ))_E$), it is enough to prove that the sequence $(X_E^{[4]}( \cdot ))_E$ (respectively $(Y_E^{[4]}( \cdot ))_E$) is tight. We will do it by showing that $X_E^{[4]}(t_1,t_2)$ (respectively $Y_E^{[4]}(t_1,t_2)$) satisfies a Kolmogorov continuity criterion, i.e. that the following holds
\begin{equation}\label{Kol_criterion}
\begin{split}&\E[ (X_E^{[4]}(t_1,t_2) - X_E^{[4]}(s_1,s_2))^a]\leq K \|(t_1,t_2)-(s_1,s_2)\|^{2+b}\\&\E[ (Y_E^{[4]}(t_1,t_2) - Y_E^{[4]}(s_1,s_2))^a]\leq K \|(t_1,t_2)-(s_1,s_2)\|^{2+b}
\end{split}\,, \quad \text{ for some } \quad a,b >0
\end{equation}
and with $K$ an absolute finite constant\footnote{see also \cite[Theorem 2.1]{RY:99}.} ($\norm{\cdot}$ denotes the Euclidean norm on $\R^2$).

Let us start with $X_E^{[4]}(t_1,t_2)$. Without loss of generality (see Remark \ref{WLOG}), 
 assume that $s_1\leq t_1$ and $s_2\leq t_2$, then
\begin{flalign*}
&\E\[\abs{X_E^{[4]}(t_1,t_2)-X_E^{[4]}(s_1,s_2)}^2\]=\E\[\abs{\frac{\L_E^{[4]}([0,t_1]\times[0,t_2])-\L_E^{[4]}([0,s_1]\times[0,s_2])}{\sqrt{\frac{\log E}{512\pi}}}}^2\]	\\
&=\E\[\abs{\frac{\L_E^{[4]}(D_{t,s})}{\sqrt{\frac{\log E}{512\pi}}}}^2\]\leq \frac{\pi^2\,E}{4^2\,\log E}\E\[8^2\abs{a_{1,E}(D_{t,s})}^2+\abs{a_{2,E}(D_{t,s})}^2+\abs{a_{3,E}(D_{t,s})}^2+\right.\\
&\qquad\qquad\qquad\qquad\qquad\qquad\left.+2^2\abs{a_{4,E}(D_{t,s})}^2+8^2\abs{a_{5,E}(D_{t,s})}^2+8^2\abs{a_{5,E}(D_{t,s})}^2\]\,,
\end{flalign*}
where $D_{t,s}:=[0,t_1]\times[0,t_2]\setminus[0,s_1]\times[0,s_2]$.

Set $t:=(t_1,t_2)$ and $s:=(s_1,s_2)$. In the sequel, the letter $c$ will denote any positive constant that depends neither on $t,s$ nor on $E$. 
Thanks to the {\it diagram formula} (see \cite[Proposition 4.15]{MP:11}) and adapting Propositions \ref{prop1} and \ref{prop2} for $D_1=D_2=D_{t,s}$ (see Remark \ref{oss-tight}), we have
\begin{flalign*} 
\E\[\abs{a_{1,E}(D_{t,s})}^2\] &=\E\[\abs{\int_{D_{t,s}}\,H_4\(B_E(x)\)dx}^2\]=\int_{D_{t,s}^2}\,\E\[H_4\(B_E(x)\)H_4\(B_E(y)\)\]\,dx\, dy\\
&= 6 \,\int_{D_{t,s}^2}\, \E\[B_E(x)B_E(y)\]^4 \,dx\, dy= 6 \,\int_{D_{t,s}^2}\, r^E(x-y)^4\,dx\, dy\\
&\le c\, \area\(D_{t,s}\) \frac{\log E}{E} \leq c\,\norm{t-s} \frac{\log E}{E}\,.
\end{flalign*}
\begin{oss}\label{oss-tight}
Recall the proof of Proposition \ref{prop1}; using the coarea formula we have, for any $t,s \in [0,1]^2$
\begin{flalign*} 
&\int_{D_{t,s}^2}\, r^E(x-y)^4\,dx\, dy=\int_0^{\diam(D_{t,s})}\phi \, d\phi\int_{D_{t,s}^{-\phi}}dx\int_0^{2\pi}r^E(\phi\cos\theta,\phi\sin\theta)^4d\theta\\
&\quad+\int_0^{\diam(D_{t,s})}\phi \, d\phi\int_{D_{t,s}\setminus D_{t,s}^{-\phi}}dx\int_{\partial B_\phi(x)\cap D_{t,s}}r^E(\phi\cos\theta,\phi\sin\theta)^4d\theta\\
&\le\frac{1}{E}\int_0^{\sqrt E\diam(D_{t,s})}\psi \, d\psi\int_{D_{t,s}^{-\psi/\sqrt E}}dx\int_0^{2\pi}r^1(\psi\cos\theta,\psi\sin\theta)^4d\theta\\
&\quad+\frac{1}{E}\int_0^{\sqrt E\diam(D_{t,s})}\psi \, d\psi\int_{D_{t,s}\setminus D_{t,s}^{-\psi/\sqrt E}}dx\int_0^{2\pi}r^1(\psi\cos\theta,\psi\sin\theta)^4d\theta\\
&\le\frac{\area(D_{t,s})}{E}\int_0^{\sqrt E\diam(D_{t,s})}\psi \, d\psi \int_0^{2\pi}r^1(\psi\cos\theta,\psi\sin\theta)^4d\theta\\
&\le \frac{c \,\area(D_{t,s})}{E}\(\int_0^1\psi \,d\psi+\int_1^{\sqrt E\diam(D_{t,s})}\frac 1 \psi \, d\psi \) \le c \,\area(D_{t,s}) \, \frac{\log E}{E} \,,
\end{flalign*}
where we used once again the asymptotic relations for Bessel functions \eqref{asymp_zero} and \eqref{asymp_infinity}.
\end{oss}

Consequently, using the hypercontractivity property of functionals living in a fixed Wiener-chaos (see \cite[Theorem 2.7.2]{NP:12}), we have that
$$
\frac{64\,\pi^6\,E^3}{(\log E)^3}\E\[\abs{a_{1,E}(D_{t,s})}^6\]\leq c\, \norm{t-s}^3
\,.
$$
Moreover, one can prove in an analogous way that
$$
\frac{64\,\pi^6\,E^3}{(\log E)^3}\E\[\abs{a_{i,E}(D_{t,s})}^6\]\leq c\, \norm{t-s}^3
\,, 
$$
for each $i=2,3,4,5,6$. Therefore, we obtain that
\begin{flalign}\label{KOL_4th}
&\E\[\abs{X_E^{[4]}(t)-X_E^{[4]}(s)}^6\] \leq c \,\norm{t-s}^3
\end{flalign}
and hence that $X_E^{[4]}(t)$ satisfies  \eqref{Kol_criterion}, with $a=6$ and $b=1$. 
Thanks to the Kolmogorov continuity criterion for tightness, we just showed that $X_E^{[4]}(t)$ is tight. Showing the tightness of $Y_E^{[4]}(t)$ is completely analogous and it is left to the interested reader.
 
\endmproof

\begin{oss}\label{WLOG}
The reason why, taking $s_1\leq t_1$ and $s_2\leq t_2$, we do not loose any generality relies simply on the fact that the fourth chaotic component $X_E^{[4]}(t_1,t_2)$ is an integral over a domain $D_t:=[0,t_1]\times [0,t_2]$ and hence one can use additivity. More specifically, assume instead that $s_1\leq t_1$ but $s_2\geq t_2$, then 
$$
D_t=[0,s_1]\times [0,t_2]+[s_1,t_1]\times [0,t_2]
$$
and
$$
D_s=[0,s_1]\times [0,t_2]+[0,s_1]\times [t_2,s_2]\,.
$$
Consequently, doing analogous computations as the ones we used to reach equation \eqref{KOL_4th}, we have that 
\begin{flalign*}
&\E\[\abs{X_E^{[4]}(t_1,t_2)-X_E^{[4]}(s_1,s_2)}^6\]=\E\[\abs{\frac{\L_E^{[4]}\([s_1,t_1]\times [0,t_2]\)-\L_E^{[4]}\([0,s_1]\times [t_2,s_2]\)}{\sqrt{\frac{\log E}{512\pi}}}}^6\]\\
&\leq 2^6 \(\frac{512\pi}{\log E}\)^3 E\[\abs{\L_E^{[4]}\([s_1,t_1]\times [0,t_2]\)}^6+\abs{\L_E^{[4]}\([0,s_1]\times [t_2,s_2]\)}^6\]\\
&\leq c \{\area\([s_1,t_1]\times [0,t_2]\)^3+\area\([0,s_1]\times [t_2,s_2]\)^3\}\\
&= c \, \{\[(t_1-s_1)t_2\]^3+\[(s_2-t_2)s_1\]^3\}\leq c \, \{\abs{t_1-s_1}^3+\abs{s_2-t_2}^3\} \le c \norm{t-s}^3\,.
\end{flalign*}
\end{oss}


\appendix

\section{Ancillary results from \cite{NPR:19} and more}\label{appA}

\subsection{Covariances}

In \cite[Lemma 3.1]{NPR:19}, the authors computed the distribution of the Gaussian vector $(B_E(x), B_E(y), \nabla B_E(x), \nabla B_E(y))\in \R^6$ for $x, y\in \R^2$, where $\nabla B_E$ is the gradient field $\nabla := (\partial_1, \partial_2), \partial_i := \partial_{x_i} = \partial/\partial {x_i}$ for $i=1,2$. For $i,j\in \lbrace 0,1,2 \rbrace$ define
\begin{equation}\label{notationDerivative}
r^E_{i,j}(x-y) := \partial_{x_i} \partial_{y_j} r^E(x-y),
\end{equation}
with $\partial_{x_0}$ and $\partial_{y_0}$ equal to the identity by definition.

\medskip 

\begin{lem}[{\cite[Lemma 3.1]{NPR:19}}]\label{lemsmooth}
The centered Gaussian vector
$$
(B_E(x), B_E(y), \nabla B_E(x), \nabla B_E(y))\in \R^6\,, \quad x\neq y\in \R^2\,,
$$  
has the following covariance matrix:
\begin{equation}\label{sigma}
\Sigma^E (x-y)= \begin{pmatrix}
\Sigma_{1}^E(x-y) &\Sigma_{2}^E(x-y)\\
\Sigma_{2}^E(x-y)^t &\Sigma_{3}^E(x-y)
\end{pmatrix},
\end{equation}
where 
$$
\Sigma_1^E(x-y) = \begin{pmatrix}
1 &r^E(x-y)\\
r^E(x-y) &1
\end{pmatrix},
$$
$r^E$ being defined in \eqref{covE}, 
\begin{equation}\label{matrixB}
\Sigma_2^E(x-y) = \begin{pmatrix}
0 &0 &r_{0,1}^E(x-y) &r_{0,2}^E(x-y)\\
-r_{0,1}^E(x-y) &-r_{0,2}^E(x-y) &0 &0
\end{pmatrix},
\end{equation}
with, for $i=1,2$,
\begin{equation*}
r_{0,i}^E(x-y) =
2\pi\sqrt{E} \,\frac{x_i-y_i}{\|x-y\|}\,  J_1(2\pi\sqrt{E}\|x-y\|).
\end{equation*}
Finally
$$
\Sigma_3^E(x-y) = \begin{pmatrix}
2\pi^2E  &0  &r^E_{1,1}(x-y) &r^E_{1,2}(x-y)\\
0 &2\pi^2 E &r^E_{2,1}(x-y) &r^E_{2,2}(x-y)\\
r^E_{1,1}(x-y) &r^E_{2,1}(x-y) &2\pi^2E &0\\
r^E_{1,2}(x-y) &r^E_{2,2}(x-y) &0 &2\pi^2E
\end{pmatrix},
$$
where for $i=1,2$
\begin{equation}\label{covii}
r^E_{i,i}(x-y)=  2\pi^2 E \left ( J_0(2\pi\sqrt{E}\|x-y\|) 
 + \Big (1 - 2\frac{(x_i - y_i)^2}{\|x-y\|^2}  \Big ) J_2(2\pi\sqrt{E}\|x-y\|) \right),
\end{equation}
and 
\begin{equation}\label{cov12}
r_{12}^E(x-y) = r^E_{2,1}(x-y)= -4\pi^2 E \frac{(x_1 - y_1)(x_2 - y_2)}{\|x - y\|^2} J_2(2\pi\sqrt{E}\|x-y\|).
\end{equation}
\end{lem}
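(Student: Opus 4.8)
The plan is to reduce the whole statement to a direct computation of partial derivatives of the scalar covariance $r^E(x-y)=J_0(2\pi\sqrt E\|x-y\|)$. The starting point is the standard fact that, for a centred Gaussian field, covariances among the field and its derivatives are obtained by differentiating the kernel under the expectation. Since the series representation \eqref{serie} shows that $B_E$ is almost surely real analytic (hence mean-square differentiable of every order), one may exchange derivatives and expectation to obtain, with the convention $\partial_{x_0}=\partial_{y_0}=\mathrm{Id}$,
$$
\E[\partial_{x_i}B_E(x)\,\partial_{y_j}B_E(y)]=\partial_{x_i}\partial_{y_j}r^E(x-y)=r^E_{i,j}(x-y).
$$
This identifies every entry of $\Sigma^E(x-y)$ as a value of some $r^E_{i,j}$: the block $\Sigma_1^E$ is immediate (off-diagonal $r^E(x-y)$, and on-diagonal $r^E(0)=J_0(0)=1$), so the task is to evaluate the first- and second-order derivatives making up $\Sigma_2^E$ and $\Sigma_3^E$.

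First I would record the Bessel identities to be used: $J_0'=-J_1$, the derivative relation $2J_1'=J_0-J_2$, and the recurrence $\tfrac{2}{u}J_1(u)=J_0(u)+J_2(u)$. Writing $\rho=\|x-y\|$, $k=2\pi\sqrt E$ and $z=x-y$, the chain rule with $J_0'=-J_1$ gives
$$
r^E_{0,i}(x-y)=\partial_{y_i}J_0(k\rho)=k\,\frac{x_i-y_i}{\rho}\,J_1(k\rho),
$$
which is exactly the stated $r^E_{0,i}$; the structure of $\Sigma_2^E$ then follows from the relation $r^E_{i,0}=-r^E_{0,i}$ (because $r^E$ depends only on $x-y$) and from $\rho^{-1}J_1(k\rho)\to k/2$ as $\rho\to0$, which forces the diagonal-type entries $r^E_{0,i}(0)$ to vanish.

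Differentiating once more in $x_i$ and inserting $\partial_{x_i}\bigl(\rho^{-1}J_1(k\rho)\bigr)=-k\rho^{-2}z_iJ_2(k\rho)$ — the point where the two $J_1$-identities collapse the combination $\tfrac{k\rho}{2}(J_0-J_2)-J_1$ to $-k\rho J_2$ — I would arrive at the clean expression
$$
r^E_{i,j}(x-y)=\frac{k^2}{2}\,\delta_{ij}\bigl(J_0(k\rho)+J_2(k\rho)\bigr)-\frac{k^2\,z_iz_j}{\rho^2}\,J_2(k\rho),
$$
from which \eqref{covii} and \eqref{cov12} follow after setting $k^2=4\pi^2E$ and separating $i=j$ from $i\neq j$. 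For the coincidence locus $x=y$ (the variance $2\pi^2E$ along the diagonal of $\Sigma_3^E$ and the vanishing of $\E[\partial_1B_E(x)\partial_2B_E(x)]$) I would bypass the indeterminate factor $z_iz_j/\rho^2$ by reading off the Taylor expansion $r^E(z)=1-\tfrac{k^2}{4}\|z\|^2+O(\|z\|^4)$: the second-order coefficient gives $-\partial_{z_i}^2r^E(0)=k^2/2=2\pi^2E$, and the absence of a $z_1z_2$ term gives the vanishing mixed derivative.

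The only genuinely delicate point is the justification of differentiation under the expectation and the mean-square differentiability of $B_E$ up to second order; this is precisely where the almost sure and locally uniform convergence of \eqref{serie}, and the resulting analyticity, are essential. I would state this step carefully, whereas the subsequent Bessel-function bookkeeping is entirely routine.
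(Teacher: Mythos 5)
Your proposal is correct and is essentially the standard argument: the paper itself states this lemma without proof, importing it verbatim from \cite[Lemma 3.1]{NPR:19}, and the computation behind that cited result is exactly the one you give --- differentiating the kernel $r^E(x-y)=J_0(2\pi\sqrt{E}\,\|x-y\|)$ under the expectation and reducing via the Bessel identities $J_0'=-J_1$, $2J_1'=J_0-J_2$ and $2J_1(u)/u=J_0(u)+J_2(u)$. Your derivative formulas reproduce \eqref{covii} and \eqref{cov12} exactly, your sign convention $r^E_{i,0}=-r^E_{0,i}$ accounts for the structure of \eqref{matrixB}, and your treatment of the coincidence point $x=y$ through the Taylor expansion of $J_0$ correctly yields the diagonal entries $2\pi^2E$ and the vanishing mixed covariance.
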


\bigskip 

Let us also define, for $k,l\in \lbrace 0,1,2\rbrace$, 
$$
\widetilde r^E_{k,l}(x,y) = \widetilde r^E_{k,l}(x-y) := \E \left [\widetilde \partial_k B_E(x) \widetilde \partial_l B_E(y) \right ],\qquad x,y\in \R^2,
$$
with $\widetilde \partial_0 B_E := B_E$, where we define the normalized derivatives as 
\begin{equation}\label{normalized}
\widetilde \partial_i := \frac{\partial_i}{\sqrt{2\pi^2E}},\qquad i=1,2\,,
\end{equation}
and accordingly the normalized gradient $\widetilde \nabla$ as 
\begin{equation}\label{normgrad}
\widetilde \nabla := (\widetilde \partial_1, \widetilde \partial_2) = \frac{\nabla}{\sqrt{2\pi^2E}}\,.
\end{equation}

\medskip

One has the following uniform estimate for Bessel functions:
As $\phi \to \infty$, 
\begin{eqnarray}\label{asymp_infinity}
\nonumber r^E((\phi \cos\theta, \phi\sin\theta))  &= & \underbracket[0.5pt][5pt]{\frac{1}{\pi\sqrt{\sqrt E \phi}}\cos\(2\pi \sqrt E\phi-\frac{\pi}{4}\)}_{=: h^E(\theta)g^E(\phi)} + O\left (\frac{1}{E^{3/4}\phi \sqrt \phi }    \right )\\
\nonumber \widetilde r^E_{0,1}((\phi \cos\theta, \phi\sin\theta)) &=& \underbracket[0.5pt][5pt]{\frac{\sqrt{2}\cos\theta}{\pi\sqrt{\sqrt E\phi}}\sin \(2\pi \sqrt E\phi-\frac{\pi}{4} \)}_{=: h^E_{0,1}(\theta)g^E_{0,1}(\phi)} + O\left (\frac{1}{E^{3/4}\phi \sqrt \phi }    \right )\\
\widetilde r^E_{0,2}((\phi \cos\theta, \phi\sin\theta)) &=& \underbracket[0.5pt][5pt]{\frac{\sqrt{2}\sin \theta}{\pi\sqrt{\sqrt E \phi}}\sin\(2\pi \sqrt E\phi-\frac{\pi}{4}\)}_{=: h^E_{0,2}(\theta) g^E_{0,2}(\phi)} + O\left (\frac{1}{E^{3/4}\phi \sqrt \phi }    \right )\\
\nonumber \widetilde r^E_{1,1}((\phi \cos\theta, \phi\sin\theta)) &=& \underbracket[0.5pt][5pt]{\frac{2\cos^2\theta}{\pi\sqrt{\sqrt E\phi}}\cos\(2\pi \sqrt E\phi-\frac{\pi}{4}\)}_{=: h^E_{1,1}(\theta)g^E_{1,1}(\phi)}+ O\left (\frac{1}{E^{3/4}\phi \sqrt \phi }    \right )\\
\nonumber \widetilde r^E_{2,2}((\phi \cos\theta, \phi\sin\theta)) &=& \underbracket[0.5pt][5pt]{\frac{2\sin^2\theta}{\pi\sqrt{\sqrt E\phi}}\cos\(2\pi \sqrt E\phi-\frac{\pi}{4}\)}_{=:h^E_{2,2}(\theta) g^E_{2,2}(\phi)}+ O\left (\frac{1}{E^{3/4}\phi \sqrt \phi }    \right )\\
\nonumber \widetilde r^E_{1,2}((\phi \cos\theta, \phi\sin\theta)) &=&\underbracket[0.5pt][5pt]{\frac{2\cos\theta\sin\theta}{\pi\sqrt{\sqrt E\phi}}\cos\(2\pi \sqrt E\phi-\frac{\pi}{4}\)}_{=: h^E_{1,2}(\theta)g^E_{1,2}(\phi)} + O\left (\frac{1}{E^{3/4}\phi \sqrt \phi }    \right ),
\end{eqnarray}
uniformly on $(\phi, \theta)$, where the constants involved in the $O$-notation do not depend on $E$. 
As $\psi \to 0$,
\begin{flalign}\label{asymp_zero}
&r^1(\psi \cos \theta, \psi \sin \theta)\to 1\,, \qquad \widetilde r^1_{0,i}(\psi \cos \theta, \psi \sin \theta)= O(\psi)\,,\notag\\
&\widetilde r^1_{i,i}(\psi \cos \theta, \psi \sin \theta)\to 1\,,\qquad \widetilde r^1_{1,2}(\psi \cos \theta, \psi \sin \theta)=O(\psi^2)\,,
\end{flalign}
uniformly on $\theta$, for $i=1,2$.

\medskip

\begin{oss}\label{integral_repr}
It is important to stress that the planar random waves can be \emph{formally} represented as a stochastic integral with respect to a Gaussian random measure $W$, in the following way
\begin{equation}\label{rw_SI}
B_E(x)= \int_0^\pi \, f_E(x,t) \, dW(t)= I_1\(f_E(x,\cdot)\)\,,
\end{equation} 
where $f_E$ is chosen in such a way that
\begin{flalign*}
\E \[B_E(x)B_E(y)\] &= J_0(2\pi\sqrt{E}\norm{x-y})\\
&= \int_0^\pi \,\cos\(2\pi\sqrt{E}\norm{x-y}\,\sin t\)\,dt = \int_0^\pi \,f_E(x,t)\,f_E(y,t)\,dt\,.
\end{flalign*}
\end{oss}

\subsection{Chaos}

We refer the reader to \cite[Chapter 2]{NP:12} and \cite[Chapter 5]{PT:10} for a self-contained introduction to Wiener chaos. The next result contains an explicit description of the chaotic expansions of $\L_E(z):=\text{length} (B_E^{-1}(z)\cap D)$ and $\NNN_E(z) := \#\(\(B_E^\C\)^{-1}(z)\cap D\)$, $z\in\R$.

\medskip

\begin{prop} The chaotic expansion of the level curve length in $D$ is 
\begin{equation}\label{expL}
\begin{split}
\L_E(z) = \sum_{q=0}^{+\infty} \L_E^{[q]}(z) 
=&\sqrt{2\pi^2E} \sum_{q=0}^{+\infty} \sum_{u=0}^{q} \sum_{m=0}^{u} \beta_{q - u}(z) \alpha_{m, u - m}\times \cr
&\times \int_{D} H_{q-u}(B_E(x)) H_{m}(\widetilde \partial_1 B_E(x)) H_{u-m}(\widetilde \partial_2 B_E(x))\,dx,
\end{split}
\end{equation}
where $\lbrace \beta_{n}(z)\rbrace_{n\ge 0}$ are the formal coefficients of the chaotic expansion of $\delta_z$ (see Remark \ref{formal_beta}), while $\lbrace \alpha_{n,m}\rbrace_{n,m\ge 0}$ is the sequence of chaotic coefficients of the Euclidean norm in $\R^2$ $\| \cdot \|$ appearing in \cite[Lemma 3.5]{MPRW:16}. Here, the symbol $ \L_E^{[q]}(z) $ indicates the projection of $\L_E(z)$ onto the $q$th Wiener chaos associated with $B_E$, as defined in \cite[Section 2.2]{NP:12}.

For the number of level points in $D$ we have
\begin{equation}\label{expN}
\begin{split}
&\NNN_E(z) = \sum_{q=0}^{+\infty} \NNN_E^{[q]}(z) = 2\pi^2E  \sum_{q=0}^{+\infty}  \sum_{i_1+i_2+i_3+j_1+j_2+j_3=q} \beta_{i_1}(z) \beta_{j_1}(z)\,\zeta_{i_2,i_3,j_2,j_3}\cr
&\int_{D} H_{i_1}(B_E(x)) H_{j_1}(\widehat B_E(x))  H_{i_2}(\widetilde \partial_1 B_E(x)) H_{i_3}(\widetilde \partial_2 B_E(x))H_{j_2}(\widetilde \partial_1 \widehat B_E(x)) H_{j_3}(\widetilde \partial_2 \widehat B_E(x))\,dx,
\end{split}
\end{equation}
where $i_2, i_3, j_2, j_3$ have the same parity; here the sequence $\lbrace \zeta_{i_2,i_3,j_2,j_3}\rbrace$ corresponds to the chaotic expansion of the absolute value of the Jacobian appearing in \cite[Lemma 4.2]{DNPR:19}. Here, the symbol $ \NNN_E^{[q]}(z) $ indicates the projection of $\NNN_E(z)$ onto the $q$th Wiener chaos associated with $B^{\mathbb C}_E$, as defined in \cite[Section 2.2]{NP:12}.
\end{prop}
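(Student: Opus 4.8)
The plan is to start from the Kac--Rice (coarea) representation of the two geometric quantities and then to expand each nonlinear factor into Hermite polynomials, exploiting the pointwise independence of the field and its gradient. For the level length I would write, in the sense of generalised functions,
$$
\L_E(z)=\int_D \delta_z\big(B_E(x)\big)\,\norm{\nabla B_E(x)}\,dx,
$$
while for the number of level points of the complex field
$$
\NNN_E(z)=\int_D \delta_z\big(B_E^{\C}(x)\big)\,\abs{\det \nabla B_E^{\C}(x)}\,dx,
$$
where $\nabla B_E^{\C}$ is the $2\times 2$ Jacobian matrix of $(B_E,\wh B_E)$ and $\delta_z$ the one- (resp. two-) dimensional Dirac mass. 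Both identities are made rigorous by replacing $\delta_z$ with the normalised indicator $\tfrac{1}{2\e}\1_{[z-\e,z+\e]}$ (resp. its planar analogue) and passing to the limit $\e\downarrow 0$ in $L^2(\P)$; the admissibility of this limit rests on the almost sure non-degeneracy of $B_E$ (resp. $B_E^{\C}$) on its level set, established by a Bulinskaya-type argument exactly as in \cite{NPR:19, DNPR:19}.

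Next I would expand the three building blocks separately. The Dirac mass admits the formal Hermite series $\delta_z(y)=\sum_{n\ge 0}\beta_n(z)H_n(y)$, with the coefficients $\beta_n(z)$ recalled in Remark~\ref{formal_beta}; the Euclidean norm on $\R^2$ has the expansion $\norm{(v_1,v_2)}=\sum_{n,m\ge 0}\alpha_{n,m}H_n(v_1)H_m(v_2)$ with the $\alpha_{n,m}$ of \cite[Lemma~3.5]{MPRW:16}; and the absolute Jacobian determinant is governed by the coefficients $\zeta_{i_2,i_3,j_2,j_3}$ of \cite[Lemma~4.2]{DNPR:19}, which are known to vanish unless $i_2,i_3,j_2,j_3$ share the same parity. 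Since every first order derivative $\partial_i B_E(x)$ has variance $2\pi^2E$, I would insert the normalised derivatives $\wt\partial_i$ of \eqref{normalized}--\eqref{normgrad}, so that $\norm{\nabla B_E(x)}=\sqrt{2\pi^2E}\,\norm{\wt\nabla B_E(x)}$ and $\abs{\det\nabla B_E^{\C}(x)}=2\pi^2E\,\abs{\det\wt\nabla B_E^{\C}(x)}$; this produces precisely the scalar prefactors $\sqrt{2\pi^2E}$ and $2\pi^2E$ appearing in \eqref{expL} and \eqref{expN}.

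The decisive structural input is Lemma~\ref{lemsmooth} evaluated on the diagonal: since $r^E(0)=1$, $r^E_{0,i}(0)=0$, $r^E_{1,2}(0)=0$ and $r^E_{i,i}(0)=2\pi^2E$, at every fixed $x$ the triple $(B_E(x),\wt\partial_1B_E(x),\wt\partial_2B_E(x))$ consists of independent standard Gaussians (and, adjoining the independent copy $\wh B_E$, the six coordinates entering the complex case are independent standard Gaussians as well). Consequently the Hermite polynomials in these distinct arguments are mutually orthogonal, the chaotic expansion of the product $\delta_z(B_E(x))\,\norm{\wt\nabla B_E(x)}$ is simply the product of the individual series, and the total chaos order of a term equals the sum of its Hermite indices. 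Grouping the resulting terms according to their total degree $q$ — writing, in the real case, $u$ for the combined degree carried by the two gradient components — then identifies the projection $\L_E^{[q]}(z)$ (respectively $\NNN_E^{[q]}(z)$, with $i_2,i_3,j_2,j_3$ constrained to equal parity by the vanishing pattern of $\zeta$), and integrating over $D$ yields \eqref{expL} (respectively \eqref{expN}).

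I expect the only genuinely delicate point to be the rigorous justification of the Kac--Rice step, namely the $L^2(\P)$-convergence of the $\e$-regularisation together with the legitimacy of interchanging this limit with the infinite chaos sum and with the integral over $D$. This is handled as in \cite{NPR:19, DNPR:19} by bounding the second moments of the approximating functionals uniformly in $\e$ (using the boundedness of the joint density of $(B_E(x),\nabla B_E(x))$ away from degeneracy) and invoking dominated convergence chaos-by-chaos. The passage to a general $D\in\A$ is then immediate, since $D$ enters only as the domain of integration and does not affect the pointwise expansions.
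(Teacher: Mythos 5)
Your proposal is correct and follows essentially the same route as the source of this statement: the paper itself does not prove the Proposition but imports it from \cite{NPR:19} (real case) and \cite{DNPR:19} (complex case), and those proofs proceed exactly as you outline --- Kac--Rice/coarea representation with an $\eps$-regularised Dirac mass, $L^2(\P)$ convergence justified by non-degeneracy, Hermite expansion of $\delta_z$, of the norm and of the absolute Jacobian, and the pointwise independence of $(B_E(x),\widetilde\partial_1 B_E(x),\widetilde\partial_2 B_E(x))$ (from the diagonal values $r^E_{0,i}(0)=r^E_{1,2}(0)=0$, $r^E_{i,i}(0)=2\pi^2E$) to multiply the series and regroup by total degree. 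The only point worth double-checking against the paper's conventions is the meaning of the level $z$ in the complex case (the expansion \eqref{expN} carries $\beta_{i_1}(z)\beta_{j_1}(z)$, i.e.\ both coordinates at level $z$), but this is a notational matter and immaterial for the nodal case $z=0$ used throughout.
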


\medskip

\begin{oss}\label{formal_beta}
The coefficients $\beta_l$ are defined as the limit, as $\eps\rightarrow 0$, of $\beta^\eps_l:=\frac{1}{l!}\eta_l^\eps(z)$, where 
$$
\frac{1}{2\eps}\1_{[z-\eps,z+\eps]}(\cdot)=\sum_{l=0}^\infty\,\frac{1}{l!}\,\eta_l^\eps(z)\,H_l(\cdot)\,.
$$
In \cite[Proposition 7.2.2]{Ro:15}, it is shown that
\begin{flalign}
\eta_{n}(z)&=\lim_{\eps\to 0} \frac{1}{2\eps} \int_{z-\eps}^{z+\eps} \gamma(t) H_{n}(t)    \,dt=\lim_{\eps\to 0}
\frac{1}{2\eps} \int_{z-\eps}^{z+\eps} \gamma(t) (-1)^{n} \gamma^{-1}(t) \frac{d^{n}}{dt^{n}} \gamma(t)  \,dt \notag\\
&=\lim_{\eps\to 0}\frac{(-1)^n}{2\eps} \int_{z-\eps}^{z+\eps} \frac{d^{n}}{dt^{n}} \gamma(t)  \,dt =\gamma(z)\,H_{n}(z).
\end{flalign}
with $\gamma$ the standard Gaussian density on $\R$ and
\begin{equation}  \label{coeff_alfa}
\alpha_{n,n-m}=\frac{1}{2\pi\, (n)!\,(n-m)!} \int_{\R^2} \sqrt{y^2 + z^2} \, H_{n}(y)
H_{n-m}(z) \mathrm{e}^{-\frac{y^2+z^2}{2}}\,dy dz\,,
\end{equation}
where \eqref{coeff_alfa} vanishes whenever $n$ or $n-m$ is odd.
In \cite{DNPR:19}, it is shown that 
$$
\zeta_{a,b,c,d}=\frac{1}{a!\,b!\,c!\,d!\,}\,\,\E\[\abs{XY-ZW}\,H_a(X)H_b(Y)H_c(Z)H_d(W)\]\,,
$$
where $(X, Y, V, W)$ is a standard real four-dimensional Gaussian vector.
\end{oss}

\medskip

In particular, we have
\begin{equation}\label{fewbeta}
\begin{split}
&\beta_0(z)=\gamma(z)H_0(z)=\gamma(z),\quad \beta_1(z)=\gamma(z)H_1(z)=\gamma(z)\,z,\\
&\beta_2(z)=\frac12\,\gamma(z)H_2(z)=\frac12\,\gamma(z)(z^2-1), \quad \beta_3(z)= \frac16\,\gamma(z)H_3(z)=\frac16\,\gamma(z)(z^3-3z), \\  
&\beta_4=\frac1{24}\,\gamma(z)H_4(z)=\frac1{24}\,\gamma(z)(z^4-6z^2+3)\,,
\end{split}
\end{equation}
\begin{flalign}\label{fewalpha}
&\alpha_{0,0}=\frac{\sqrt{2\pi}}2,\quad 
\alpha_{2,0}=\alpha_{0,2}=\frac{\sqrt{2\pi}}8,\quad 
\alpha_{4,0}=\alpha_{0,4}=-\frac{\sqrt{2\pi}}{128},\quad 
\alpha_{2,2}=-\frac{\sqrt{2\pi}}{64}
\end{flalign}
and 
\begin{equation}\label{fewgamma}
\begin{split}
\zeta_{0,0,0,0}&=1,\quad 
\zeta_{2,0,0,0}=\zeta_{0,2,0,0}=\zeta_{0,0,2,0}=\zeta_{0,0,0,2}=\frac14,\\
\zeta_{1,1,1,1}&=-\frac38,\quad 
\zeta_{2,2,0,0}=\zeta_{0,0,2,2}=-\frac1{32},\\
\zeta_{2,0,2,0}&=\zeta_{0,2,0,2}=-\frac1{32},\quad 
\zeta_{2,0,0,2}=\zeta_{0,2,2,0}=\frac{5}{32},\\
\zeta_{4,0,0,0}&=\zeta_{0,4,0,0}=\zeta_{0,0,4,0}=\zeta_{0,0,0,4}=-\frac3{192}.
\end{split}
\end{equation}
Note that, when $z=0$, the odd-chaoses vanish.

\bigskip

Once the chaotic expansions were established, the authors of \cite{NPR:19} proved that, as $E\sto +\infty$ (see \cite[Equation (2.29)]{NPR:19})
\begin{equation*}
\frac{\L_E - \E[\L_E]}{\sqrt{\Var(\L_E)}} = \frac{\L_E^{[4]}}{\sqrt{\Var(\L_E^{[4]})}} + o_{\P}(1), \qquad \frac{\NNN_E - \E[\NNN_E]}{\sqrt{\Var(\NNN_E)}} = \frac{\NNN_E^{[4]}}{\sqrt{\Var(\NNN_E^{[4]})}} + o_{\P}(1)
\end{equation*}
using the following results (and in particular that $\Var \L_E \sim \Var \L_E^{[4]}$).

\medskip

\begin{lem}[{\cite[Lemma 4.1 and 4.2]{NPR:19}}]\label{2_chaos_var}
We have 
\begin{equation}\label{newsecondchaos}
\L_E^{[2]}  = \frac1{8\pi\sqrt{2\,E}}
\int_{\partial D} B_E(x)\langle \nabla B_E(x),n(x)\rangle dx,
\end{equation}
where $n(x)$ is the outward pointing normal at $x$, and hence 
\begin{equation}\label{var2L}
\Var(\L_E^{[2]}) = O(1).
\end{equation}
Moreover, 
\begin{equation}\label{newsecondchaosN}
\NNN_E^{[2]}  = \sqrt{2E}\big(\L_E^{[2]}+\widetilde{\L}_E[2]\big)
\end{equation}
and hence 
\begin{equation}\label{var2N}
\Var(\NNN_E^{[2]}) = O(E).
\end{equation}
\end{lem}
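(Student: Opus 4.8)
The plan is to establish the two identities \eqref{newsecondchaos} and \eqref{newsecondchaosN} by isolating the degree-two terms in the chaotic expansions \eqref{expL}--\eqref{expN}, and then to read off the variance bounds \eqref{var2L}--\eqref{var2N} from a Wick-type covariance computation combined with the Bessel asymptotics \eqref{asymp_infinity}.

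For \eqref{newsecondchaos} I would specialise \eqref{expL} to $z=0$ and keep only the summands with total Hermite degree $2$, i.e. $q=2$. Using $\beta_0(0)=\gamma(0)$, $\beta_1(0)=0$, $\beta_2(0)=-\tfrac12\gamma(0)$ from \eqref{fewbeta} and the vanishing of $\alpha_{n,m}$ whenever $n$ or $m$ is odd (so that only $\alpha_{0,0},\alpha_{2,0},\alpha_{0,2}$ from \eqref{fewalpha} survive), the projection collapses to a linear combination of $\int_D H_2(B_E)\,dx$, $\int_D H_2(\widetilde\partial_1 B_E)\,dx$ and $\int_D H_2(\widetilde\partial_2 B_E)\,dx$. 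Writing $H_2(u)=u^2-1$ and checking that the three additive constants cancel, this equals $\sqrt{2\pi^2E}\int_D\big(-\tfrac14 B_E^2+\tfrac{1}{16\pi^2E}\|\nabla B_E\|^2\big)\,dx$, where I used the definition \eqref{normalized} of the normalised derivatives. At this point I would invoke Green's first identity together with the Helmholtz equation $\Delta B_E=-4\pi^2E\,B_E$ to rewrite $\int_D\|\nabla B_E\|^2\,dx=\int_{\partial D}B_E\langle\nabla B_E,n\rangle\,dx+4\pi^2E\int_D B_E^2\,dx$; substituting this back makes the two bulk $B_E^2$ integrals cancel and leaves exactly the boundary integral \eqref{newsecondchaos}, with the stated constant $1/(8\pi\sqrt{2E})$.

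To obtain \eqref{var2L} I would expand $\Var(\L_E^{[2]})$ as a double line integral over $\partial D$ of the covariance of the integrand $g(x):=B_E(x)\langle\nabla B_E(x),n(x)\rangle$. Since $g$ is a product of two jointly Gaussian fields, Wick's formula expresses this covariance as a sum of two products of the kernels of Lemma \ref{lemsmooth}, namely $r^E$ times a second-order kernel plus a product of two first-order kernels. Inserting the uniform Bessel asymptotics \eqref{asymp_infinity}, each kernel along $\partial D$ carries the scale $1/\sqrt E$ and the decay factor $(\sqrt E\,\phi)^{-1/2}$; a near-diagonal band of width $\sim 1/\sqrt E$ together with the oscillatory cancellations away from the diagonal (handled exactly as in Remark \ref{cosines}) show that the double integral is $O(E)$, so that the prefactor $1/(128\pi^2E)$ yields $\Var(\L_E^{[2]})=O(1)$.

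Finally, for the complex case I would repeat the degree-two extraction in \eqref{expN} at $z=0$: collecting the coefficients $\beta_{i_1}(0)\beta_{j_1}(0)\,\zeta_{i_2,i_3,j_2,j_3}$ from \eqref{fewbeta} and \eqref{fewgamma} and comparing with the real computation shows that the second-chaos projection separates into the two independent copies built from $B_E$ and $\widehat B_E$, giving \eqref{newsecondchaosN}, $\NNN_E^{[2]}=\sqrt{2E}\,(\L_E^{[2]}+\widehat{\L}_E^{[2]})$. As $\L_E^{[2]}$ and $\widehat{\L}_E^{[2]}$ are independent and each of variance $O(1)$ by \eqref{var2L}, the bound \eqref{var2N} follows. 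The main obstacle I anticipate is the variance estimate of the previous paragraph: the delicate point is not the order of magnitude but showing that the oscillation of the Bessel kernels produces enough cancellation to prevent an extra $\log E$ factor from the tail of the diagonal integration, which is precisely where the identities \eqref{asymp_infinity} and the cosine-integration trick of Remark \ref{cosines} are needed.
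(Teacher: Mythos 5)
Your proposal is correct and follows essentially the same route as the proof this paper relies on: Lemma \ref{2_chaos_var} is imported without proof from \cite{NPR:19} (Lemmas 4.1 and 4.2), where the identities are obtained exactly as you do, namely by extracting the $q=2$ terms of \eqref{expL}--\eqref{expN} at $z=0$ (using $\beta_1(0)=0$ and the parity of the $\alpha$ and $\zeta$ coefficients), and converting $\int_D\|\nabla B_E\|^2\,dx$ into the boundary term via Green's identity and the Helmholtz equation $\Delta B_E=-4\pi^2E\,B_E$, the complex case then splitting into the two independent copies generated by $B_E$ and $\widehat B_E$. The only inessential point is your closing concern about oscillatory cancellation: it is not actually needed for \eqref{var2L}, since the crude bound coming from \eqref{asymp_infinity} (each covariance kernel, normalised by its natural power of $E$, decays like $(\sqrt{E}\,\|x-y\|)^{-1/2}$, so the Wick products are $\lesssim E\min\big(1,(\sqrt{E}\,\|x-y\|)^{-1}\big)$) already shows that the double integral over $\partial D\times\partial D$ is $O(\sqrt{E}\log E)$, whence the prefactor $1/(128\pi^2E)$ gives $\Var\big(\L_E^{[2]}\big)=O\big(\log E/\sqrt{E}\big)=o(1)$, comfortably within the stated $O(1)$.
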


\medskip

\begin{prop}[{\cite[Proposition 6.1]{NPR:19}}]\label{propvar4L} 
The fourth chaotic component of $\L_E$ is given by
\begin{equation}\label{L_4}
\L_E^{[4]}(D)=\frac{\sqrt{2\pi^2\,E}}{128}\{8\,a_{1,E}-a_{2,E}-a_{3,E}-2\,a_{4,E}-8\,a_{5,E}-8\,a_{6,E}\}\,,
\end{equation}
where 
\begin{equation}\label{a's}
\begin{split}
a_{1,E}&:=\int_{D} H_4(B_E(x))dx\,,\quad a_{2,E}:=\int_{D}  H_4(\widetilde{\partial}_1 B_E(x))dx\,,\quad
a_{3,E}:=\int_{D} H_4(\widetilde{\partial}_2 B_E(x))dx\,,\\
a_{4,E} &:= \int_{D}  H_2(\widetilde{\partial}_1 B_E(x))
H_2(\widetilde{\partial}_2 B_E(x))dx\,,\\
a_{5,E} &:= \int_{D}  H_2(B_E(x))
  H_2(\widetilde{\partial}_1 B_E(x))dx\,,\quad
a_{6,E} := \int_{D}  H_2(B_E(x))H_2(\widetilde{\partial}_2 B_E(x))dx\,.
\end{split}
\end{equation}
Its variance satisfies 
\begin{equation}\label{var4L}
\begin{split}
{\Var}(\L_E^{[4]}) &= \frac{\pi^2E}{8192}\,{\Var}\left (8a_{1,E}-a_{2,E}-a_{3,E}-2a_{4,E}-8a_{5,E}-8a_{6,E}
\right )\\
&\sim \frac{{\area}(D)\,\log E}{512\pi},
\end{split}
\end{equation}
where the last asymptotic equivalence holds as $E\to +\infty$. 
\end{prop}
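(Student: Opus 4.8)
The plan is to prove the two assertions of the Proposition separately: first the closed form \eqref{L_4} for the fourth chaotic projection, and then the variance asymptotics \eqref{var4L}.

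For the formula, I would start from the general chaotic expansion \eqref{expL} of $\L_E(z)$ and specialise it to $z=0$ and $q=4$. The only genuine input is a parity analysis of the coefficients: since $\beta_n(0)=\gamma(0)H_n(0)$ vanishes whenever $n$ is odd, and $\alpha_{m,u-m}$ vanishes by \eqref{coeff_alfa} whenever $m$ or $u-m$ is odd, the triple sum over $(u,m)$ in the $q=4$ layer collapses to the six admissible pairs $(u,m)\in\{(0,0),(2,0),(2,2),(4,0),(4,2),(4,4)\}$. Each such pair produces exactly one of the integrals $a_{1,E},\dots,a_{6,E}$ defined in \eqref{a's}. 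Substituting the explicit values of $\beta_0(0),\beta_2(0),\beta_4(0)$ from \eqref{fewbeta} and of $\alpha_{0,0},\alpha_{2,0},\alpha_{0,2},\alpha_{4,0},\alpha_{0,4},\alpha_{2,2}$ from \eqref{fewalpha}, computing the six resulting coefficients, and factoring out $\tfrac{1}{128}$ yields precisely \eqref{L_4}.

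For the variance, factoring the scalar $\tfrac{\sqrt{2\pi^2E}}{128}$ out of \eqref{L_4} gives $\Var(\L_E^{[4]})=\frac{2\pi^2E}{128^2}\,\Var\big(8a_{1,E}-a_{2,E}-a_{3,E}-2a_{4,E}-8a_{5,E}-8a_{6,E}\big)$, and $128^2=16384$ produces the prefactor $\pi^2E/8192$ announced in \eqref{var4L}. Expanding the variance bilinearly reduces the problem to the twenty-one covariances $\Cov(a_{i,E},a_{j,E})$, $1\le i\le j\le 6$. Each is handled by the diagram (Wick) formula, which rewrites it as an integral over $D\times D$ of a finite sum of products of four of the kernels $r^E,\widetilde r^E_{k,l}$; these are exactly the integrals covered by Proposition \ref{prop1} and Proposition \ref{prop2} in the diagonal case $D_1=D_2=D$ (so that $\area(D_1\cap D_2)=\area(D)$). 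Applying those propositions, together with the angular integrals $\int_0^{2\pi}\prod_{k,l}h^1_{k,l}(\theta)^{q_{k,l}}\,d\theta$ and the elementary identities for $\cos^4$ and $\cos^2\sin^2$, gives for each pair an asymptotic of the form $\Cov(a_{i,E},a_{j,E})\sim \area(D)\,\tfrac{m_{i,j}\,\log E}{\pi^3E}$ with explicit rational constants $m_{i,j}$; these coincide with the coefficients recorded in the proof of Theorem \ref{thm_L} specialised to $D_1=D_2=D$.

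The final step is to assemble the weighted sum $\sum_{i,j}c_ic_j\,m_{i,j}$ with $c=(8,-1,-1,-2,-8,-8)$, and this is where the main difficulty, and the genuine content of the statement, lies. Each individual covariance is of exact order $\log E/E$, and the large diagonal contributions, most notably the $64\cdot 9=576$ coming from $a_{1,E}$ and the $96+96$ coming from $a_{5,E},a_{6,E}$, must cancel almost entirely against the off-diagonal cross terms, leaving the net constant $16$. This is the analytic manifestation of Berry's cancellation phenomenon, and the only real risk is a bookkeeping error in the numerous angular integrals or in the signs of the cross terms. Once $\sum_{i,j}c_ic_j m_{i,j}=16$ is confirmed, combining it with the prefactor gives $\Var(\L_E^{[4]})\sim \frac{16\,\area(D)\,\log E}{8192\pi}=\frac{\area(D)\,\log E}{512\pi}$, as claimed.
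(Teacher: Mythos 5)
This proposition is imported verbatim from \cite[Proposition 6.1]{NPR:19}: the paper states it in Appendix \ref{appA} without proof, so the only benchmark is the route suggested by the paper's own machinery, which is precisely the one you follow. Your reconstruction is correct and non-circular: the parity argument isolates exactly the six pairs $(u,m)\in\{(0,0),(2,0),(2,2),(4,0),(4,2),(4,4)\}$, the coefficients $\beta_{4-u}(0)\,\alpha_{m,u-m}$ computed from \eqref{fewbeta}--\eqref{fewalpha} give the weights $(8,-1,-1,-2,-8,-8)/128$, and the covariance constants $m_{i,j}$ (which the paper derives from Propositions \ref{prop1}--\ref{prop2} independently of this proposition) do sum, with those weights, to the net value $16$, so that $\Var(\L_E^{[4]})\sim \frac{\pi^2E}{8192}\cdot 16\,\area(D)\frac{\log E}{\pi^3E}=\frac{\area(D)\log E}{512\pi}$ as claimed.
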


\medskip

\begin{prop}[{\cite[Proposition 6.2]{NPR:19}}]\label{propvar4N} 
The fourth chaotic component of $\NNN_E$ is given by
\begin{equation}\label{N_4}
\NNN_E^{[4]}(D)=a_{E}(D)+\widehat{a}_{E}(D)+b_{E}(D)\,,
\end{equation}
where
$$
a_E(D)=\frac{\pi\,E}{64}\{8\,a_{1,E}(D)-a_{2,E}(D)-2a_{3,E}(D)-8\,a_{4,E}(D)\}\,,
$$
$\widehat{a}_{E}(D)$ is defined in the same way as $a_{E}(D)$, except for the fact that one uses $\widehat{B}_E$ instead of $B_E$, and 
$$
b_E = \frac{\pi E}{8}\{ 2b_{1,E} - b_{2,E} - b_{3,E} -b_{4,E} - b_{5,E} - \frac14 b_{6,E} -\frac14 b_{7,E} + \frac54 b_{8,E} + \frac54 b_{9,E}- 3b_{10,E} \},
$$
with $a_{i,E}$, $i=1,\ldots,4$ defined in \eqref{a's} and 
\begin{equation}\label{b's}\begin{split}
&b_{1,E} := \int_{D}  
H_2(B_E(x))H_2(\widehat{B}_E(x))
dx \qquad b_{2,E}:=\int_{D} H_2(B_E(x)) H_2(\widetilde{\partial}_1 \widehat{B}_E(x)dx\\
&b_{3,E}=\int_{D} H_2(B_E(x)) H_2(\widetilde{\partial}_2 \widehat{B}_E(x))dx\qquad
b_{4,E}=\int_{D} H_2(\widetilde{\partial}_1 B_E(x))H_2(\widehat{B}_E(x)) dx\\
&b_{5,E}:=\int_{D} H_2(\widetilde{\partial}_2 B_E(x))H_2(\widehat{B}_E(x)) dx\qquad
b_{6,E} := \int_{D}  
H_2(\widetilde{\partial}_1 B_E(x))H_2(\widetilde{\partial}_1 \widehat{B}_E(x))
dx\\
&b_{7,E} := \int_{D}  
H_2(\widetilde{\partial}_2 B_E(x))H_2(\widetilde{\partial}_2 \widehat{B}_E(x))
dx\qquad
b_{8,E} := \int_{D}  
H_2(\widetilde{\partial}_1 B_E(x))H_2(\widetilde{\partial}_2 \widehat{B}_E(x))dx\\
&b_{9,E} := \int_{D}  
H_2(\widetilde{\partial}_2 B_E(x))H_2(\widetilde{\partial}_1 \widehat{B}_E(x))dx\\
&b_{10,E} := \int_{D}  \widetilde{\partial}_1 B_E(x)\widetilde{\partial}_2 B_E(x)
\widetilde{\partial}_1 \widehat{B}_E(x)\widetilde{\partial}_2 \widehat{B}_E(x)dx.
\end{split}\end{equation}
Its variance satisfies 
\begin{equation}\label{eqbella}
{\Var}(\NNN_E^{[4]})=2{\Var}(a_E)+{\Var}(b_E)\sim 
\frac{11{\area}(D)}{32\pi}\,E\log E,
\end{equation}
where the last asymptotic equivalence holds as $E\to +\infty$. 
\end{prop}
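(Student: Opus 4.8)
The plan is to extract $\NNN_E^{[4]}$ directly from the general chaotic expansion \eqref{expN} and then to compute its variance by means of the Bessel-asymptotic machinery of Propositions \ref{prop1}--\ref{prop2}. I would first specialise \eqref{expN} to $q=4$ and $z=0$. Since the odd-order coefficients $\beta_n(0)$ vanish and the indices $i_2,i_3,j_2,j_3$ are constrained to share the same parity, only finitely many tuples $(i_1,i_2,i_3,j_1,j_2,j_3)$ of total sum $4$ contribute. These fall into three groups: those with $(j_1,j_2,j_3)=(0,0,0)$ (depending on $B_E$ alone), those with $(i_1,i_2,i_3)=(0,0,0)$ (depending on $\widehat B_E$ alone), and the genuinely mixed ones, among which the unique all-odd tuple $(0,1,1,0,1,1)$. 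Inserting the explicit values $\beta_0(0),\beta_2(0),\beta_4(0)$ from \eqref{fewbeta} and the coefficients $\zeta_{i_2,i_3,j_2,j_3}$ from \eqref{fewgamma} into each surviving term, I would identify the three groups with $a_E(D)$, $\widehat a_E(D)$ and $b_E(D)$, thereby reading off both the prefactors $\pi E/64$ and $\pi E/8$ and the numerical coefficients standing in front of the $a_{i,E}$ of \eqref{a's} and the $b_{i,E}$ of \eqref{b's}.

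Next I would establish that $a_E$, $\widehat a_E$ and $b_E$ are pairwise uncorrelated. Since $\widehat B_E$ is an independent copy of $B_E$, and $a_E$ (resp. $\widehat a_E$) is a centered functional of $B_E$ (resp. $\widehat B_E$) alone, one has $\Cov(a_E,\widehat a_E)=0$. For the mixed terms I would use that at any fixed point the three variables $\widehat B_E$, $\widetilde\partial_1\widehat B_E$, $\widetilde\partial_2\widehat B_E$ are independent standard Gaussians, their on-diagonal covariances $\widetilde r^E_{0,i}(0)$ and $\widetilde r^E_{1,2}(0)$ all vanishing; consequently the $\widehat B_E$-expectation of every $b_{i,E}$-integrand factorises into $\prod_k\E[H_{j_k}(\cdot)]$ and vanishes whenever $(j_1,j_2,j_3)\neq(0,0,0)$, so that $\Cov(a_E,b_E)=0$, and symmetrically $\Cov(\widehat a_E,b_E)=0$. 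As $a_E$ and $\widehat a_E$ share the same law, this yields the orthogonal decomposition $\Var(\NNN_E^{[4]})=2\Var(a_E)+\Var(b_E)$ of \eqref{eqbella}.

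It then remains to compute the two variances to leading order. For $\Var(a_E)$ I would expand into the covariances $\Cov(a_{i,E},a_{j,E})$, which are precisely those appearing in the real case (Proposition \ref{propvar4L}): via the diagram formula \cite[Proposition 4.15]{MP:11} each becomes a double integral over $D\times D$ of a product of four kernels drawn from $\{r^E,\widetilde r^E_{0,i},\widetilde r^E_{i,i},\widetilde r^E_{1,2}\}$, and the radial rescaling $\phi=\psi/\sqrt E$ together with \eqref{asymp_infinity}--\eqref{asymp_zero} gives $\Cov(a_{i,E},a_{j,E})\sim c_{ij}\,\area(D)\,(\log E)/E$; multiplying by $(\pi E/64)^2$ produces a contribution of order $\area(D)\,E\log E$. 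For $\Var(b_E)$ I would apply the same reduction to the $\binom{10}{2}+10$ covariances $\Cov(b_{i,E},b_{j,E})$, discarding the oscillatory radial integrals exactly as in Remark \ref{cosines}, and then multiply by $(\pi E/8)^2$. Summing the leading coefficients and tracking the constants produces $2\Var(a_E)+\Var(b_E)\sim \tfrac{11}{32\pi}\area(D)\,E\log E$.

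The main obstacle is this last step, although the difficulty is computational rather than conceptual: the constant $11/(32\pi)$ only emerges after every pairwise covariance $\Cov(b_{i,E},b_{j,E})$ has been reduced, through the diagram formula, from a product of six Hermite integrals to a sum of products of four Bessel kernels, and after each purely oscillatory radial integral has been shown to be $O(1/E)$ (hence negligible against $(\log E)/E$) by the integration-by-parts estimate of Remark \ref{cosines}. The bookkeeping is heavy but mechanical; the single point requiring genuine care is to check, for each surviving term, that the angular factor $\int_0^{2\pi}\prod_{i,j}h^1_{i,j}(\theta)^{q_{i,j}}\,d\theta$ does not vanish, so that the $(\log E)/E$ rate is actually attained and the prefactor accumulates to the claimed value.
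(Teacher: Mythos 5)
Your proposal is correct and follows essentially the same route as the source: this paper does not itself prove Proposition \ref{propvar4N} (it is imported from \cite{NPR:19}), but the proof there is precisely what you outline --- specialising \eqref{expN} to $q=4$, $z=0$ with the coefficients \eqref{fewbeta} and \eqref{fewgamma}, using the independence of $B_E$ and $\widehat B_E$ together with the vanishing at the origin of $\widetilde r^E_{0,i}$ and $\widetilde r^E_{1,2}$ to obtain the orthogonal splitting $\Var(\NNN_E^{[4]})=2\Var(a_E)+\Var(b_E)$, and then the diagram formula combined with the Bessel asymptotics \eqref{asymp_infinity}--\eqref{asymp_zero} (with the oscillatory integrals discarded as in Remark \ref{cosines}) to extract the constant $11/(32\pi)$, exactly the covariance bookkeeping that this paper's main text carries out for general pairs $D_1,D_2$. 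One small caveat: carrying out your first step in the indexing of \eqref{a's} yields $a_E=\frac{\pi E}{64}\{8\,a_{1,E}-a_{2,E}-a_{3,E}-2\,a_{4,E}-8\,a_{5,E}-8\,a_{6,E}\}$, which matches the four-term formula displayed in the Proposition only after the lumped re-indexing of the $a_{i,E}$ used in \cite{NPR:19}; this is a transcription quirk of the statement, not a gap in your argument.
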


\subsection*{Acknowledgments}

The authors would like to thank Maurizia Rossi for insightful discussions. AV would like also to thank Guangqu Zheng for some useful comments on an early version of this work. Part of this work has been written in the framework of AFR research project \emph{High-dimensional challenges and non-polynomial transformations in probabilistic approximations} (HIGH-NPOL) funded by FNR -- Luxembourg National Research Fund. Giovanni Peccati
is also supported by the FNR grant FoRGES (R-AGR-3376-10) at Luxembourg University.

\bibliographystyle{alpha}
\bibliography{Bibliography}

\end{document}